\numberwithin{theorem}{section}
\numberwithin{equation}{section}
\numberwithin{figure}{section}
\newcommand{\boxop}{\ensuremath{\text{\qedsymbol}}} 
\newcommand{\geom}{\ensuremath{{\bm{\mathsf{G}}}}} 
\newcommand{\diff}{\ensuremath{{\bm{\mathsf{D}}}}} 
\newcommand\ZZ{\mathbb{Z}}
\DeclareMathOperator{\arccosh}{arccosh} 
\DeclareMathOperator{\Dom}{Dom} 
\DeclareMathOperator{\Diff}{Diff} 
\DeclareMathOperator{\Tr}{Tr} 
\newcommand{\bfW}{\ensuremath{\mathbf{W}}} 
\newcommand{\calD}{\ensuremath{\mathcal{D}}} 
\newcommand{\calU}{\ensuremath{\mathcal{U}}} 
\newcommand{\upc}{\ensuremath{\mathrm{c}}} 
\newcommand{\bmp}{\ensuremath{\bm{p}}} 
\newcommand{\bmq}{\ensuremath{\bm{q}}} 
\newcommand{\bmA}{\ensuremath{\bm{A}}} 
\newcommand{\bmE}{\ensuremath{\bm{E}}} 
\newcommand{\bmU}{\ensuremath{\bm{U}}} 
\newcommand{\bmLambda}{{\ensuremath{\bm{\Lambda}}}} 
\newcommand{\wbar}{{\ensuremath{\overline{w}}}} 
\newcommand\RR{\mathbb{R}}
\title[Wave propagation on an ESCS. I]{Wave propagation on Euclidean
  surfaces with conical singularities. I: Geometric diffraction.}
\author[G.A.~Ford]{G.~Austin Ford} \address{Department of Mathematics, Stanford
  University, Stanford, California 94305, USA}
\email{austin.ford@math.stanford.edu}
\author[A.~Hassell]{Andrew Hassell} \address{Mathematical Sciences Institute,
  Australian National University, Canberra 0200 ACT Australia}
\email{hassell@maths.anu.edu.au}
\author[L.~Hillairet]{Luc Hillairet} \address{Universit\'e d'Orl\'eans, UFR
  Sciences, B\^atiment de Math\'ematiques, 45067 Orl\'eans, France}
\email{luc.hillairet@univ-orleans.fr}
\thanks{This research was supported by National Science Foundation Postdoctoral Fellowship DMS-1204304 (GAF);
 Australian Research Council Discovery grants FT0990895, DP1095448 and DP120102019 (AH); and L'Agence Nationale de la Recherche program ANR-13-BS01-0007-01 GERASIC (LH).  GAF also thanks the Mathematical Sciences Institute of the Australian National University for its hospitality during part of this work.}
\subjclass[2010]{35L05, 35S30, 58J50}
\keywords{Euclidean surfaces with conical singularities, wave equation, wave trace, diffraction, singular diffractive orbits, intersecting Lagrangian distributions}
\begin{document}




\begin{abstract}
  We investigate the singularities of the trace of the half-wave group, $\Tr e^{-it\sqrt\Delta}$, on Euclidean surfaces with conical singularities $(X,g)$.  We compute the leading-order singularity associated to periodic orbits with successive degenerate diffractions.  This result extends the previous work of the third author \cite{Hil} and the two-dimensional case of the work of the first author and Wunsch \cite{ForWun} as well as the seminal result of Duistermaat and Guillemin \cite{DuiGui} in the smooth setting.  As an intermediate step, we identify the wave propagators on $X$ as singular Fourier integral operators associated to intersecting Lagrangian submanifolds, originally developed by Melrose and Uhlmann \cite{MelUhl}.
\end{abstract}

\maketitle

\tableofcontents


\setcounter{section}{-1}


\section{Introduction}
\label{sec:introduction}

In this article, we investigate the spectral geometry of Euclidean surfaces with conical singularities $(X,g)$. We determine the precise microlocal structure of the half-wave propagator, $e^{-it\sqrt{\Delta}}$, near a ray that undergoes one or two degenerate diffractions. Using this, we compute the leading-order singularity of the trace of the half-wave group, $\Tr e^{- it \sqrt\Delta}$, associated to an isolated periodic orbit undergoing two degenerate diffractions through cone points.  For example, if the periodic orbit has length $L$ and undergoes degenerate diffractions through two cone points at a distance $b$ apart, we show that the associated wave trace singularity is
\begin{equation}
  \frac{1}{4i\pi^2} \cdot \sqrt{b (L - b)} \cdot (t - L - i0)^{-1} .
  \label{2dd}
\end{equation}

\subsection{Background}
Spectral geometry typically aims at understanding the relations between the
spectrum of the Laplace operator on a Riemannian manifold and the geometry of
the associated geodesic flow. These relations may be revealed by the study of
wave propagation. For instance, the Poisson relation states that the trace of
the wave propagator is smooth except possibly at the lengths of periodic
orbits. Moreover, in a generic and smooth situation, the singularity that is
brought to the wave trace by a particular periodic orbit can be fully understood
and leads to the definition of the so-called {\em wave-invariants} (see
\cite{DuiGui}). These wave-invariants may then be used for instance in inverse
spectral problems. They also serve as a particular motivation to study
wave propagation on different kind of singular surfaces. We will focus on
Euclidean surfaces with conical singularities since this general setting
includes polygonal billiards and translation surfaces, both of which are very
interesting and natural.

The basic new feature of wave propagation on singular manifolds is the dichotomy
between waves that hit the singularity---that are then diffracted in all
possible directions---and waves that miss the singularity and propagate
according to the usual laws for smooth manifolds. This fact leads to the
definition of the so-called {\em geometric (or direct) front} that consists of
rays that miss the vertex and the {\em diffracted front} that consists of rays
that hit the vertex and are reemitted in all possible directions. On a two dimensional  cone,
these two fronts share two rays in common that correspond to the limit of rays
that nearly miss the cone point from one side or the other. In the literature,
these two rays are called either ``geometrically diffractive'' \cite{MelWun} or
``singular diffractive'' \cite{Hil}. We will use here the former terminology.
On a compact surface with conical singularities the situation becomes quickly
complicated for a diffractive ray may hit successive conical points and
experience new diffractions that may be singular and so on. These diffractive
phenomena are established in the quite abundant literature on wave propagation
on singular manifolds starting with Sommerfeld's result for Euclidean sectors or
cones \cite{Som}. Among the important milestones of this story are the studies
by Cheeger and Taylor for cones of exact product-type \cites{CheTay1,CheTay2}
and by Melrose and Wunsch in the general case \cite{MelWun}.

Over the years, there has been investigation of the impact of diffraction on the
wave-trace.  For instance, Wunsch showed in \cite{Wun} that singularities may
appear at length of periodic diffractive orbits.  For some periodic diffractive
geodesics, the leading singularity is then computed in \cite{Hil} in the
Euclidean case and in \cite{ForWun} in a more general case (see also
\cite{BogoPavSch} for related results from a physics perspective). Both these
results are built upon a precise description of the wave propagator that is
microlocalized in the vicinity of given periodic (possibly diffractive)
geodesic. However, none of these studies attempted to determine the precise microlocal nature of the propagator near a geometrically diffractive ray: in  \cite{ForWun}, it is assumed that no geometric diffraction occurs (with a
non-focusing assumption that would be automatically satisfied in our case), while in
\cite{Hil}, it is assumed that the periodic geodesic has at most one geometric
diffraction.
The main purpose of the present paper is to fill this gap, i.e., to give a precise microlocal description of the wave
propagator near the geometric diffractive rays, on an ESCS. More precisely, we
will identify the microlocalized propagator near a ray that undergoes one or two geometric diffractions as
an element of the Melrose-Uhlmann class of singular Fourier Integral Operators
(\cite{MelUhl}), associated to either two, or four, Lagrangian submanifolds.
One advantage of this identification is the ease of computing wave trace singularities,
such as \eqref{2dd}, using standard methods such as stationary phase.


This is the first article in a planned series of three.  In the second paper, we will show how to compute wave traces for any closed orbit on an ESCS (with any number of geometric diffractions). In the third paper, we will apply our results to inverse spectral results, specifically isospectral compactness in the class of ESCSs. To keep the length of the present paper within reasonable bounds, we restrict our attention here to at most two geometric diffractions.


\subsection{Cones and ESCSs}
The Euclidean cone of cone angle $\alpha > 0$ is the product manifold
$C_\alpha \defeq \left (\bbR_+ \right)_r \times \left( \bbR / \alpha \bbZ
\right)_\theta$ equipped with the exact warped product metric
\begin{equation*}
  ds^2 \defeq dr^2 + r^2 \, d\theta^2 .
\end{equation*}
The vertex of the cone $\bmp$ is the point where all $(0,\theta)$ are
identified, and we will denote by
$C_\alpha^\circ \defeq C_\alpha \setminus \{ \bmp \}$ the cone without its
vertex. Let us recall that the Euclidean distance on $C_\alpha$ between two
points $\bmq_1=(r_1,\theta_1)$ and $\bmq_2=(r_2,\theta_2)$ in polar coordinates
is:
\begin{equation}
  \begin{cases}
    \mbox{dist}(\bmp,\bmq_1) = r_1, & \\
    \mbox{dist}(\bmq_1,\bmq_2) = r_1+r_2, & |\theta_2 - \theta_2| > \pi \\
    \mbox{dist}(\bmq_1,\bmq_2) =
    \sqrt{r_1^2+r_2^2-2r_1r_2\cos(\theta_2-\theta_1)}, &
    |\theta_2-\theta_1| \leqslant \pi.\\
  \end{cases}
\end{equation}

A Euclidean surface $X$ with conical singularities (denoted by ESCS in the sequel)
 is a singular Riemannian surface such that any point has a neighbourhood
that is isometric either to a Euclidean ball in $\bbR^2$ or to a ball centered
at the vertex of some Euclidean cone $C_\alpha$.
\begin{example}
  From any polygonal domain $\Omega$ in the plane we may generate an ESCS by
  taking two copies of the polygon, reflecting one of these copies across the
  $y$-axis, and identifying the corresponding sides.  Starting from a square, we
  build in this way a surface that is topologically a sphere that is flat with
  four singularities of angle $\pi.$
\end{example}
\begin{example}
  More generally, a surface that is obtained by gluing Euclidean polygons along
  their sides also is Euclidean with conical singularities. The surface of a
  cube is a ESCS that is topologically a sphere with $8$ singularities of angle
  $\frac{3\pi}{2}.$
\end{example}

Let $X$ be a Euclidean surface with conical singularities, and let $\vec{P}$ be
the set of its conical points.  Define $X^\circ \defeq X \setminus \vec{P}$.
Let $u$ be a smooth function that vanishes near the conical points.  Using the
Euclidean metric, one defines the gradient of $u$, $\nabla u$, and
the action of the Laplacian on $u$, $\Delta u$, as usual.  The Laplace operator
thus defined is not essentially self-adjoint. Among the possible
self-adjoint extensions, the most natural one is the Friedrichs extension that
is associated with the Dirichlet energy quadratic form
\begin{equation*}
  Q(u) \defeq \int_{X} \left | \nabla u\right |^2 dS, \quad u \in C_c^\infty(X^\circ),
\end{equation*}
where $dS$ is the Euclidean area measure.  Throughout the paper $\Delta$ will
always define the Friedrichs extension of the Euclidean Laplace operator. By
choice it is a non-negative operator.

Writing $\boxop = D_t^2 - \Delta$ with $D_t = \frac{1}{i} \, \del_t,$ the
associated wave operator is then defined as
\begin{equation}
  \label{eq:wave-eqn}
  \left\{
    \begin{aligned}
      &\boxop_g u(t,x) = 0 \\
      &u(0,x) = u_0(x) \\
      &\del_tu(0,x) = \dot{u}_0(x)
    \end{aligned}
  \right.
\end{equation}
We will always take $t \geqslant 0$.  The wave propagators that are associated
with this wave equation may be defined through functional calculus and we denote
them by:
\begin{equation}
  \mathbf{W}(t) \defeq \frac{\sin\!\left( t \sqrt{ \smash[b]{\Delta} }
    \right)}{ \sqrt{\smash[b]{\Delta}} } \qquad \text{and} \qquad
  \dot{\mathbf{W}}(t) \defeq \cos\!\left( t \sqrt{ \smash[b]{\Delta} } \right).
\end{equation}
We will also use the half-wave propagator
$\calU(t) \defeq \exp(-it\sqrt{\Delta}).$

Since singularities of solutions to the wave equation propagate with finite
speed, the propagator $\mathbf{W}(t)$ can be understood by patching together
local propagators that are defined either on the plane or on $C_\alpha.$ As a
first step it is therefore crucial to understand wave propagation on the flat
cone $C_\alpha$.

\subsection{The wave kernel on cones}

It turns out that the wave kernel on $C_\alpha$ is explicitly known (see
\cites{Som, CheTay1, CheTay2, Fri} for different ways of constructing this
kernel --- we describe these briefly  at the beginning of Sections~\ref{sec:micro-structure} and~\ref{sec:micro-structure-alpha}).  Propagation of singularities for the wave equation on $C_\alpha$ is
then described as follows. Using polar coordinates, we define on
$(0,\infty)\times T^*C_\alpha^\circ \times T^*C_\alpha^\circ$ two Lagrangian submanifolds $\Lambda^\geom$ and $\Lambda^\diff$. For $\alpha > \pi$, these can be defined as follows.
\begin{subequations}
  The geometric (or ``main'') Lagrangian is
  \begin{equation}
    \label{introeq:geom-Lagrangian-relation}
    \Lambda^\geom \defeq N^* \! \left\{ t^2 = r_1^2 + r_2^2 - 2 r_1 r_2
      \cos(\theta_1 - \theta_2) \text{ and } |\theta_1 - \theta_2| \leqslant \pi
    \right\},
  \end{equation}
  the diffractive Lagrangian is
  \begin{equation}
    \label{introeq:diff-Lagrangian-relation}
    \Lambda^\diff \defeq N^* \! \left\{ t^2 = \left(r_1 + r_2\right)^2 \right\} ,
  \end{equation}
  and their intersection is the singular set
  \begin{equation}
    \label{introeq:singular-set}
    \Sigma \defeq \Lambda^\geom \cap \Lambda^\diff = \Lambda^\diff \cap \{ |\theta_1 - \theta_2| = \pi \}.
  \end{equation}
\end{subequations}
In the case $\alpha \leq \pi$, we choose an integer $N$ such that $\alpha N > \pi$. Then we consider the $N$-fold covering map from $C^\circ_{N\alpha}$ to $C^\circ_\alpha$ induced by the natural map $\RR /N\alpha \ZZ \to \RR /\alpha \ZZ$.
As this is a local isometry, this induces a covering map $T^* C^\circ_{N\alpha} \to T^* C^\circ_\alpha$. We define $\Lambda^\geom_\alpha$ to be the image of $\Lambda^\geom_{N\alpha}$ under this covering map.

The terminology indicates that $\Lambda^\geom$ corresponds to geometric, or non-diffractive
geodesics (i.e., geodesics on $C_\alpha$ that avoid $\bmp$) which carry the main
singularity whereas $\Lambda^\diff$ corresponds to diffractive geodesics (i.e.,
concatenation of two rays emanating from $\bmp$.) The singular set thus
corresponds to diffractive geodesics that are limits of non-diffractive ones. We
will refer to these as {\em geometrically diffractive}.  We will denote by
$\Lambda^{\geom/\diff}_\pm$ the Lagrangian submanifolds obtained by restricting
$\Lambda^{\geom/\diff}$ to $\mp\tau>0$ where $\tau$ is the dual variable to $t.$

The explicit expression of the propagator implies, first, that singularities
propagate according to $\Lambda^\geom \cup \Lambda^\diff$, and second, that away
from the intersection $\Sigma$ the propagator is a classical Fourier Integral
Operator (FIO). Away from the intersection $\Sigma$, the kernel of the half-wave
propagator $e^{-it\sqrt{\Delta}}$ is given by the so-called {\em Geometric
  Theory of Diffraction} (see Appendix \ref{sec:GTD}).

 \subsection{Main results}

Our first result is a precise description of the kernel of the wave propagator
on the cone $C_\alpha$ near the singular set $\Sigma.$ It is actually a bit
simpler to describe the result for the half-wave propagator
$e^{-it\sqrt{\Delta}}$, whose Schwartz kernel we denote by $\bmU_\alpha.$

We observe that $(t^*,\bmq_1^*,\bmq_2^*)$ is in the projection of $\Sigma$ on
$(0,\infty)\times C_\alpha^\circ \times C_\alpha^\circ$ if and only if, in polar
coordinates, we have $r_1^*+r_2^*=t^*$ and
$\theta_1^*-\theta_2^* = \epsilon\pi,\,\epsilon = \pm 1.$ Let $\gamma$ be the
parametrization by arclength of the geometrically diffractive geodesic that
joins $\bmq_1^*$ to $\bmq_2^*$ normalized in such a way that
$\gamma(-r_2^*)=\bmq_2,~\gamma(0)=\bmp,~\gamma(r_1^*)=\bmq_1.$ Since the cone is
flat, $\gamma$ can be extended to a local isometry $\mathcal{I}_\epsilon$ that is defined
on $\bbR^2\setminus \{(0,\epsilon y),~ y>0,\,\epsilon = \pm 1\, \}.$ Using
$\mathcal{I}_\epsilon$ we can thus parametrize a neighbourhood of $(\bmq_1^*,\bmq_2^*)$ in
$C_\alpha^\circ\times C_\alpha^\circ$ by the product of two Euclidean balls in
$\bbR^2$ the first one centered at $(r_1^*,0)$ and the second one at
$(-r_2^*,0)$ (in Euclidean coordinates).

\begin{theorem}\label{thm:intro-kernel}
  Let $\bmq_1^*$ and $\bmq_2^*$ be the extremities of a geometrically
  diffractive geodesic of length $t^*$ and diffraction angle $\epsilon \pi$
  ($\epsilon = \pm 1).$ Locally near $(t^*,\bmq_1^*,\bmq_2^*)$ in
  $(0,\infty)\times C_\alpha^\circ\times C_\alpha^\circ$ the kernel
  $\bmU_\alpha$ can be written as the following oscillatory integral:
  \begin{equation}
  \bmU_\alpha(t,\bmq_1,\bmq_2)= (2\pi)^{-2} \int_{s\geq 0} \int_{\omega>0} %
  e^{i\phi_{\epsilon}(t,\bmq_1,\bmq_2,s,\omega)}
  a_{\alpha,\epsilon}(t,\bmq_1,\bmq_2,s,\omega)\, d\omega ds
  \label{Uexpr}\end{equation}
  where (using $\mathcal{I}_\epsilon$ for parametrization---i.e.,
  $g_\epsilon(x_1,y_1)=\bmq_1,~g_\epsilon(x_2,y_2)=\bmq_2$)
  \begin{enumerate}
  \item the phase $\phi_\epsilon$ is defined by
    \[
    \phi_{\epsilon}(t,\bmq_1,\bmq_2,s,\omega)=\omega\left [
      \sqrt{x_1^2+(y_1+s\epsilon)^2}+\sqrt{{x_2}^2+({y_2}+s\epsilon)^2}-t\right],
    \]
  \item the amplitude $a_{\alpha,\epsilon}$ is a classical symbol that is smooth
    in $(t,\bmq_1,\bmq_2,s)$ and of order $1$ in $\omega$ so that we have
    \[
    a_{\alpha,\epsilon}\sim \sum_{k\geq 0}
    a_{\alpha,\epsilon,1-k}(t,\bmq_1,\bmq_2,s) \, \omega^{1-k}.
    \]
  \item In polar coordinates, we have at leading order
    \[
    a_{\alpha,\epsilon}(\bmq_1,\,\bmq_2\,,\,s=0,\omega)
    =-2\pi i\epsilon \cdot \frac{S_\alpha(\theta_1-\theta_2)}{
      (r_1r_2)^{\frac{1}{2}}}\cdot \Big [\sin \theta_1+\sin \theta_2\Big
    ]\cdot \omega.
    \]
    where $S_\alpha$ is the (absolute) scattering matrix for the cone $C_\alpha$. An explicit expression for $S_\alpha$ is given by  \eqref{Salpha-expr}.
  \end{enumerate}
\end{theorem}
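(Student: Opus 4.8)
The plan is to start from an explicit representation of the half-wave kernel $\bmU_\alpha$ on the cone. The standard choice is the Sommerfeld–Carslaw / Cheeger–Taylor contour-integral formula, which expresses $\bmU_\alpha(t,\bmq_1,\bmq_2)$ as a sum of a ``geometric'' term (built from the free Euclidean half-wave kernel pulled back by the local isometry, valid when $|\theta_1-\theta_2|\le\pi$) and a ``diffractive'' term given by a contour integral involving the Sommerfeld kernel. Away from $\Sigma$ these two pieces are honest Lagrangian distributions, but near a geometrically diffractive geodesic they must be combined, and the combined object is an intersecting-Lagrangian (Melrose–Uhlmann) distribution. The first step is therefore to rewrite the diffractive term of $\bmU_\alpha$ using the explicit form of the scattering matrix $S_\alpha$ (formula \eqref{Salpha-expr}), isolating the kernel of $\sqrt{\Delta}\,\mathbf{W}(t)$ on the cone in the region $r_1+r_2\approx t$, $\theta_1-\theta_2\approx\epsilon\pi$.

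Next I would introduce the extra fibre variable $s\ge 0$ that produces the intersecting-Lagrangian structure. Geometrically, $s$ measures how far along the limiting geodesic one slides the apparent diffraction point: the phase $\phi_\epsilon$ is precisely (distance from $\bmq_1$ to the point $s$ units past the vertex) + (distance from $\bmq_2$ to that same point) $- t$, so that $\partial_s\phi_\epsilon=0$ recovers the diffractive relation $r_1+r_2=t$ while the boundary $s=0$ picks out the geometric relation. Concretely, I would take the Sommerfeld integral representation of the diffracted wave, change variables so that the contour parameter becomes (a function of) $s$, and Fourier-transform in the radial frequency to introduce $\omega>0$; this is the standard trick (used in \cite{MelUhl}, \cite{MelWun}) of writing a conormal-to-conormal transition as $\int_0^\infty\int_0^\infty e^{i\omega(\Phi(s)-t)}a\,d\omega\,ds$. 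One must check that after this manipulation the resulting amplitude is a classical symbol of order $1$ in $\omega$, smooth and compactly supported (after a partition of unity) in $(t,\bmq_1,\bmq_2,s)$, which is item (2). The order $1$ is dictated by the dimension count: in two dimensions the half-wave kernel is a Lagrangian distribution of order $-1/2$ with respect to a Lagrangian of dimension $4$, and the extra $s$-integration against an order-$1$ symbol reproduces exactly this.

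The last step is to compute the leading symbol at $s=0$, item (3), and match it against the known Geometric Theory of Diffraction form of the diffracted wave (Appendix \ref{sec:GTD}). The GTD amplitude of the diffracted half-wave on the cone carries the factor $S_\alpha(\theta_1-\theta_2)(r_1r_2)^{-1/2}$; the additional factor $\sin\theta_1+\sin\theta_2$ and the constant $-2\pi i\epsilon$ come from the Jacobian of the change of variables from the Sommerfeld contour parameter to $(s,\omega)$ evaluated at $s=0$, together with the normalisation of the oscillatory integral. In practice I would evaluate $\partial_s\phi_\epsilon$ and $\partial_s^2\phi_\epsilon$ at $s=0$ in the Euclidean coordinates fixed by $\mathcal{I}_\epsilon$ (where $\partial_s\phi_\epsilon|_{s=0}=\omega(\cos\text{-angles})$ and the relevant transversal Jacobian produces $\sin\theta_1+\sin\theta_2$), then read off the amplitude by comparing stationary-phase expansions.

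The main obstacle I anticipate is the bookkeeping in the transition region: one has to verify that the single oscillatory integral \eqref{Uexpr} genuinely reproduces \emph{both} the geometric piece (via the $s=0$ boundary contribution) and the diffractive piece (via the interior stationary point in $s$) with the correct symbols and with no spurious singularities — i.e., that the Melrose–Uhlmann iterated-regularity/product-type structure holds uniformly across $\Sigma$. Equivalently, the hard technical point is controlling the behaviour of the Sommerfeld contour integral as $\theta_1-\theta_2\to\epsilon\pi$: the naive diffractive amplitude $S_\alpha(\theta_1-\theta_2)$ has a pole there, and it is exactly this pole, resolved by the $s$-integration, that is responsible for the intersecting-Lagrangian nature of the kernel. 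Making that resolution explicit — deforming the contour, extracting the pole, and recognising the remainder as the claimed symbol — is where the real work lies.
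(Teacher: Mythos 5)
Your route---working directly from the Sommerfeld/Cheeger--Taylor contour representation and massaging it into the form \eqref{Uexpr}---is exactly the ``brute computation'' that the paper mentions as conceivable but deliberately does not attempt. The paper instead proves membership in the Melrose--Uhlmann class on $C_{4\pi}$ by the moving-conical-point method (where $\partial_s$ of the shifted kernel is computed exactly through the commutator $[\vec X,\Delta]$, a rank-two operator supported at the vertex, yielding a purely diffractive $\delta(t-r_1-r_2)$ term), transfers this to arbitrary $\alpha$ via Friedlander's periodized representation, which shows $\bmE_{\alpha_1}-\bmE_{\alpha_2}$ is purely diffractive and one order smoother near $\Sigma$ (Proposition~\ref{thm:ml-structure-sing-set-ind}), and then obtains items (1)--(2) abstractly from Corollary~\ref{cor:UisMU} plus the parametrization theory of Section~\ref{sec:int-Lagn-distns}, with item (3) fixed by matching against the GTD formula of Appendix~\ref{sec:GTD}. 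In your plan, the analogue of this entire package---recasting the contour integral as $\int_0^\infty\!\int_0^\infty e^{i\phi_\epsilon}a\,d\omega\,ds$ with $a$ a classical symbol \emph{uniformly across} the pole of $S_\alpha$ at $\theta_1-\theta_2=\epsilon\pi$---is precisely the step you defer as ``where the real work lies.'' As written, nothing in the proposal constructs $a_{\alpha,\epsilon}$ or verifies the symbol property near $\Sigma$, and that verification is the whole content of the theorem; so this is a program rather than a proof, even granting that the program is plausible.

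There is also a structural error that would derail the computation if carried out literally: you assign $s=0$ to the geometric relation and the interior stationary point in $s$ to the diffractive relation, but it is the other way around. At $s=0$ the two square roots in $\phi_\epsilon$ are exactly $r_1$ and $r_2$, so $\phi_\epsilon|_{s=0}$ parametrizes $\Lambda^\diff=N^*\{t=r_1+r_2\}$, while the interior critical point in $s$ (the shifted vertex lying on the segment joining the images of $\bmq_1$ and $\bmq_2$) reproduces $|\bmq_1-\bmq_2|=t$, i.e.\ $\Lambda^\geom$. Relatedly, $s$ is not a slide of the diffraction point along the limiting geodesic: moving the vertex along that geodesic leaves $|\bmq_1-\bmp|+|\bmq_2-\bmp|$ constant for the endpoints on the geodesic, giving an $s$-independent (degenerate) phase there; in $\phi_\epsilon$ the vertex is displaced transversally to the geodesic, away from the cut, which is what produces $\partial_s\phi_\epsilon|_{s=0}=\epsilon\,(\sin\theta_1+\sin\theta_2)\,\omega$ (sines, not cosines). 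This matters concretely for item (3): the identification \eqref{eq:as=0} comes from integrating by parts in $s$ on the diffracted front away from $\Sigma$, equating the $s=0$ boundary term divided by $i\,\partial_s\phi_\epsilon$ with the GTD amplitude of Proposition~\ref{prop:hwkgtd}; with the roles of the two fronts interchanged one would match $a|_{s=0}$ against the free geometric symbol instead, and neither the factor $S_\alpha(\theta_1-\theta_2)$ nor the factors $\epsilon$ and $\sin\theta_1+\sin\theta_2$ in the stated formula would emerge.
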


From this expression we deduce the following corollary.
\begin{theorem}
  The half-wave propagator $\calU_\alpha(t)$ on the Euclidean
  cone $C_\alpha$ is in the Melrose-Uhlmann class
  $I^{m}(\Lambda^\diff_+,\Lambda^\geom_+)$ of singular Fourier Integral
  operators.  The order $m$ is equal to $0$ if $t$ is regarded as a parameter, or $-1/4$ if $t$ is regarded
  as a `spatial' variable.
   Similarly, the sine propagator $\bfW(t)$ on
  the Euclidean cone $C_\alpha$ is in the Melrose-Uhlmann class
  $I^{m-1}(\Lambda^\diff,\Lambda^\geom)$ of singular Fourier Integral
  operators.
\end{theorem}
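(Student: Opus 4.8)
The plan is to recognise the oscillatory integral \eqref{Uexpr} of Theorem~\ref{thm:intro-kernel} as a Melrose--Uhlmann normal form for the pair $(\Lambda^\diff_+,\Lambda^\geom_+)$, to read off the orders, to patch this with the (already known) ordinary FIO behaviour away from $\Sigma$, and finally to deduce the statement for $\bfW(t)$ by an elementary manipulation. Recall from \cite{MelUhl} that, microlocally near a point of a clean intersection $\Lambda_0\cap\Lambda_1$ with $\Lambda_0\cap\Lambda_1$ of codimension one in each, a kernel lies in $I^{m}(\Lambda_0,\Lambda_1)$ if and only if --- modulo $C^\infty$ and modulo ordinary Lagrangian distributions microsupported off $\Lambda_0\cap\Lambda_1$ --- it is a locally finite sum of oscillatory integrals
\[
  \int_0^\infty\!\!\int e^{i\Phi(z,s,\vartheta)}\,a(z,s,\vartheta)\,d\vartheta\,ds
\]
in which $\vartheta\mapsto\Phi(z,0,\vartheta)$ is a nondegenerate phase function parametrising $\Lambda_1$, the map $(s,\vartheta)\mapsto\Phi(z,s,\vartheta)$ (with $s$ treated as an unrestricted real variable) is a nondegenerate phase function parametrising $\Lambda_0$ whose critical set meets $\{s=0\}$ transversally along a diffeomorphic copy of $\Lambda_0\cap\Lambda_1$, and $a$ is a classical symbol of the order fixing $m$. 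As geometric input one first checks, from \eqref{introeq:geom-Lagrangian-relation}--\eqref{introeq:singular-set}, that $\Lambda^\diff_+$ and $\Lambda^\geom_+$ are smooth conic Lagrangians in $T^*\!\big((0,\infty)\times C_\alpha^\circ\times C_\alpha^\circ\big)$ meeting cleanly along $\Sigma$ with $\Sigma$ of codimension one in each; this amounts to the observation that the base hypersurfaces $\{t=r_1+r_2\}$ and $\{t^2=r_1^2+r_2^2-2r_1r_2\cos(\theta_1-\theta_2)\}$ are tangent to exactly first order precisely along $\{|\theta_1-\theta_2|=\pi\}$.

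The heart of the matter is to verify that $\phi_\epsilon$ is a Melrose--Uhlmann phase for $(\Lambda^\diff_+,\Lambda^\geom_+)$, with $\vartheta=\omega$ the single ``old'' phase variable and $s\ge0$ the boundary variable. Writing $\rho_j=\sqrt{x_j^2+(y_j+s\epsilon)^2}$ so that $\phi_\epsilon=\omega(\rho_1+\rho_2-t)$, one has at $s=0$ the usual conormal parametrisation $\phi_\epsilon|_{s=0}=\omega(r_1+r_2-t)$ of $N^*\{t=r_1+r_2\}$, and $\partial_t\phi_\epsilon=-\omega<0$ places it in the component $-\tau>0$, i.e.\ in $\Lambda^\diff_+$. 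With $(s,\omega)$ as all of the phase variables, the stationarity equations are $\partial_\omega\phi_\epsilon=\rho_1+\rho_2-t=0$ and $\partial_s\phi_\epsilon=\omega\,\partial_s(\rho_1+\rho_2)=0$: the second says that $s\mapsto\rho_1+\rho_2$ is at its (unique, by convexity) minimum --- equivalently, that $\mathcal{I}_\epsilon(0,-\epsilon s)$ lies on the Euclidean segment from $\bmq_1$ to $\bmq_2$ read in the chart $\mathcal{I}_\epsilon$ --- and the first then forces $t$ to equal that minimal length, which for $|\theta_1-\theta_2|\le\pi$ is exactly $\operatorname{dist}_{C_\alpha}(\bmq_1,\bmq_2)$. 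Since on the critical set $d_z\phi_\epsilon=\omega\,d_z\big(\operatorname{dist}_{C_\alpha}(\bmq_1,\bmq_2)-t\big)$, the $\partial_s$ contribution dropping out by criticality, the Lagrangian so parametrised is $N^*\{t=\operatorname{dist}_{C_\alpha}(\bmq_1,\bmq_2),\ |\theta_1-\theta_2|\le\pi\}$ intersected with $-\tau>0$, that is $\Lambda^\geom_+$, near $\Sigma$; and its face $s=0$ --- where the segment passes through $\bmp$, forcing $\theta_1-\theta_2=\epsilon\pi$ --- maps onto $\Sigma$, consistently with the $\phi_\epsilon|_{s=0}$ parametrisation of $\Lambda^\diff_+$. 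Because $\partial_s^2\phi_\epsilon=\omega\big(x_1^2\rho_1^{-3}+x_2^2\rho_2^{-3}\big)>0$ throughout (the endpoints never lie on the $y$-axis near the reference geodesic, where $x_1^*,x_2^*\neq0$), the phase $\phi_\epsilon$ is nondegenerate for $\Lambda^\geom_+$ up to and including $s=0$, the face $s=0$ is met transversally, and the fibration over $\Sigma$ is clean. Finally, part~(2) of Theorem~\ref{thm:intro-kernel} records that $a_{\alpha,\epsilon}$ is a classical symbol of order $1$ in $\omega$, smooth in $(t,\bmq_1,\bmq_2,s)$; tracking orders through the Melrose--Uhlmann calculus then yields $m=-1/4$ on the full base $(0,\infty)\times C_\alpha^\circ\times C_\alpha^\circ$ and $m=0$ with $t$ frozen, the passage between the two being the customary $1/4$-shift incurred when $t$ is promoted from a parameter to a base variable.

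Away from $\Sigma$, the kernel $\bmU_\alpha$ is, as recalled in the Introduction (the Geometric Theory of Diffraction), an ordinary FIO on $\Lambda^\diff\setminus\Sigma$ and on $\Lambda^\geom\setminus\Sigma$ of exactly the orders demanded of an element of $I^m(\Lambda^\diff_+,\Lambda^\geom_+)$ there, so a partition of unity together with the coordinate invariance of the Melrose--Uhlmann class \cite{MelUhl} gives the first assertion, for $\calU_\alpha(t)$. For the sine propagator, write $\bfW(t)=(2i\sqrt\Delta)^{-1}\big(e^{it\sqrt\Delta}-e^{-it\sqrt\Delta}\big)$. The kernel of $e^{+it\sqrt\Delta}$ is $\overline{\bmU_\alpha(t,\bmq_1,\bmq_2)}$ by the symmetry of the cone kernel, hence lies in $I^m(\Lambda^\diff_-,\Lambda^\geom_-)$; composing on the left with $(\sqrt\Delta)^{-1}$, a pseudodifferential operator that is elliptic of order $-1$ away from the zero section, preserves both Lagrangians and lowers the order to $m-1$. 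Thus the two terms of $\bfW(t)$ lie in $I^{m-1}(\Lambda^\diff_\mp,\Lambda^\geom_\mp)$ respectively, and their sum in $I^{m-1}(\Lambda^\diff,\Lambda^\geom)$.

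The \emph{main obstacle} is the second paragraph: checking that $\phi_\epsilon$ fulfils all of the nondegeneracy and clean-fibration requirements of a Melrose--Uhlmann phase, and --- crucially --- that the two Lagrangians it generates agree near $\Sigma$ with $\Lambda^\diff_+$ and $\Lambda^\geom_+$ themselves, not merely with Lagrangians carrying the correct wavefront over an open set. The delicate features are the behaviour at the boundary face $s=0$, where the $s$-integration runs only over a half-line and $\phi_\epsilon$ is tangent to its $\omega$-only restriction, and the verification that the $(s,\omega)$-critical manifold has no unexpected excess over $\Sigma$; the order normalisation ($m=-1/4$ versus $m=0$) also requires some care with the Melrose--Uhlmann conventions, but poses no conceptual difficulty. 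Everything else --- the reduction for $\bfW(t)$ and the patching away from $\Sigma$ --- is routine once this normal form is established, the bulk of the analytic work having already been carried out in proving Theorem~\ref{thm:intro-kernel}.
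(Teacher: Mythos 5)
Your argument is circular relative to the logical structure of the paper. You take as input the representation \eqref{Uexpr} of Theorem~\ref{thm:intro-kernel} --- in particular the existence, for \emph{every} cone angle $\alpha$, of an amplitude $a_{\alpha,\epsilon}$ that is a classical symbol of order $1$ in $\omega$ --- and then verify that $\phi_\epsilon$ is a Melrose--Uhlmann phase for $(\Lambda^\diff_+,\Lambda^\geom_+)$ and read off the orders. But in the paper the existence of that representation is itself \emph{deduced} from the membership statement you are asked to prove: the proof of Theorem~\ref{thm:intro-kernel} in Section~\ref{sec:proof} begins by invoking Corollary~\ref{cor:UisMU} (i.e.\ $\bmU_\alpha\in r_1^{-1/2}r_2^{-1/2}I^{-1/4}(\Lambda^\diff_+,\Lambda^\geom_+)$) together with the parametrization machinery of Section~\ref{sec:int-Lagn-distns} to produce the symbol $a_{\alpha,\pm}$; only its value at $s=0$ is then computed by matching with the GTD formula. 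So the one thing your proof needs --- a classical amplitude for general $\alpha$ --- is exactly what the membership theorem supplies, and you have no independent source for it. The actual content of the paper's proof is entirely absent from your proposal: the `moving conical point' computation of Section~\ref{sec:micro-structure}, which yields the explicit kernel \eqref{eq:EasMU} with phase \eqref{Phi-phase} on $C_{4\pi}$ and hence Proposition~\ref{thm:4pi-propagator-structure}; and the Friedlander-based comparison of Proposition~\ref{thm:ml-structure-sing-set-ind}, which shows that near $\Sigma$ the sine kernels for two cone angles differ by a purely diffractive term of lower order, giving Theorem~\ref{thm:structure-sine-prop} for arbitrary $\alpha$. (Your phase-function analysis in the second paragraph does overlap with the paper's discussion at the end of Section~\ref{sec:full-wave-propagator}, but by itself it only shows that \emph{if} such a representation exists then it lies in the class; it does not produce the representation.)

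Two further points, secondary to the circularity. First, the paper passes from the sine kernel to the half-wave kernel by differentiating in $t$ and writing $\cos t\sqrt{\Delta}=\tfrac12\bigl(e^{-it\sqrt\Delta}+e^{it\sqrt\Delta}\bigr)$, using the $\mp\tau>0$ localization of the wavefront sets; you go in the opposite direction, recovering $\bfW(t)$ from $\calU_\alpha(t)$ by applying $(\sqrt\Delta)^{-1}$, and you would need to justify that this functional-calculus operator acts microlocally over $C_\alpha^\circ\times C_\alpha^\circ$ as an order $-1$ pseudodifferential operator preserving the intersecting pair --- plausible, but not automatic on a cone and not argued. Second, your order bookkeeping ($m=-1/4$ with $t$ as a base variable, $m=0$ with $t$ frozen, amplitude order $1$ in $\omega$ with $k=1$) is consistent with Proposition~\ref{prop:int-pair-phasefunction}, so that part is fine once a valid representation is in hand.
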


It can be noted that elements of this class are standard FIOs away from the
intersection $\Sigma$ so that this theorem doesn't say anything new away from
$\Sigma$. On the other hand, although the explicit expression of the propagator
was already known near $\Sigma$, the fact that it belonged to the
Melrose-Uhlmann class was not. It is also worth remarking that it may be
possible to obtain the latter theorem by some brute computations starting from
the explicit expression of the propagator. We propose a different method, the `moving conical point'
method, that
exploits geometric features of wave propagation on cones. It has the advantage that the parameter
$s$ in \eqref{Uexpr} then has geometric significance: it is the distance by which the conical point is shifted.


\begin{remark}
  It is actually convenient to use the Riemannian metric to identify functions
  and half-densities. This amounts to multiply the oscillatory integral
  representation by the half-density $|d\bmq' d\bmq |^{\frac{1}{2}}$ or
  $|dtd\bmq' d\bmq |^{\frac{1}{2}}$.
\end{remark}

\begin{remark}
Recall (or see Section~\ref{sec:int-Lagn-distns}) that in the Melrose-Uhlmann calculus, the order
of the distribution on the first Lagrangian $\Lambda^\diff$ is $\frac{1}{2}$-order
less than on the second, $\Lambda^\geom$. This  allows to recover the
 fact that the diffracted wave is $\frac{1}{2}$-order smoother (in a Sobolev
 sense) than the direct wave (in two dimensions).
\end{remark}

The oscillatory integral representation of the preceding theorem has several
interesting applications mainly because it allows one to compute simply the wave
propagator on an ESCS when microlocalized near a geodesic with several geometric
diffractions.  We will illustrate this by obtaining, for a geodesic with two
geometric diffractions in a row an oscillatory integral representation that fits
into the class of singular FIO that is constructed in \cite[Sections 7--10]{MelUhl} and
associated with a system of \emph{four} intersecting Lagrangians. More precisely,
consider a geodesic of length $t$ between $\bmq$ and $\bmq'$ with two geometric
diffractions at $\bmp_1$ and $\bmp_2$.  There are four types of nearby
geodesics:
\begin{enumerate}
\item non-diffractive geodesics;
\item geodesics that are diffractive at $\bmp_1$ but not at $\bmp_2$;
\item non-diffractive geodesics at $\bmp_1$ that diffract at $\bmp_2$; and
\item geodesics that diffracts at $\bmp_1$ and $\bmp_2.$
\end{enumerate}
Each type corresponds to a particular Lagrangian and these four Lagrangians form
a intersecting system in the sense of \cite{MelUhl}.

Using the preceding theorem and standard stationary phase arguments we obtain
the theorem.
\begin{theorem}
  Microlocally near a geodesic with two geometric diffractions, the half-wave
  propagator on a ESCS is in the Melrose-Uhlmann class of operators associated
  with a system of four intersecting Lagrangians.
\end{theorem}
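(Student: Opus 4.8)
The plan is to reduce the two-diffraction propagator to a composition of two copies of the single-cone kernel from Theorem~\ref{thm:intro-kernel}, and then to recognize the resulting oscillatory integral as an element of the four-Lagrangian Melrose--Uhlmann class. First, by finite propagation speed and the fact that an ESCS is locally isometric to a Euclidean ball or to a ball in some $C_\alpha$, I would localize near the given geodesic running from $\bmq$ to $\bmq'$ through the two cone points $\bmp_1$ and $\bmp_2$, and write the microlocalized half-wave propagator (up to a smooth remainder) as a composition $\calU(t) \approx \calU^{(2)}(t_3) \circ \calU^{(1)}(t_2) \circ \calU^{(0)}(t_1)$, where $\calU^{(0)}$ and $\calU^{(2)}$ are smooth (nondiffractive) Euclidean half-wave propagators near the terminal segments, and $\calU^{(1)}$, $\calU^{(2)}$ are the single-cone half-wave kernels $\bmU_{\alpha_j}$ near the geometrically diffractive segment through each $\bmp_j$, with $t_1 + t_2 + t_3 = t$ being the natural subdivision of the orbit at the cone points. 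Writing the $t$-composition as a convolution, this produces an oscillatory integral with the two phases $\phi_{\epsilon_1}$ and $\phi_{\epsilon_2}$ from \eqref{Uexpr} glued along an intermediate point, together with two extra parameters $s_1, s_2 \geq 0$ and two frequency variables $\omega_1, \omega_2 > 0$, integrated against an amplitude that is a product of the $a_{\alpha_j,\epsilon_j}$ with the smooth symbols of $\calU^{(0)}$, $\calU^{(2)}$.

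Second, I would carry out the stationary phase reduction in the composition variables (the intermediate space-time point and, after rescaling, the ratio of the two frequencies), using that the nondiffractive pieces are nondegenerate FIOs and that the phase is, in each frequency variable separately, the standard distance-minus-time phase. This is routine clean-plus-stationary-phase and collapses the extra composition variables, leaving an oscillatory integral in $(s_1,s_2,\omega_1,\omega_2)$ — or, after the natural change of variables $(\omega_1,\omega_2) \mapsto (\omega, \sigma)$ with $\omega$ the overall scale, in $(s_1, s_2, \omega)$ and one bounded variable. The four boundary strata $\{s_1 = 0 \text{ or } s_1 > 0\} \times \{s_2 = 0 \text{ or } s_2 > 0\}$ are exactly the four geodesic types (i)--(iv) listed above, and the four associated Lagrangians are the conormals to the four space-time distance relations obtained by setting $s_1$ and/or $s_2$ to zero in the combined phase; I would check directly that these four Lagrangians meet pairwise cleanly and form an intersecting system in the sense of \cite{MelUhl}, the key point being that $\{s_1 = 0\}$ and $\{s_2 = 0\}$ are transverse hypersurfaces in the parameter space so the geometry is the model one from \cite[Sections 7--10]{MelUhl}.

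Third, I would verify that the amplitude obtained is a legitimate product-type symbol for that class — smooth in all variables up to the corner $s_1 = s_2 = 0$ and classical of the correct order in $\omega$ — which follows because each $a_{\alpha_j,\epsilon_j}$ is, by Theorem~\ref{thm:intro-kernel}(2), a classical symbol smooth down to $s_j = 0$, and products and the stationary-phase reductions preserve this structure; the order bookkeeping comes from adding the single-cone orders and subtracting the dimensions of the composition variables eliminated, matching the Melrose--Uhlmann normalization in which the most diffractive Lagrangian carries the lowest order.

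I expect the main obstacle to be the second step done cleanly: justifying that the iterated composition of the singular kernel $\bmU_\alpha$ with itself (and with smooth FIOs) stays within a controlled class, i.e., that the composition variables can be removed by stationary phase \emph{uniformly} as $s_1, s_2 \to 0$, where the phase $\phi_{\epsilon_j}$ degenerates. One must check that the relevant Hessian in the composition variables stays nondegenerate on the critical set even at $s_1 = s_2 = 0$ (this uses that the intermediate segments of the orbit have positive length, so the two diffractions are genuinely separated), and that the $s_j$-dependence of the phase is mild enough that the symbol estimates survive. Granting this uniformity, assembling the four-Lagrangian statement is then a matter of matching the resulting normal form against the definition recalled in Section~\ref{sec:int-Lagn-distns}.
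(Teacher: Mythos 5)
Your proposal is correct and follows essentially the same route as the paper: split the microlocalized propagator at an intermediate time into factors each seeing only one cone point, substitute the single-cone Melrose--Uhlmann representation of Theorem~\ref{thm:intro-kernel} in each factor, eliminate the intermediate point and one frequency by stationary phase, and recognize the resulting phase $\big[|\bmq_2-\bmp_2(s_2)|+|\bmp_2(s_2)-\bmp_1(s_1)|+|\bmp_1(s_1)-\bmq_1|-t\big]\omega$ as a parametrization of the four-Lagrangian system, with the order read off from the amplitude. The uniformity issue you flag is exactly the point the paper checks: the Hessian in the intermediate variables has determinant $-C\omega_1$ with $C=\tfrac1A+\tfrac1B>0$ uniformly down to $s_1=s_2=0$, precisely because the legs of the geodesic have positive length.
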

We actually get much more accurate information since we can derive the principal symbol of the half-wave propagator on the twice diffracted front --- see \eqref{atilde4} and \eqref{2Diff-prsymb}.

Finally we will use our new expression for $\bmU_\alpha$ to compute the
contribution to the wave-trace of an isolated periodic geodesic with two
geometric diffractions.

\begin{proposition}
  On a ESCS, the leading contribution to the wave trace of an isolated periodic
  diffractive orbit with two geometric diffractions is
  \[
  -\frac{1}{4i\pi^2} \cdot \sqrt{b(L-b)} \cdot (t-L-i0)^{-1}.
  \]
\end{proposition}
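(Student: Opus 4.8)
The plan is to localize $\Tr \calU(t)$ near $t=L$, realize the relevant piece of $\calU(t)$ as a composition of two copies of the parametrix of Theorem~\ref{thm:intro-kernel} (one per cone point), and reduce the resulting oscillatory integral to a single one-dimensional integral in a frequency $\omega$, from which the singularity type and coefficient are read off. By finite propagation speed and the propagation-of-singularities picture recalled above, for $t$ near $L$ only a microlocal neighbourhood of the closed orbit $\gamma$ contributes to the trace, and there $\calU(t)$ factors as: propagation along the arc of $\gamma$ of length $b$, a geometric diffraction at $\bmp_2$, propagation along the complementary arc of length $L-b$, a geometric diffraction at $\bmp_1$, and propagation back to the start. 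Substituting \eqref{Uexpr} for the two diffractive factors (equivalently, using the symbol of $\calU(t)$ on the twice-diffracted front, see \eqref{2Diff-prsymb}) and the Euclidean wave kernels for the non-diffractive factors, and then restricting the composed Schwartz kernel to the diagonal $\bmq=\bmq'$, one obtains an oscillatory integral in the two frequencies $\omega_1,\omega_2$, the two cone-shift parameters $s_1,s_2\ge 0$, an intermediate time $t'$ (with $t_1+t_2=t$), a cut point $\bmz$ on the long arc, and the base point $\bmq$; by Theorem~\ref{thm:intro-kernel}(1) the phase is $\omega_1[\ell_1-t_1]+\omega_2[\ell_2-t_2]$ with $\ell_1,\ell_2$ sums of unfolded Euclidean lengths, so integrating in $t'$ forces $\omega_1=\omega_2=:\omega$.

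Next I would carry out the stationary-phase reductions, following the Duistermaat--Guillemin template \cite{DuiGui} adapted to the singular Fourier integral operators of \cite{MelUhl}. In Fermi coordinates $(u,v)$ along $\gamma$, the critical manifold of the phase is the orbit $\{v=0,\ s_1=s_2=0\}$; the $u$-direction is an exactly degenerate direction — it accounts for the excess of the clean composition and is simply integrated over $\gamma$, producing the primitive length — while $v$, $s_1$, $s_2$ and the cut-point variables are non-degenerate stationary-phase variables. The transverse Hessian of the length function is $\partial_v^2\ell=\tfrac1u+\tfrac1{b-u}=\tfrac{b}{u(b-u)}$ on the short arc, with the analogous expressions on the long arc and at the cut point. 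For the $s_1,s_2$ integrations the decisive input is Theorem~\ref{thm:intro-kernel}(3): the amplitude at $s_j=0$ equals $-2\pi i\epsilon_j\,(r_1r_2)^{-1/2}\,S_{\alpha_j}(\theta_1-\theta_2)\,[\sin\theta_1+\sin\theta_2]\,\omega$, and although $\sin\theta_1+\sin\theta_2$ vanishes and $S_{\alpha_j}(\theta_1-\theta_2)$ has a pole at $\theta_1-\theta_2=\pm\pi$ along the geometrically diffractive geodesic, their product has a finite, nonzero limit there — this is what makes each diffraction contribute at leading order. Performing all the Gaussian integrations, the $(r_1r_2)^{-1/2}$ transport factors combine with the square roots of the transverse Hessians so that the integrand along $\gamma$ becomes, up to the explicit constant, independent of $u$; collecting the two factors $(2\pi)^{-2}$ from \eqref{Uexpr}, the stationary-phase normalizations and Maslov phases, and the values of $S_{\alpha_j}$ at $\pm\pi$, one is left with $-\tfrac1{4\pi^2}\sqrt{b(L-b)}\int_0^\infty e^{-i\omega(t-L)}\,d\omega$, which is the asserted $-\tfrac1{4i\pi^2}\sqrt{b(L-b)}(t-L-i0)^{-1}$.

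The main obstacle is that the stationary points in $s_1$ and $s_2$ lie on the boundary $\{s_j=0\}$ of the integration region, and this boundary is exactly the locus where the diffractive and geometric Lagrangians $\Lambda^\diff$ and $\Lambda^\geom$ coalesce — i.e., $\gamma$ sits at the corner of the four-Lagrangian system of \cite{MelUhl}. Thus the $s_j$-reductions are not ordinary interior stationary phase: one must perform stationary phase with boundary carefully, tracking the correct half-Gaussian and Maslov contributions, or equivalently feed in the already-established principal symbol of $\calU(t)$ on the twice-diffracted front (\eqref{2Diff-prsymb}). Checking that the composition of the two single-diffraction parametrices along $\gamma$ is clean with a one-dimensional excess, that the cut point introduced in it drops out, and that the lower-order terms in the symbol expansions of Theorem~\ref{thm:intro-kernel}(2) do not affect the leading singularity, is the remaining technical work.
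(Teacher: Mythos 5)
Your overall route is the paper's: compose the two single-diffraction Melrose--Uhlmann parametrices of Theorem~\ref{thm:intro-kernel} (this composition is exactly Proposition~\ref{prop:2dif}, with amplitude \eqref{atilde4}), restrict to the diagonal, do a transverse stationary phase along the orbit, and use the finiteness of $\sin\theta_j\,S_{\alpha_j}$ at the geometrically diffractive angles. But there is a genuine gap at the decisive step, the $(s_1,s_2)$-integration. You treat $s_1,s_2$ as non-degenerate stationary-phase variables and propose to fix the boundary issue by ``stationary phase with boundary, tracking half-Gaussian and Maslov contributions.'' This cannot work as stated: after the diagonal restriction and the transverse reduction, the phase is \eqref{psitilde}, which depends on $(s_1,s_2)$ only through $u=s_1+s_2$; its Hessian in $(s_1,s_2)$ has rank one, so there is no (half-)Gaussian reduction in these two variables, and a formal half-Gaussian count would in any case produce the wrong power of $\omega$ and hence the wrong order of the trace singularity. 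The paper's treatment is different in kind: change variables to $u=s_1+s_2$, $v=s_1-s_2$; the $v$-integral over the triangle $|v|\leqslant u$ is non-oscillatory and yields an amplitude vanishing linearly at $u=0$; then the $u$-integral, with amplitude $O(u)$ and phase stationary at the boundary with $\partial_u^2\tilde\psi(0,\omega)=\omega L/(b(L-b))$, is evaluated by an integration by parts (equivalently $\int_0^\infty u\,e^{ic\omega u^2}du=O(\omega^{-1})$, not $O(\omega^{-1/2})$). It is precisely this elementary, non-Gaussian factor $(\partial_u^2\tilde\psi)^{-1}=b(L-b)/(\omega L)$ that produces both the order $(t-L-i0)^{-1}$ and, after the $1/L$ cancels against the primitive length coming from the covering/cyclicity argument (which you also omit), the coefficient $\sqrt{b(L-b)}$. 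Without identifying this degenerate structure, your ``performing all the Gaussian integrations'' does not yield the claimed constant.

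Your proposed fallback, ``feed in the already-established principal symbol on the twice-diffracted front \eqref{2Diff-prsymb},'' also does not suffice: that symbol contains $S_{\alpha_1}(-\pi-\theta_1)S_{\alpha_2}(\theta_2)$ without the regularizing factors $\sin\theta_1\sin\theta_2$ and therefore blows up exactly at the corner of the four-Lagrangian system, which is where the diagonal meets the canonical relation for a periodic orbit with two geometric diffractions. What the trace computation actually requires is the full amplitude \eqref{atilde4} evaluated at $s_1=s_2=0$ (the regularization supplied by Theorem~\ref{thm:intro-kernel}(3)), fed into the degenerate $u,v$ analysis above; the symbol on $\Lambda_0$ away from the corner is not enough.
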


This is perhaps the simplest setting for which neither \cite{ForWun} nor
\cite{Hil} applies. This proposition shows that such a geodesic creates in the
wave-trace a singularity that is comparable to the singularity that is created
in a smooth setting by an isolated periodic orbit. The singularity is $\frac1{2}$ stronger
than a diffractive geodesic with one non-geometric diffraction and $\frac1{2}$
weaker than a cylinder of periodic orbits.

With our new representation of the wave kernel, it should actually be possible
to compute the full asymptotic expansion of the contribution to the wave-trace
of any kind of periodic diffractive geodesic. This is a far-reaching
generalization of results in \cite{Hil} and it leads to the possible computation
of many wave-invariants. This opens new questions concerning inverse spectral
problems in this kind of geometric setting which, we recall, includes Euclidean
polygons. For instance it can be asked whether the full asymptotic expansion of
a particular geodesic allows to recover the full picture describing the
geodesic: that is the number of diffractions, the lengths of the legs between
two diffractions, the diffraction angles and the angles of the cone at which the
diffractions occur. We will tackle some of these questions in the second and
third parts of this series.

\subsection{Organisation of the paper}
In Section \ref{sec:int-Lagn-distns} we will recall the definition of singular
Fourier Integral Operators as defined in \cite{MelUhl}.  We will first study the
case of two intersecting Lagrangians.  We will give the oscillatory integral
representation using a phase function that depends on an extra parameter $s$. We
will then give the generalization to a system of four intersecting Lagrangians.

In Section \ref{sec:micro-structure} we will study wave propagation on a cone of
total angle $4\pi.$ The first reason why we study this particular cone is that
it is the simplest case in which we can implement our method of `moving the
conical point' that leads to our new expression for the wave propagator. The
fact that the wave propagator belongs to the Melrose-Uhlmann class can then be
directly read off from this expression. It is also worth remarking that, in this
case the extra parameter $s$ has a geometric meaning since it represents the
amount of which the conical point has moved.

The second reason why we can first study the cone of angle $4\pi$ is that the
most singular part of the wave propagator near $\Sigma$ actually does not depend
on its angle. This can be seen using the construction of the wave kernel made by
Friedlander in \cite{Fri}. We will recall this fact in Section
\ref{sec:micro-structure-alpha} and then proceed to prove Theorem
\ref{thm:intro-kernel}.

In Section \ref{sec:wave-kernel-two-diffractions} we will use Theorem
\ref{thm:intro-kernel} to compute the wave propagator when microlocalized near
some particular kind of geodesics. We will focus on the case of a geodesic with
two geometric diffractions for which a desciption of the microlocalized
propagator is not already available in the literature.

In Section \ref{sec:wave-invariants} we will end this paper by computing the
leading contribution to the wave-trace of an isolated periodic orbit with two
geometric diffractions.


\section{Intersecting Lagrangian distributions}
\label{sec:int-Lagn-distns}

The class of distributions central to our study of the wave propagators on
$C_\alpha$ is that of intersecting Lagrangian distributions, introduced by
Melrose and Uhlmann \cite{MelUhl}.  These are distributions whose singularities
(in terms of wavefront set) lie along a pair of conic Lagrangian submanifolds
$(\Lambda_0, \Lambda_1)$ of the cotangent bundle. Here, $\Lambda_1$ is a
manifold with boundary, and $\Lambda_0$ and $\Lambda_1$ intersect cleanly at
$\partial \Lambda_1$. In particular, the intersection is codimension $1$ in both
Lagrangians.  These distributions were introduced to construct fundamental
solutions to operators of real principal type. An analogous class of
distributions associated to four intersecting Lagrangian submanifolds, also
introduced in \cite{MelUhl}, will show up in our study of the wave kernel on a
ESCS after two diffractions---see Section~\ref{subsec:4il}.

\subsection{Model Lagrangian submanifolds} Let $X$ be a manifold, and let
$(\Lambda_0, \Lambda_1)$ be a pair of conic Lagrangian submanifolds of
$T^* X \setminus \{ 0 \}$ with the geometry described above: $\Lambda_1$ is a
manifold with boundary, and $\Lambda_0$ and $\Lambda_1$ intersect cleanly at
$\partial \Lambda_1$. Moreover, let $q \in \Lambda_0 \cap \Lambda_1$ be a point
in the intersection.  Melrose and Uhlmann showed that there is a normal form for
this geometry. Indeed, let $(\tilde \Lambda_0, \tilde \Lambda_1)$ be the model
Lagrangian submanifolds in $T^* \RR^n$:
\begin{equation}\begin{aligned}
    \Lambda_0 &= N^* \{ 0 \} = \{ (x, \xi): x = 0, \} \\
    \Lambda_1 &= N^* \{ x' = 0, x_1 \geq 0 \} = \{ (x, \xi): x' = 0, \xi_1 = 0,
    x_1 \geq 0 \}.
  \end{aligned}\end{equation}
Here we decompose $x = (x_1, x')$, where $x' = (x_2, \dots, x_n)$; similarly,
$\xi = (\xi_1, \xi')$. Choose any point
$\tilde q \in \tilde \Lambda_0 \cap \tilde \Lambda_1$.  Then Melrose and Uhlmann
showed that there is a homogeneous sympectic map from a conic neighbourhood of
$\tilde q$ to a conic neighbourhood of $q$, such that $\tilde \Lambda_i$ gets mapped to $\Lambda_i$. To
define intersecting Lagrangian distributions, they first defined them in the
model situation.
We recall this definition.

\begin{definition}[Melrose-Uhlmann] An intersecting Lagrangian distribution of order $m$
  associated to the model pair $(\tilde \Lambda_0, \tilde \Lambda_1)$ is a
  distributional half-density given by an oscillatory integral of the form
  \begin{equation}\label{def:modelILD}
    (2\pi)^{-n - \frac{1}{2}} \int \int_0^\infty e^{i(x \cdot \xi - s \xi_1)}
    a(x, s, \xi) \, ds \, d\xi  |dx|^{\frac{1}{2}}
  \end{equation}
  where $a$ is smooth, compactly supported in $x$ and $s$, and a symbol of order
  $m +\frac{1}{2}-\frac{n}{4}$ in $\xi$. The space of such distributions is
  denoted $I^m(X; \tilde\Lambda_0, \tilde\Lambda_1)$.
\end{definition}

It is shown in \cite{MelUhl} that elements of
$I^m(X; \tilde\Lambda_0, \tilde\Lambda_1)$ are Lagrangian distributions of order
$m$ on $\tilde \Lambda_1$ when microlocalized away from $\tilde \Lambda_0$, and
Lagrangian distributions of order $m - \frac{1}{2}$ on $\tilde \Lambda_0$ when
microlocalized away from $\tilde \Lambda_1$.  Also, they showed that the space
$I^m(X; \tilde\Lambda_0, \tilde\Lambda_1)$ is invariant under the action of
Fourier integral operators that fix $\tilde\Lambda_0$ and $\tilde\Lambda_1$.
Consequently, one can define intersecting Lagrangian distributions for a general
pair $(\Lambda_0, \Lambda_1)$ to be the image of the model space
$I^m(X; \tilde\Lambda_0, \tilde\Lambda_1)$ under an FIO mapping
$\tilde \Lambda_i$ to $\Lambda_i$. The precise definition is as follows:

 \begin{definition}\label{def:ILD}
   Let $(\Lambda_0, \Lambda_1)$ be a pair of intersecting conic Lagrangian
   distributions in $T^* X \setminus \{ 0 \}$ with the geometry described
   above. The space $I^m(X; \Lambda_0, \Lambda_1)$ consists of those
   distributional half-densities $u$ that can be written as a locally finite sum
 $$
 u = u_0 + u_1 + \sum_i F_i (v_i) + u_\infty,
 $$
 where $u_0 \in I^{m-\frac{1}{2}}(X;\Lambda_0)$,
 $u_1 \in I^m(X;\Lambda_1 \setminus \Lambda_0)$,
 $v_i \in I^m(X;\tilde \Lambda_0, \tilde \Lambda_1)$, $F_i$ are FIOs mapping
 $(\tilde\Lambda_0, \tilde\Lambda_1)$ to $(\Lambda_0, \Lambda_1)$, and
 $u_\infty$ is $\calC^\infty$.
\end{definition}

In what follows, we will often omit the space `$X$' from the notation for these
distributions, i.e., we will write $I^m(\Lambda_0,\Lambda_1)$ in the place of
$I^m(X;\Lambda_0,\Lambda_1)$.

\subsection{Parametrization of intersecting Lagrangian submanifolds}

Over the course of this paper, we will construct the fundamental solution of the
wave kernel on a two-dimensional cone directly; we will want to be able to
identify it as an intersecting Lagrangian distribution. To do this, we need a
direct definition of intersecting Lagrangian distribution in terms of a phase
function parametrizing a given pair $(\Lambda_0, \Lambda_1)$ in place of the
indirect Definition~\ref{def:ILD}.

\begin{definition}\label{def:intLeg-param} Let $(\Lambda_0, \Lambda_1)$ be a
  pair of intersecting Lagrangian submanifolds, and let
  $q \in \Lambda_0 \cap \Lambda_1$ be a point in the intersection.  A local
  parametrization of $(\Lambda_0, \Lambda_1)$ near $q$ is a function
  $\phi(x, \theta, s)$, defined in neighbourhood of
  $(x_0, \theta_0, 0) \subseteq X \times \RR^k \times \bbR_{\geqslant 0}$ such
  that
  \begin{itemize}
  \item $d_{\theta, s} \phi(x_0, \theta_0, 0) = 0$, and
    $q = (x_0, d_x \phi(x_0, \theta_0, 0))$;
  \item the differentials
    $$
    d_{x, \theta} \bigg( \frac{\partial \phi}{\partial \theta_i} \bigg) \quad
    \text{and} \quad d_{x, \theta} \bigg( \frac{\partial \phi}{\partial s}
    \bigg)
    $$
    in the $(x, \theta)$ directions are linearly independent at
    $(x_0, \theta_0, 0)$;
  \item the map
    \begin{equation}
      C_0 \defeq \{ (x, \theta) : d_\theta \phi(x, \theta, 0) = 0 \} \mapsto \{
      (x, d_x \phi(x, \theta, 0)) \} \subseteq T^* X
      \label{C0bij}\end{equation}
    is a local diffeomorphism from $C_0$ onto a neighbourhood of $q$ in
    $\Lambda_0$;
  \item the map
    \begin{equation}
      C_1 \defeq \{ (x, \theta, s) : d_{\theta,s} \phi(x, \theta, s) = 0, \ s
      \geq 0 \} \mapsto \{ (x, d_x \phi(x, \theta, s)) \} \subseteq T^* X
      \label{C1bij}\end{equation}
    is a local diffeomorphism from $C_1$ onto a neighbourhood of $q$ in
    $\Lambda_1$.
  \end{itemize}
\end{definition}

Let us make some remarks about the definition above. The second condition
ensures that the sets $C_0$ is a smooth submanifold of $X \times \RR^k$ of dimension
$n = \dim X$, and $C_1$ is a smooth submanifold of
$X \times \RR^k \times \bbR_{\geqslant 0}$ of dimension
$n$ transverse to $\{ s = 0 \}$. This makes it
possible to speak of diffeomorphisms from $C_i$ to $\Lambda_i$ as in the third
and fourth conditions.  The first condition simply says that the base point
$(x_0, \theta_0, 0)$ corresponds to the base point $q$.

\begin{proposition}\label{prop:int-pair-phasefunction}
  (i) Let $(\Lambda_0, \Lambda_1) \subseteq T^* X$ be a pair of intersecting
  Lagrangian submanifolds, and let $q$ be a point in the intersection. Then
  there exists a local parametrization of $(\Lambda_0, \Lambda_1)$ near $q$.

  (ii) Let $\phi$, defined in a neighbourhood $U$ of
  $(x_0, \theta_0, 0) \in X \times \RR^k \times R_{\geq 0}$, be a local
  parametrization of $(\Lambda_0, \Lambda_1)$ near $q$. Let $a(x, \theta, s)$ be
  a classical symbol of order $m - \frac{k}{2} + \frac{1}{2} + \frac{n}{4}$ in
  the $\theta$ variables which is compactly supported in $U$.  Then the
  oscillatory integral
  \begin{equation}
    (2\pi)^{- \frac{k}{2} - \frac{n}{4} - \frac{1}{2}} \int_{\RR^k}
    \int_0^\infty e^{i\phi(x, \theta, s)} a(x, \theta,
    s) \, ds \, d\theta \, |dx|^{\frac{1}{2}}
    \label{intlegm}\end{equation}
  is in $I^m(\Lambda_0, \Lambda_1)$.

\end{proposition}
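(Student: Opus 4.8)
The plan is to prove (i) and (ii) together by reducing to the model situation via the Melrose--Uhlmann normal form, and then checking that the oscillatory integral \eqref{intlegm} is carried to the model oscillatory integral \eqref{def:modelILD} by a suitable Fourier integral operator. For part (i), I would start from the fact (recalled in the text) that there is a homogeneous symplectomorphism $\chi$ taking a conic neighbourhood of $\tilde q \in \tilde\Lambda_0 \cap \tilde\Lambda_1$ to a conic neighbourhood of $q$, with $\chi(\tilde\Lambda_i) = \Lambda_i$. The model pair $(\tilde\Lambda_0, \tilde\Lambda_1)$ is parametrized by the phase $\tilde\phi(x, \theta, s) = x\cdot\theta - s\theta_1$ with $\theta \in \RR^n$ playing the role of the fibre variable (so $k = n$ here), and one checks directly that $\tilde\phi$ satisfies all four bullet points of Definition~\ref{def:intLeg-param}: $d_{\theta,s}\tilde\phi = (x - s e_1, \dots, -\theta_1)$ vanishes at $(0, \theta_0, 0)$ provided $\theta_{0,1} = 0$; the differentials $d_{x,\theta}(x_j - s\delta_{j1})$ and $d_{x,\theta}(-\theta_1)$ are linearly independent; $C_0 = \{x = 0\}$ maps diffeomorphically onto $\tilde\Lambda_0$; and $C_1 = \{x' = 0, \theta_1 = 0, x_1 = s \geq 0\}$ maps diffeomorphically onto $\tilde\Lambda_1$. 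To transport this to a general pair, I would appeal to the standard equivalence-of-phase-functions machinery (H\"ormander): since any homogeneous canonical transformation is locally generated by a phase function, one can compose $\tilde\phi$ with a generating function for $\chi$ and apply the stationary-phase / non-degenerate substitution lemma to produce a new phase $\phi(x, \theta, s)$ (possibly after enlarging or shrinking the number of $\theta$ variables) parametrizing $(\Lambda_0, \Lambda_1)$ near $q$; the key point is that the two families $\{d_\theta\phi = 0\}$ and $\{d_{\theta,s}\phi = 0, s\geq 0\}$ track the two Lagrangians separately, which is exactly the content of the third and fourth bullets, and this structure is preserved under the clean substitutions used in that machinery precisely because $\Lambda_0$ and $\Lambda_1$ intersect cleanly in codimension one along $\{s = 0\}$.

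For part (ii), given an arbitrary local parametrization $\phi$ and a symbol $a$ of the stated order, I would argue that the oscillatory integral \eqref{intlegm} lies in $I^m(\Lambda_0, \Lambda_1)$ by matching it against Definition~\ref{def:ILD}. The strategy is: first, microlocally away from $\Lambda_0$ (i.e.\ on the part of the integral where $d_s\phi = -$(something) stays bounded away from making $s$ integration contribute a boundary term, equivalently away from the boundary $\partial\Lambda_1$), the $s$-integral can be carried out by non-stationary phase up to the boundary contribution, and what remains is a standard Lagrangian oscillatory integral with phase $\phi$ and fibre variables $(\theta, s)$ parametrizing $\Lambda_1$, hence an element of $I^m(X; \Lambda_1 \setminus \Lambda_0)$ by the H\"ormander parametrization count — one checks the order bookkeeping: with $k+1$ fibre variables and symbol order $m - \tfrac{k}{2} + \tfrac12 + \tfrac{n}{4}$, the resulting Lagrangian order is $m - \tfrac{k+1}{2} + \tfrac{n}{4} + (\text{symbol order}) = m$, as required. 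Second, microlocally away from $\Lambda_1$, integrating by parts in $s$ once (using $\partial_s e^{i\phi} = i(\partial_s\phi) e^{i\phi}$ and that $\partial_s\phi$ is elliptic there) picks up a boundary term at $s = 0$ of the form $\int e^{i\phi(x,\theta,0)} \frac{a(x,\theta,0)}{i\partial_s\phi(x,\theta,0)}\, d\theta$ plus a lower-order bulk remainder; the boundary term is a standard Lagrangian integral with phase $\phi(\cdot,\cdot,0)$ parametrizing $\Lambda_0$ and $k$ fibre variables, and the division by $\partial_s\phi$ drops the symbol order by $1$, giving order $m - 1 - \tfrac{k}{2} + \tfrac12 + \tfrac{n}{4}$; the H\"ormander count then yields Lagrangian order $m - \tfrac{k}{2} + \tfrac{n}{4} + (m - 1 - \tfrac{k}{2} + \tfrac12 + \tfrac{n}{4}) $— rather, order $= (\text{symbol order}) - \tfrac{k}{2} + \tfrac{n}{4} = m - \tfrac12$, matching the $u_0 \in I^{m-\frac12}(X;\Lambda_0)$ requirement and the half-order shift recalled in the remarks. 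Finally, near $q$ itself (the overlap region), I would produce an FIO $F$ quantizing the canonical transformation $\chi$ from the model and reduce to showing that $F$ applied to the model integral \eqref{def:modelILD} is, up to a smooth error, of the form \eqref{intlegm} — this is exactly the statement that the class $I^m(\tilde\Lambda_0,\tilde\Lambda_1)$ is FIO-invariant (proven in \cite{MelUhl}), combined with the equivalence-of-phases argument from part (i) applied now at the level of amplitudes.

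The main obstacle I anticipate is the bookkeeping at the overlap point $q$: away from the intersection everything is classical H\"ormander theory, but showing that the single phase $\phi$ with the extra non-negative variable $s$ genuinely produces a Melrose--Uhlmann distribution (rather than something with uncontrolled behaviour at $\partial\Lambda_1$) requires carefully tracking how the $s$-integration interacts with the stationary-phase reductions used to change phase functions. Concretely, one must verify that the clean substitution lemma can be applied \emph{in the $\theta$ variables only}, leaving the structural variable $s$ and the boundary $\{s=0\}$ intact, and that the symbol classes transform with the claimed orders; the compatibility of the two order conventions (the $-\tfrac12$ shift between $\Lambda_0$ and $\Lambda_1$) falls out of this if the substitution is done correctly, but it is the step most prone to off-by-a-constant errors. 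A secondary technical point is ensuring the non-degeneracy hypotheses in Definition~\ref{def:intLeg-param} (the linear independence of the two families of differentials) are exactly what is needed to make the phase an honest nondegenerate phase function for $\Lambda_1$ while $\phi(\cdot,\cdot,0)$ is simultaneously a nondegenerate phase function for $\Lambda_0$ — I would state and use a short lemma to that effect before doing the oscillatory-integral manipulations.
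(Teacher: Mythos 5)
Your part (i) is in substance the paper's argument: one adds a generating function $\Psi$ for the Melrose--Uhlmann normal-form transformation $\chi$ to the model phase $y\cdot\eta-\eta_1 s$, homogenizes (the paper sets $Y=|\theta|y$ following \cite{HorFIO}), and verifies the four conditions of Definition~\ref{def:intLeg-param} directly; no stationary-phase reduction of fibre variables is needed, so your extra step there is harmless but superfluous. The genuine gap is in part (ii), at the intersection point $q$. By Definition~\ref{def:ILD}, to place \eqref{intlegm} in $I^m(\Lambda_0,\Lambda_1)$ you must exhibit it, microlocally near $q$, as $F(v)$ with $v$ in the \emph{model} class; your step runs in the opposite direction (showing that $F$ applied to the model integral \eqref{def:modelILD} has the form \eqref{intlegm}), which does not show that your given integral arises this way. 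What must actually be proved --- and what the paper proves --- is that after composing with a microlocal inverse $F^{-1}$, so that the total phase $S(y,x,\omega)+\phi(x,\theta,s)$ parametrizes the model pair, an oscillatory integral with an \emph{arbitrary} nondegenerate phase parametrizing $(\tilde\Lambda_0,\tilde\Lambda_1)$ together with the half-line variable $s$ lies in the model class $I^m(\tilde\Lambda_0,\tilde\Lambda_1)$.

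The paper's mechanism for that step is concrete: first, stationary phase in excess $\theta''$-variables (chosen so that $d^2_{\theta''\theta''}\Phi$ is nondegenerate at $s=0$), carried out uniformly in $s$ by the implicit function theorem; second, H\"ormander's equivalence of phase functions applied to the $s=0$ slice (same number of fibre variables, same signature) to bring the phase to the form $y\cdot\theta'+O(s)$; third, and crucially, the argument of Proposition 3.2 of \cite{MelUhl}, from their equation (3.7) onward, which absorbs the remaining $O(s)$ dependence into the exact model phase $y\cdot\theta'-s\theta'_1$ with the stated symbol orders. You correctly flag the first two items as the delicate point (``substitution in the $\theta$ variables only, leaving $s$ and $\{s=0\}$ intact''), but you offer no substitute for the third, and H\"ormander's equivalence theorem alone cannot supply it: the phase is not homogeneous in $s$, and for $s>0$ the slices $\phi(\cdot,\cdot,s)$ parametrize an $s$-dependent family of Lagrangians rather than a fixed one, so a slice-by-slice change of variables does not by itself yield the model form with uniform symbolic control. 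Your microlocal decomposition away from the intersection (interior critical points in $s$ for $\Lambda_1\setminus\Lambda_0$, integration by parts in $s$ for $\Lambda_0\setminus\Lambda_1$) is sound in substance and gives the correct orders $m$ and $m-\tfrac12$ --- though the pair $(\theta,s)$ is not jointly homogeneous, so a homogenization of $s$ is needed before quoting the H\"ormander count, and your intermediate order formulas contain sign slips --- but those regions are already covered by classical theory; the content of the Proposition is precisely the step at $q$ that your plan leaves open.
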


\begin{proof}
  (i) By \cite{MelUhl}, there is a homogeneous canonical transformation $\chi$
  defined in a neighbourhood of
  $\tilde q \in \tilde \Lambda_0 \cap \tilde \Lambda_1$ mapping
  $\tilde \Lambda_0$ to $\Lambda_0$ and $\tilde \Lambda_1$ to $\Lambda_1$, and
  sending $\tilde q$ to $q$. Let $\Psi(x, y, \theta)$ be a phase function
  parametrizing the graph of this canonical transformation. Consider the sum of
  the phase functions
  $$
  \Psi(x, y, \theta) + y \cdot \eta - \eta_1 s,
  $$
  where the second phase function is the standard parametrization of the model
  Lagrangian pair.  Following \cite{HorFIO}*{p.~175}, we define a new variable
  $$
  Y = |\theta| y.
  $$
  We then write this sum of the phase functions in terms of $Y$. That is, we
  define
  $$
  \phi(x, Y, \theta, \eta, s) = \Psi\!\left(x, \frac{Y}{|\theta|}, \theta\right)
  +\frac{Y}{|\theta|} \cdot \eta - \eta_1 s .
  $$
  Notice that $\phi$ is homogeneous of degree 1 in the variables
  $(Y, \theta, \eta)$.  We claim that $\phi$ is a nondegenerate local
  parametrization of $(\Lambda_0, \Lambda_1)$ near $q$.

  Let $(y_0, \eta_0, 0)$ be the point corresponding to $\tilde q$ and
  $(x_0, y_0, \theta_0)$ be the point corresponding to $(q, \tilde q)$ in the
  graph of $\chi$. Then $d_{\theta, Y, \eta} \phi = 0$ and $s=0$ implies that
  $d_\theta \Psi(x_0, y_0, \theta_0) = 0$, $y_0 = 0$,
  $d_y \Psi(x_0, 0, \theta_0) = -\eta$ and
  $d_x\Psi(x_0, 0, \theta_0) = \chi(0, \eta) = q$, so the first condition in
  Definition~\ref{def:intLeg-param} is satisfied.

  We next check that the second condition is satisfied, i.e., that $\phi$ is
  nondegenerate. To do this, we claim that the differentials
  $$
  d_{x, \theta} \bigg( \frac{\partial \Psi}{\partial \theta_i} \bigg) \quad
  \text{and} \quad d_{x, \theta} \bigg( \frac{\partial \Psi}{\partial y_i}
  \bigg)
  $$
  are linearly independent at $(x_0, y_0, \theta_0)$. This is a consequence of
  the fact that $\Psi$ parametrizes $\Lambda_{\Psi}$, the (twisted) graph of the
  canonical transformation $\chi$, which implies that the functions $y_i$ and
  $d_{y_j} \Psi$ are coordinates on $\Lambda_{\Psi}$. Using the diffeomorphism
  between
  $$
  C_\Psi = \{ (x, y, \theta) : d_\theta \Psi = 0 \}
  $$
  and $\Lambda_\Psi$, we see that $y_i$ and $d_{y_j} \Psi$ are coordinates on
  $C _\Psi$. This implies that
  $$
  y_i, \quad \frac{\partial \Psi}{\partial y_j}, \quad \text{and} \quad
  \frac{\partial \Psi}{\partial \theta_i}
  $$
  have linearly independent differentials at $(x_0, y_0,
  \theta_0)$. Equivalently we can say that
  $$
  d_{x, \theta} \bigg( \frac{\partial \Psi}{\partial y_j} \bigg) \quad
  \text{and} \quad d_{x, \theta} \bigg( \frac{\partial \Psi}{\partial \theta_i}
  \bigg)
  $$
  are linearly independent at $(x_0, y_0, \theta_0)$. This in turn is equivalent
  to the statement that
  \begin{equation}
    \label{phi-nondeg-1}
    d_{x, \theta} \bigg( \frac{\partial \phi}{\partial Y_j} \bigg)
    \text{ and }  d_{x, \theta} \bigg( \frac{\partial \phi}{\partial \theta_i}
    \bigg) \text{ are linearly independent at } (x_0, Y_0, \theta_0),
  \end{equation}
  where $Y_0 = y_0 |\theta_0|$.  Now, from the explicit form of $\phi$ it is
  evident that
  \begin{equation}
    \label{phi-nondeg-2}
    d_{Y, \eta} \bigg( \frac{\partial \phi}{\partial \eta_i} \bigg) \text{ and }
    d_{Y, \eta} \bigg( \frac{\partial \phi}{\partial s} \bigg)  \text{ are linearly
      independent at } (x_0, Y_0, \theta_0).
  \end{equation}
  Putting \eqref{phi-nondeg-1} and \eqref{phi-nondeg-2} together we find that
  $\phi$ is a nondegenerate phase function, i.e., it satisfies the second point
  in Definition~\ref{def:intLeg-param}.

  To check the third point, consider a point $(x, Y, \theta, \eta, 0)$ where
  $d_{ Y, \theta, \eta} \phi = 0$ and $s=0$. This implies that
  \begin{equation}
    d_\theta \Psi(x, y, \theta) = 0, \quad d_\eta (y \cdot \eta) = 0, \quad
    \text{and} \quad
    d_y\Psi(x, y, \theta) + d_y (y \cdot \eta) = 0.
  \end{equation}
  Using the fact that $\Psi$ parametrizes the twisted graph of $\chi$, this
  implies that
  \begin{equation}
    y = 0, \quad d_y \Psi = -\eta, \quad \text{and} \quad (x, d_x \Psi) =
    \chi(y, -d_y \Psi).
  \end{equation}
  Thus, $d_{ Y, \theta, \eta} \phi = 0$ implies that the Lagrangian parametrized
  is
  $$
  \{ (x, d_x \phi ) \} = \{ (x, d_x \Psi) \} = \{ \chi(0, \eta) \}.
  $$
  As $(x, Y, \theta, \eta)$ range over a neighbourhood of
  $(x_0, Y_0, \theta_0, \eta_0)$, the point $(0, \eta)$ ranges over a
  neighbourhood of $\tilde q \in \tilde\Lambda_0$, and therefore $\chi(0, \eta)$
  ranges over a neighbourhood of $q \in \Lambda_0$. This verifies the third
  condition in the Definition. Exactly the same reasoning shows that the fourth
  condition in the Definition is also satisfied. This completes the proof of
  part (i) of the Lemma.

  (ii) Choose an FIO $F$ associated to the canonical relation $\chi$ as above,
  and which is microlocally invertible at $(q, \tilde q)$. Let $F^{-1}$ denote a
  microlocal inverse to $F$. Write $F^{-1}$ with respect to a phase function
  $S(y, x, \omega)$. Then the phase function
  $$
  \Phi = S(y, x, \omega) + \phi(x, \theta, s)
  $$
  parametrizes the model pair $(\tilde \Lambda_0, \tilde \Lambda_1)$ (after we
  homogenize the $x$ variable by changing to the variable $X = x |\omega|$, as
  we did in the proof of part (i)). The proof is the same as in part (i), so we
  omit it. It then suffices to show that an oscillatory integral with phase
  function $\Phi$,
  \begin{equation}
    \int \int_0^\infty e^{i\Phi(y, X, \omega, \theta, s)} a(y, X, \omega,
    \theta, s) \, ds \, dX \, d\theta \, d\omega
    \label{osc-int-S}\end{equation}
  gives an element of $I^m(\tilde \Lambda_0, \tilde \Lambda_1)$, since the
  original oscillatory integral is, modulo $\calC^\infty$ functions, the image
  of \eqref{osc-int-S} by the Fourier integral operator $F$, which by definition
  maps $I^m(\tilde \Lambda_0, \tilde \Lambda_1)$ to
  $I^m( \Lambda_0, \Lambda_1)$. Thus, we have reduced to the case that the
  intersecting pair $( \Lambda_0, \Lambda_1)$ is the model pair
  $( \tilde \Lambda_0, \tilde \Lambda_1)$.

  We now simplify our notation, and assume that $\Phi(y, \theta, s)$ is a
  nondegenerate phase function parametrizing
  $( \tilde \Lambda_0, \tilde \Lambda_1)$ locally near $\tilde q$, with
  $(y_0, \theta_0)$ corresponding to the point $\tilde q$. Here
  $\theta \in \RR^k$, with $k \geq n$. We want to show that
  \begin{equation}
    u = \int \int_0^\infty e^{i\Phi(y, \theta, s)} a(y,  \theta, s) \, ds \,
    d\theta \quad \text{for} \quad a \in S^{m - \frac{k}{2} + \frac{1}{2} +
      \frac{n}{4}}(X \times \bbR_{\geqslant 0}; \RR^k)
    \label{osc-int-Phi}\end{equation}
  is in the space $I^{m}( \tilde \Lambda_0, \tilde \Lambda_1)$. Essentially this
  proof follows that of Proposition 3.2 in \cite{MelUhl}. We first note that
  $\Phi_0(y, \theta) \defeq \Phi(y, \theta, 0)$ parametrizes $\Lambda_0$. We
  have by \cite{HorFIO}*{(3.2.12)} that the rank of
  $d^2_{\theta \theta} \Phi(y_0, \theta_0)$ is $k - n$. By rotating in the
  $\theta$ variables we can arrange that $\theta = (\theta', \theta'')$ with
  $\dim \theta' = n$, $\dim \theta'' = k-n$ and such that
  $d^2_{\theta'' \theta''} \Phi(y_0, \theta_0)$ is nondegenerate. Integrating in
  the $\theta''$ variables and applying the stationary phase expansion, as in
  \cite{HorFIO}*{p.~142}, we find that the result takes the form
  \begin{equation}
    u = \int \int_0^\infty e^{i\Phi(y, \theta', \theta''(y, \theta', s), s)}
    \tilde a(y,  \theta', s) \, ds \, d\theta'' \quad \text{for} \quad \tilde a
    \in S^{{m - \frac{k}{2} + \frac{1}{2} +\frac{n}{4}}}
    \label{osc-int-Phi2}\end{equation}
  where $\theta''(y, \theta', s)$ is the critical point, determined by the
  equation
  $$
  d_{\theta''} \phi(y, \theta', \theta'', s) = 0;
  $$
  this varies smoothly with $(y, \theta', s)$ near $(y_0, \theta_0, 0)$ thanks
  to the implicit function theorem and the nondegeneracy of
  $d^2_{\theta'' \theta''} \Phi(y_0, \theta_0)$ near $(y_0, \theta_0, 0)$. Then
  the phase function
  $\Phi'_0(y, \theta', 0) \defeq \Phi(y, \theta', \theta''(y, \theta', 0), 0)$
  parametrizes $\tilde \Lambda_0$. Moreover, it has the same number of fibre
  variables as the standard phase function $y \cdot \eta$, and its fibre Hessian,  $d_{\theta' \theta'} \Phi'_0$
  has the same signature (namely, zero) as the fibre Hessian of $y \cdot \eta$.
  By H\"ormander's equivalence of
  phase functions, \cite{HorFIO}*{(3.2.12)}, there is a coordinate
  transformation $\eta = \eta(y, \theta')$ mapping $\Phi'_0$ to $y \cdot \eta$ in a
  neighbourhood of $(y_0, \theta'_0, 0)$. Employing this change of variables, we
  are reduced to the case that $\Phi(y, \theta', s)$ has the form
  $y \cdot \theta' + O(s)$.  We can now follow the proof of Proposition 3.2 in
  \cite{MelUhl} from Equation (3.7) of \cite{MelUhl} to the conclusion, which
  completes the proof of the Lemma.
\end{proof}

We next want to identify the symbols at $\Lambda_0$ and $\Lambda_1$ directly
from the oscillatory integral expression \eqref{intlegm}. Recall that the symbol
on each $\Lambda_i$ is half-density taking values in the Maslov bundle. For our
purposes, it is enough to do this when our Lagrangians $\Lambda_0$ and
$\Lambda_1$ are conormal bundles. In this case, the Maslov bundle is canonically
trivial, which means that we may regard the symbol as being simply a
half-density. In the following theorem, we identify functions on $\Lambda_i$ and
$C_i$, where $C_i$ is given by \eqref{C0bij}, \eqref{C1bij}. We let
$\lambda = (\lambda_1, \dots, \lambda_n)$ be local coordinates on $C_1$, or
equivalently on the Lagrangian $\Lambda_1$. Similarly, we let $\tilde \lambda$
be local coordinates on $C_0$. Notice that we could choose
$\lambda, \tilde \lambda$ to be of the form $\lambda = (\lambda', s)$ and
$\tilde \lambda = (\lambda', d_s \phi)$ where $\lambda'$ are coordinates on
$C_0 \cap C_1$.

\begin{proposition}
  Suppose now that $\Lambda_0$ and $\Lambda_1$ are both the conormal bundle of
  codimension one submanifolds $M_0$ and $M_1$. Then

  (i) The symbol of \eqref{intlegm} at $\Lambda_1$ is given by
  \begin{equation}
    e^{\frac{i\pi \sigma}{4}} a(x, \theta, s) |_{C_1} \Big|
    \frac{ \partial (\lambda, \phi_\theta, \phi_s)}{\partial (x, \theta,
      s)}\Big|^{-\frac{1}{2}} \,  |d\lambda|^{\frac{1}{2}}
  \end{equation}
  where $\sigma$ is the signature of the Hessian
  $\phi''_{(\theta, s)(\theta, s)}$ in the $(\theta, s)$ variables.

  (ii) The symbol of \eqref{intlegm} at $\Lambda_0$ is given by
  \begin{equation}
    (2\pi)^{-\frac{1}{2}} e^{\frac{i\pi \sigma'}{4}} \frac{ i a(x, \theta,
      0)}{\phi_s(x, \theta, 0)} \big|_{C_0}
    \Big| \frac{ \partial (\tilde\lambda, \phi_\theta)}{\partial (x,
      \theta)}\Big|^{-\frac{1}{2}}\, |d \tilde\lambda|^{\frac{1}{2}}
    \label{diff-pr-symbol}  \end{equation}
  where $\sigma$ is the signature of the Hessian $\phi''_{\theta\theta}$ at $s=0$.
\end{proposition}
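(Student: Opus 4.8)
The plan is to reduce everything to the model case and then extract the symbols by applying stationary phase in the fibre variables, treating the two Lagrangians separately. For the symbol at $\Lambda_1$, I would microlocalize away from $\Lambda_0$; there the oscillatory integral \eqref{intlegm} is an ordinary Lagrangian distribution of order $m$ parametrized by the phase $\phi(x,\theta,s)$ with fibre variables $(\theta,s)\in\RR^k\times\RR$ (here $s$ is no longer at the boundary, so it is just an ordinary fibre variable). The standard formula for the principal symbol of a Lagrangian distribution given by a phase function (H\"ormander's formula, \cite{HorFIO}*{Theorem 3.2.6 and the discussion of the symbol}) then applies directly: one pulls the amplitude back to the critical manifold $C_1$, divides by the square root of the Jacobian $|\partial(\lambda,\phi_\theta,\phi_s)/\partial(x,\theta,s)|$ that expresses the change from the fibre/base coordinates to coordinates $(\lambda,\phi_\theta,\phi_s)$ adapted to $C_1$ and the graph, and multiplies by the Maslov factor $e^{i\pi\sigma/4}$, where $\sigma=\operatorname{sgn}\phi''_{(\theta,s)(\theta,s)}$. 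Since $\Lambda_1=N^*M_1$ is a conormal bundle the Maslov bundle is trivial, so this half-density on $C_1\cong\Lambda_1$ is literally the symbol; the normalization constant $(2\pi)^{-k/2-n/4-1/2}$ in \eqref{intlegm} together with the symbol order $m-k/2+1/2+n/4$ of $a$ is exactly arranged so that the output is a symbol of the correct order $m$ on $\Lambda_1$, which gives part (i).

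For part (ii), the symbol at $\Lambda_0$, I would microlocalize away from $\Lambda_1$ and do the $s$-integral explicitly. Near a point of $\Lambda_0\setminus\Lambda_1$ one has $\phi_s(x,\theta,0)\neq 0$ — this is precisely the content of the third and fourth conditions in Definition~\ref{def:intLeg-param}, which force $d_s\phi$ to be a non-vanishing function that serves as the extra coordinate distinguishing $\Lambda_1$ from $\Lambda_0$. Writing $\int_0^\infty e^{i\phi(x,\theta,s)}a(x,\theta,s)\,ds$ and integrating by parts / using the non-stationarity in $s$, the leading contribution as the frequency $|\theta|\to\infty$ comes from the endpoint $s=0$, and is
\[
\frac{i\,a(x,\theta,0)}{\phi_s(x,\theta,0)}\,e^{i\phi(x,\theta,0)} + O(|\theta|^{-1}\cdot\text{(one order lower)}),
\]
by the standard stationary-phase-with-boundary expansion (a single integration by parts in $s$: $\int_0^\infty e^{i\phi}a\,ds = [e^{i\phi}a/(i\phi_s)]_0^\infty - \int_0^\infty e^{i\phi}\partial_s(a/(i\phi_s))\,ds$, the boundary term at $\infty$ vanishing by compact support). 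This reduces \eqref{intlegm}, modulo a smoother distribution and modulo contributions microlocalized at $\Lambda_1$, to the ordinary oscillatory integral $(2\pi)^{-k/2-n/4-1/2}\int e^{i\phi(x,\theta,0)}\,\frac{i\,a(x,\theta,0)}{\phi_s(x,\theta,0)}\,d\theta\,|dx|^{1/2}$ with phase $\phi_0(x,\theta):=\phi(x,\theta,0)$, which by the third condition of Definition~\ref{def:intLeg-param} parametrizes $\Lambda_0$. Applying H\"ormander's symbol formula once more to this integral — amplitude restricted to $C_0=\{d_\theta\phi_0=0\}$, divided by $|\partial(\tilde\lambda,\phi_\theta)/\partial(x,\theta)|^{1/2}$, times $e^{i\pi\sigma'/4}$ with $\sigma'=\operatorname{sgn}\phi''_{\theta\theta}|_{s=0}$ — yields the stated expression, the leftover $(2\pi)^{-1/2}$ being the mismatch between the normalization $(2\pi)^{-k/2-n/4-1/2}$ appropriate to an integral over $\RR^k$ with the half-boundary variable $s$, and the normalization $(2\pi)^{-k/2-n/4}$ appropriate to the pure $\RR^k$ integral defining an order-$(m-1/2)$ distribution on $\Lambda_0$. (One checks the orders match: $a(x,\theta,0)/\phi_s$ has the same order $m-k/2+1/2+n/4$ as $a$, which is the order appropriate for an element of $I^{m-1/2}(\Lambda_0)$ with $k$ fibre variables, consistent with Definition~\ref{def:ILD}.)

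The main obstacle, and the step requiring genuine care rather than bookkeeping, is justifying that the endpoint term at $s=0$ genuinely gives the principal symbol at $\Lambda_0$ and that the error terms are controlled uniformly — in particular that the remaining $\int_0^\infty e^{i\phi}\partial_s(a/(i\phi_s))\,ds$ and the contributions from large $s$ (where $\phi_s$ could in principle vanish away from the relevant conic neighbourhood) are either smoothing or of strictly lower order on $\Lambda_0$, and do not secretly contribute at $\Lambda_1$ in a way that contaminates part (i). This is handled by first cutting off to a small conic neighbourhood of $q$ where $\phi_s$ is bounded away from zero on $\{s\ge 0\}$ (possible since $\phi_s(x_0,\theta_0,0)\ne 0$ on $\Lambda_0\setminus\Lambda_1$ and $\phi$ is homogeneous), absorbing the rest into the $\calC^\infty$ and $I^m(\Lambda_1\setminus\Lambda_0)$ pieces of the decomposition in Definition~\ref{def:ILD}; the homogeneity of $\phi$ and $a$ then makes the single integration by parts in $s$ exact at the level of symbols. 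A secondary technical point is the compatibility of the two Maslov/Jacobian normalizations with the splitting $\lambda=(\lambda',s)$, $\tilde\lambda=(\lambda',\phi_s)$ noted before the proposition — but this is exactly the identity $|\partial(\lambda,\phi_\theta,\phi_s)/\partial(x,\theta,s)| = |\partial(\tilde\lambda,\phi_\theta)/\partial(x,\theta)|\cdot|\partial_s\phi_s / \partial_s(\text{something})|$ type relation, which unwinds to the factor $\phi_s$ appearing downstairs in \eqref{diff-pr-symbol}, and I would verify it by a direct determinant computation using that $C_1$ is transverse to $\{s=0\}$.
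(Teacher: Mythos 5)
Your proposal is correct and is essentially the paper's (unstated) argument: the paper disposes of this proposition with the single remark that it ``follows directly from \cite{HorFIO}*{Section 3}'', and your two steps --- applying H\"ormander's symbol formula with $(\theta,s)$ as fibre variables after microlocalizing away from $\Lambda_0$, and, away from $\Lambda_1$, performing one boundary integration by parts in $s$ to extract the amplitude $i a(x,\theta,0)/\phi_s(x,\theta,0)$ for the $k$-variable phase $\phi(x,\theta,0)$ parametrizing $\Lambda_0$, with the $(2\pi)^{-\frac{1}{2}}$ in \eqref{diff-pr-symbol} accounted for by the normalization mismatch between \eqref{intlegm} and the standard $k$-fibre-variable convention --- are exactly the standard unwinding of that citation. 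The only slip is in your parenthetical order check: since $\phi_s$ is homogeneous of degree $1$ in $\theta$, the amplitude $a(x,\theta,0)/\phi_s$ has order $m-\frac{k}{2}-\frac{1}{2}+\frac{n}{4}$, i.e.\ one order \emph{lower} than $a$, and it is precisely this drop that makes the contribution at $\Lambda_0$ of order $m-\frac{1}{2}$; this does not affect the symbol formulas themselves.
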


\begin{remark} We remark that $\sigma$ and $\sigma'$ are constant, as follows
  from \cite{HorFIO}*{(3.2.10)} by comparing with the standard parametrization
  of a conormal bundle with linear phase function.
\end{remark}

This proposition follows directly from \cite{HorFIO}*{Section 3}.

\subsection{Four intersecting Lagrangians}\label{subsec:4il}
The wave kernel after two diffractions is associated to four different
Lagrangian submanifolds: the direct front, one front from a diffraction with
each cone point, and a fourth front from diffractions with both cone
points. We shall show that the wave kernel in this case is contained
in the Melrose-Uhlmann calculus of distributions associated to four Lagrangian
distributions described in \cite{MelUhl}*{Sections 7--10}.  We now recall some
of this material, starting with the definition of a system of intersecting
Lagrangian submanifolds.

\begin{definition}\label{def:4ILS}
  A system of four intersecting conic Lagrangian submanifolds of $T^* X$ is a
  quadruple $\bmLambda = (\Lambda_0, \Lambda_1, \Lambda_2, \Lambda_3)$ of Lagrangian
  submanifolds, where $\Lambda_1$ and $\Lambda_2$ are manifolds with boundary
  and $\Lambda_2$ is a manifold with codimension two corner, with the following
  properties:
  \begin{itemize}
  \item $(\Lambda_0, \Lambda_1)$ and $(\Lambda_0, \Lambda_2)$ are intersecting
    pairs in the sense of the previous subsection;
  \item
    $\Lambda_1 \cap \Lambda_2 = \partial\Lambda_1 \cap \partial\Lambda_2 =
    \Lambda_0 \cap \Lambda_3 = c\Lambda_3$,
    where $c\Lambda_3$ denotes the codimension 2 corner of $\Lambda_3$;
  \item The two boundary hypersurfaces of $\Lambda_3$ are
    $\Lambda_3 \cap \Lambda_1$ and $\Lambda_3 \cap \Lambda_2$.
  \end{itemize}
\end{definition}

For example, the following is a system of intersecting Lagrangian submanifolds:

\begin{definition}
  Suppose $n \geqslant 3$.  For $j = 0,\ldots,3$, define $\tilde \bmLambda = (\tilde \Lambda_0,
  \tilde \Lambda_1, \tilde \Lambda_2, \tilde \Lambda_3)$ to be the following
  quadruple of Lagrangian submanifolds of $T^*\RR^n$:
  \begin{equation}
    \begin{aligned}
      \tilde \Lambda_0 &= \{ (x, \xi) : x = 0 \} \\
      \tilde \Lambda_1 &= \{ (x, \xi) : x_1 \geq 0, x_2 = \dots = x_n = 0,
      \xi_1 = 0 \} \\
      \tilde \Lambda_2 &= \{ (x, \xi) : x_2 \geq 0, x_1 = x_3 = \dots = x_n
      = 0, \xi_2 = 0 \} \\
      \tilde \Lambda_3 &= \{ (x, \xi) : x_1 \geq 0, x_2 \geq 0, x_3 = \dots x_n
      = 0, \xi_1 = \xi_2 = 0 \}.
    \end{aligned}
    \label{model-system}
  \end{equation}
  \end{definition}

  Locally, an intersecting system as in Definition~\ref{def:4ILS} may be realized as follows.
 Let $\Lambda_0$ be a Lagrangian submanifold, and let $p_1$, $p_2$ be two functions on $T^* X$ such
the Hamilton vector fields $H_{p_1}$, $H_{p_2}$ are linearly independent,
transverse to $\Lambda_0$, and commute with each other.  Then we define
$\Lambda_i$, $i = 1, 2$ to be the flowout from $\Lambda_0 \cap \{ p_i = 0 \}$ by
$H_{p_i}$, and $\Lambda_3$ to be the flowout from
$\Lambda_0 \cap \{ p_1 = p_2 = 0 \}$ by the flowout of both Hamilton vector
fields. For example, the model system is of this form, where $p_1 = \xi_1$ and $p_2 = \xi_2$.
It turns out that, locally,  all intersecting systems arise in this way. As a
consequence, every system of four intersecting Lagrangian submanifolds is the
image of a model system under a homogeneous canonical transformation. We now
define the model system. That is, one could alternatively define an intersecting system
by the requirement that, locally, it is the same of the model system under a
 homogeneous canonical transformation.

We next define the space of Lagrangian distributions associated to the model
intersecting system $\tilde \bmLambda$ given by \eqref{model-system}.

\begin{definition}[{\cite{MelUhl}*{Definition 8.1}}]
  \label{def:mod-Lag-dist-sys}
  The space $I^m_\upc(\RR^n; \tilde \bmLambda)$ consists of those distributional
  half-densities that can be expressed in the form
  \begin{equation}
    (2\pi)^{-n-1} \int \int_0^\infty \int_0^\infty e^{i(x \cdot \xi - s_1 \xi_1
      - s_2 \xi_2)}
    a(x, \xi, s_1, s_2) \, ds_1 \, ds_2 \, d\xi \ |dx|^{\frac{1}{2}}
  \end{equation}
  where $a$ is smooth and compactly supported in $(x, s_1, s_2)$ and is a symbol
  of order $m + 1 - \frac{n}{4}$ in the $\xi$-variables.
\end{definition}

It is not hard to check that if $u \in I^m_\upc(\RR^n; \tilde \bmLambda)$ then
the wavefront set is of $u$ is contained in
$\bigcup_{i = 0}^3 \tilde \Lambda_i$, and if $q \in \tilde \Lambda_i$ is not
contained in $\tilde \Lambda_j$ for $j \neq i$, then $u$ is a Lagrangian
distribution associated to $\tilde \Lambda_i$ microlocally near $q$, of order
$m$ if $i=2$, $m - \frac{1}{2}$ if $i = 1$ or $2$ and $m-1$ if $i=0$. We can
also observe that if $u$ is microsupported near
$\tilde\Lambda_i \cap \tilde\Lambda_j$, $i < j$, and away from the other
$\tilde\Lambda_k$, then it is an intersecting pair of order $m-\frac{1}{2}$
associated to $(\tilde\Lambda_i, \tilde\Lambda_j)$ for $(i,j) = (0,1)$ or
$(0,2)$, or of order $m$ for $(i,j) = (1,3)$ or $(2,3)$.

It is shown in \cite{MelUhl} that the model space
$I^m_\upc(\RR^n; \tilde \bmLambda)$ is invariant under FIOs that map each
$\tilde \Lambda_i$ to itself. As a consequence, we can define intersecting
Lagrangian distributions associated to a general intersecting system
$\bmLambda = (\Lambda_0, \Lambda_1, \Lambda_2, \Lambda_3)$.


\begin{definition}[{\cite{MelUhl}*{Definition 8.7}}]
  \label{def:Leg-dist-sys}
  Let $\bmLambda$ be an intersecting system of homogeneous Lagrangian
  submanifolds of $T^*X$. The space $I^m(X; \bmLambda)$ consists of those
  distributional half-densities $u$ that can be written as a locally finite sum
  $$
  u = u_{01} + u_{02} + u_{13} + u_{23} + \sum_i F_i(v_i),
  $$
  where $u_{ij} \in I^{m-\frac{1}{2}}(X; \Lambda_i, \Lambda_j)$ for
  $(i,j) = (0,1)$ or $(0,2)$, $u_{ij} \in I^{m}(X; \Lambda_i, \Lambda_j)$ for
  $(i,j) = (1,3)$ or $(2,3)$, $F_i$ are FIOs mapping the model intersecting
  system $\tilde \bmLambda$ to $\bmLambda$, and
  $v_i \in I^m(\RR^n; \tilde \bmLambda)$.
\end{definition}

As before, we will often omit the space `$X$' from the notation for these spaces
of distributions.

We will find it useful to have a definition of $I^m(\bmLambda)$ defined directly
in terms of phase functions. To this end we give an analogue of
Proposition~\ref{prop:int-pair-phasefunction} in the setting of intersecting
systems. We first need a definition of a phase function parametrizing an
intersecting system $\bmLambda$, locally near a point $q \in \bmLambda$. Notice
that either $q$ is in only one of the $\Lambda_i$; or in one of the four-fold intersections
$\Lambda_0 \cap \Lambda_1$, $\Lambda_0 \cap \Lambda_2$, $\Lambda_1 \cap \Lambda_3$, or $\Lambda_2 \cap \Lambda_3$, and disjoint from the other two;    or in the 4-fold
intersection $\bigcap_{i=0}^3 \Lambda_i$. Since these four pairs $(\Lambda_i, \Lambda_j)$
 form intersecting pairs of Lagrangian submanifolds in
the sense of the previous subsection, the only case in which we have not already
defined a local parametrization is in the case that
$q \in \bigcap_{i=0}^3 \Lambda_i$.

\begin{definition}
  \label{def:Leg-sys-param}
  Let $\bmLambda = (\Lambda_0, \Lambda_1, \Lambda_2, \Lambda_3)$ be a system of
  intersecting Lagrangian submanifolds, and choose a point
  $q \in \bigcap_{i=0}^3 \Lambda_i$ in their intersection.  We say that $\phi$
  is a local parametrization of $\bmLambda$ near $q$ if it is a function
  $\phi(x, \theta, s_1, s_2)$, defined in a neighbourhood of
  $(x_0, \theta_0, 0,0) \subseteq M \times (\RR^k \setminus \{ 0 \}) \times
  \bbR_{\geqslant 0} \times \bbR_{\geqslant 0}$
  and homogeneous of degree 1 in $\theta$ such that
  \begin{itemize}
  \item $d_{\theta, s_1, s_2} \phi(x_0, \theta_0, 0,0) = 0$, and
    $q = (x_0, d_x \phi(x_0, \theta_0, 0,0))$;
  \item the differentials
    \begin{equation}\label{difflinind}
      d_{x, \theta} \bigg( \frac{\partial \phi}{\partial \theta_i} \bigg), \quad
      d_{x, \theta} \bigg( \frac{\partial \phi}{\partial s_1} \bigg), \quad
      \text{and} \quad d_{x, \theta} \bigg( \frac{\partial \phi}{\partial s_2} \bigg)
    \end{equation}
    in the $(x, \theta)$ directions are linearly independent at
    $(x_0, \theta_0, 0,0)$;
  \item the map
    \begin{equation}
      C_0 \defeq \{ (x, \theta) : d_\theta \phi(x, \theta, 0,0) = 0 \} \mapsto
      \{ (x, d_x \phi(x, \theta, 0,0)) \} \subseteq T^* X
      \label{C04bij}\end{equation}
    is a local diffeomorphism from $C_0$ onto a neighbourhood of $q$ in
    $\Lambda_0$;
  \item the map
    $$
    C_1 \defeq \{ (x, \theta, s_1) : d_{\theta,s_1} \phi(x, \theta, s_1, 0) = 0,
    \ s_1 \geq 0 \} \mapsto \{ (x, d_x \phi(x, \theta, s_1, 0)) \} \subseteq T^*
    X
    $$
    is a local diffeomorphism from $C_1$ onto a neighbourhood of $q$ in
    $\Lambda_1$;
  \item the map
    $$
    C_2 \defeq \{ (x, \theta, s_2) : d_{\theta,s_2} \phi(x, \theta, 0, s_2) = 0,
    \ s_2 \geq 0 \} \mapsto \{ (x, d_x \phi(x, \theta, 0, s_2)) \} \subseteq T^*
    X
    $$
    is a local diffeomorphism from $C_2$ onto a neighbourhood of $q$ in
    $\Lambda_2$;
  \item the map
    \begin{multline*}
      C_3 \defeq \{ (x, \theta, s_1, s_2) : d_{\theta, s_1, s_2} \phi(x, \theta,
      s_1, s_2) = 0, \ s_1 \geq 0, s_2 \geq 0 \} \\
      \mbox{} \mapsto \{ (x, d_x \phi(x, \theta, s_1, s_2)) \}
    \end{multline*}
    is a local diffeomorphism from $C_3$ onto a neighbourhood of $q$ in
    $\Lambda_3$.
  \end{itemize}
\end{definition}

\begin{proposition}
  \label{prop:int-sys-phasefunction}
  (i) Let
  $\bmLambda = (\Lambda_0, \Lambda_1, \Lambda_2, \Lambda_3) \subseteq T^* X$ be
  a system of intersecting Lagrangian submanifolds, and let $q$ be a point in
  the intersection. Then there exists a local parametrization of $\bmLambda$
  near $q$.

  (ii) Let $\phi$, defined in a neighbourhood $U$ of $(x_0, \theta_0, 0, 0)$ be
  a local parametrization of $\bmLambda$ near $q$. Let $a(x, \theta, s_1, s_2)$
  be a classical symbol of order $m - \frac{k}{2} + 1 + \frac{n}{4}$ in the
  $\theta$-variables which is compactly supported in $U$.  Then the oscillatory
  integral
  \begin{equation}
    (2\pi)^{-\frac{k}{2}-\frac{n}{4}-\frac{1}{2}} \int \int_0^\infty
    \int_0^\infty e^{i\phi(x, \theta,
      s)} a(x, \theta,
    s) \, ds_1 \, ds_2  \, d\theta \, |dx|^{\frac{1}{2}}
    \label{intleg4m}\end{equation}
  is in $I^m(\bmLambda)$.
\end{proposition}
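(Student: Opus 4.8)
The plan is to follow the template already established by Proposition~\ref{prop:int-pair-phasefunction}, adapting each step from the two-Lagrangian setting to the four-Lagrangian intersecting system. We want to show that an oscillatory integral \eqref{intleg4m} with phase $\phi$ parametrizing $\bmLambda$ and an appropriate symbol gives an element of $I^m(\bmLambda)$. By Definition~\ref{def:Leg-dist-sys} this means exhibiting the integral, modulo smooth terms, as an FIO applied to an element of the model space $I^m_{\upc}(\RR^n; \tilde\bmLambda)$ of Definition~\ref{def:mod-Lag-dist-sys} (since the $u_{ij}$ pieces are already handled by the two-Lagrangian case whenever the microsupport avoids the full corner). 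So part~(ii) reduces, exactly as in Proposition~\ref{prop:int-pair-phasefunction}(ii), to the model case: it suffices to prove that if $\Phi(y,\theta,s_1,s_2)$ is a nondegenerate phase function parametrizing $\tilde\bmLambda$ near $\tilde q$, then the oscillatory integral $\int\int_0^\infty\int_0^\infty e^{i\Phi} a\, ds_1\, ds_2\, d\theta$ lies in $I^m_{\upc}(\RR^n;\tilde\bmLambda)$ for $a$ of the appropriate order.

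For part~(i), the existence of a local parametrization, I would mimic the construction in the proof of Proposition~\ref{prop:int-pair-phasefunction}(i): start from the homogeneous canonical transformation $\chi$ (guaranteed by \cite{MelUhl}*{Sections 7--10}) carrying the model system $\tilde\bmLambda$ to $\bmLambda$ and sending $\tilde q$ to $q$, let $\Psi(x,y,\theta)$ parametrize the twisted graph of $\chi$, and form the sum
\[
\Psi(x,y,\theta) + y\cdot\eta - \eta_1 s_1 - \eta_2 s_2,
\]
where the last three terms are the standard parametrization of the model system from Definition~\ref{def:mod-Lag-dist-sys}. Homogenizing the $y$ variable by setting $Y = |\theta| y$ and writing $\phi(x,Y,\theta,\eta,s_1,s_2) = \Psi(x, Y/|\theta|, \theta) + (Y/|\theta|)\cdot\eta - \eta_1 s_1 - \eta_2 s_2$, one checks the four-Lagrangian analogue of the conditions in Definition~\ref{def:Leg-sys-param}. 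The nondegeneracy check \eqref{difflinind} splits, just as before, into the $\Psi$-part (the differentials $d_{x,\theta}(\partial_{\theta_i}\Psi)$ and $d_{x,\theta}(\partial_{y_j}\Psi)$ are independent because $\Psi$ parametrizes a canonical graph) and the model-part (the differentials $d_{Y,\eta}(\partial_{\eta_i}\phi)$, $d_{Y,\eta}(\partial_{s_1}\phi)$, $d_{Y,\eta}(\partial_{s_2}\phi)$ are independent by inspection of the linear model phase). Verifying the $C_0, C_1, C_2, C_3$ diffeomorphism conditions is then routine: setting the appropriate subset of $d_{\theta,s_1,s_2}\phi$ to zero forces $y=0$, $d_y\Psi = -\eta$, and the parametrized Lagrangian at $q$ to be the image under $\chi$ of the corresponding piece of $\tilde\bmLambda$.

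For the model reduction in part~(ii), I would follow the proof of Proposition~3.2 in \cite{MelUhl} together with the argument of Proposition~\ref{prop:int-pair-phasefunction}(ii). Restricting $\Phi$ to $s_1 = s_2 = 0$ gives a phase function $\Phi_0(y,\theta)$ parametrizing $\tilde\Lambda_0$; by H\"ormander's equivalence of phase functions \cite{HorFIO}*{(3.2.12)} one reduces the number of fibre variables so that $\Phi_0(y,\theta') = y\cdot\theta'$ after a change of variables, leaving $\Phi(y,\theta',s_1,s_2) = y\cdot\theta' + O(s_1) + O(s_2)$. One then runs the Melrose--Uhlmann normal-form argument (their Equation~(3.7) onward), now with two nonnegative integration parameters $s_1, s_2$ instead of one. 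I expect the main obstacle to be exactly this last step: handling the product structure of the two half-line integrations and checking that the successive changes of variables needed to bring $\Phi$ into the exact model form $x\cdot\xi - s_1\xi_1 - s_2\xi_2$ can be carried out \emph{simultaneously} and compatibly with the corner structure of $\Lambda_3$ (so that the boundary hypersurfaces $\Lambda_3 \cap \Lambda_1$ and $\Lambda_3 \cap \Lambda_2$ are preserved). One must be careful that the reduction of fibre variables done to normalize $\Phi_0$ does not destroy the $O(s_i)$ structure, and that the symbol orders bookkeeping ($m - \frac{k}{2} + 1 + \frac{n}{4}$ in $\theta$, matching $m + 1 - \frac{n}{4}$ in $\xi$ after the reduction) comes out correctly under the stationary-phase eliminations; these are the same kinds of computations as in the two-Lagrangian case and in \cite{MelUhl}*{Section 8}, so I would cite those rather than redo them in full.
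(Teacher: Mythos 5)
Your proposal is correct and matches the paper's intended argument: the paper proves Proposition~\ref{prop:int-sys-phasefunction} simply by stating that it is proved in the same way as Proposition~\ref{prop:int-pair-phasefunction}, which is exactly the adaptation (canonical transformation to the model system, summed phase with two parameters $s_1,s_2$, nondegeneracy check, stationary-phase elimination of excess fibre variables, H\"ormander equivalence, and the Melrose--Uhlmann normal-form argument) that you carry out. You have in fact supplied more detail than the paper does, and the points you flag as delicate are handled by citing \cite{MelUhl} just as the paper implicitly does.
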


\begin{proof} The Proposition is proved in the same way as
  Proposition~\ref{prop:int-pair-phasefunction}.
\end{proof}

\begin{remark}\label{rem:4ILS}
  For a given phase function $\phi(x, \theta, s_1, s_2)$ to parametrize
  \emph{some} system of four intersecting Lagrangian submanifolds, locally near
  $(x_0, \theta_0, 0, 0)$, it is sufficient that it satisfies
  $d_{\theta, s_1, s_2}\phi(x_0, \theta_0, 0, 0) = 0$ and condition
  \eqref{difflinind}. Then the sets $\Lambda_i$, $i = 0 \dots 3$, defined as the
  image of $C_i$ in Definition~\ref{def:Leg-sys-param}, are automatically
  Lagrangian submanifolds satisfying the geometric conditions to form a system
  in the sense of Definition~\ref{def:4ILS}.
\end{remark}

We next write down an expression for the symbol of the oscillatory integral
\eqref{intleg4m} at $\Lambda_0$. As in the previous section, we restrict to
conormal bundles, in which case the Maslov bundle is canonically trivial. We
write $\lambda = (\lambda_1, \dots, \lambda_n)$ for coordinates on $C_0$ which
we identify with $\Lambda_0$ via \eqref{C04bij}.

\begin{proposition}\label{prop:symbol-4}
  Using the notation of \eqref{intleg4m}, suppose now that $\Lambda_0$ is the
  conormal bundle of a codimension one submanifold $M_0$ and $M_1$. Then the
  symbol of \eqref{intleg4m} at $\Lambda_0$ is given by
  \begin{equation}
    - (2\pi)^{-1} e^{\frac{i\pi \sigma}{4}} \left. \left[ \frac{  a(x, \theta,
          0,0)}{\phi_{s_1}(x, \theta, 0, 0)\phi_{s_2}(x, \theta, 0, 0)} \right]
    \right|_{C_0}
    |d\lambda|^{\frac{1}{2}} \left| \frac{ \partial (\lambda,
        \phi_\theta)}{\partial (x, \theta)}\right|^{-\frac{1}{2}}
    \label{symbol-Lambda04} \end{equation}
  where $\sigma$ is the signature of the Hessian $\phi''_{\theta\theta}$ at $s_1 = s_2=0$, and $(\lambda', \phi_{s_1}, \phi_{s_2})$ are local coordinates on $C_0$.
\end{proposition}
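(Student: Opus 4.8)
The plan is to obtain \eqref{symbol-Lambda04} from the intersecting-pair symbol formula \eqref{diff-pr-symbol}, applied in turn with respect to the two boundary defining variables $s_1$ and $s_2$ — so that Proposition~\ref{prop:symbol-4} stands to \eqref{diff-pr-symbol} exactly as Proposition~\ref{prop:int-sys-phasefunction} stands to Proposition~\ref{prop:int-pair-phasefunction}. Fix a point $q \in \Lambda_0$ lying off $\Lambda_1 \cup \Lambda_2 \cup \Lambda_3$ and microlocalize \eqref{intleg4m} near $q$. By Definition~\ref{def:Leg-sys-param}, $\Lambda_1$ and $\Lambda_3$ are images of the set where $\phi_{s_1} = 0$, and $\Lambda_2, \Lambda_3$ of the set where $\phi_{s_2} = 0$; hence the contributions to the microlocalized integral coming from regions where $\phi_{s_1}$ or $\phi_{s_2}$ is small would show up only on $\Lambda_1 \cup \Lambda_2 \cup \Lambda_3$, and so, after a microlocal cutoff near $q$, we may assume the (homogeneous of degree one in $\theta$) functions $\phi_{s_1}$ and $\phi_{s_2}$ are bounded away from zero on the support of the relevant amplitude, including on the critical set $C_0$ near $q$.

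Next I would integrate by parts in $s_1$, writing $e^{i\phi} = (i\phi_{s_1})^{-1}\partial_{s_1} e^{i\phi}$. The boundary term at $s_1 = \infty$ vanishes since $a$ is compactly supported; the boundary term at $s_1 = 0$ replaces the integrand by $-(i\phi_{s_1})^{-1}a$ evaluated at $s_1 = 0$; and the bulk term $-\int e^{i\phi}\,\partial_{s_1}\!\big((i\phi_{s_1})^{-1}a\big)$ has amplitude one order lower in $\theta$, thanks to homogeneity of $\phi_{s_1}$. Iterating on the bulk term produces amplitudes of arbitrarily negative order in $\theta$, uniformly, so the bulk term does not contribute to the wavefront set near $q$ (this uses $q \notin \Lambda_1 \cup \Lambda_3$). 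The surviving $s_1 = 0$ boundary term is again an oscillatory integral of intersecting-pair type in $s_2$, and since $q \notin \Lambda_2 \cup \Lambda_3$ the same integration by parts applies in $s_2$. The net effect is that, modulo smooth functions near $q$, the integral \eqref{intleg4m} agrees with
\[
(2\pi)^{-\frac{k}{2}-\frac{n}{4}-\frac{1}{2}} \int e^{i\phi(x,\theta,0,0)} \left( -\frac{a(x,\theta,0,0)}{\phi_{s_1}(x,\theta,0,0)\,\phi_{s_2}(x,\theta,0,0)} \right) d\theta \, |dx|^{\frac{1}{2}},
\]
the overall factor $-1 = (1/i)^2$ being the product of the two boundary-term factors.

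This last expression is a standard Lagrangian half-density parametrized by the phase $\phi_0(x,\theta) \defeq \phi(x,\theta,0,0)$ with $k$ fibre variables, whose critical set is precisely $C_0$ of \eqref{C04bij}; its amplitude has order $(m - \frac{k}{2} + 1 + \frac{n}{4}) - 2 = (m-1) + \frac{n}{4} - \frac{k}{2}$, consistent with $u$ being of order $m-1$ on $\Lambda_0$. Since $\Lambda_0 = N^* M_0$ is a conormal bundle the Maslov bundle is canonically trivial, and the principal-symbol formula of \cite{HorFIO}*{Section~3} — the same one that underlies \eqref{diff-pr-symbol} — gives the symbol at $\Lambda_0$ as $e^{i\pi\sigma/4}$ times the restriction of the amplitude to $C_0$, times the half-density $|d\lambda|^{\frac{1}{2}}\,|\partial(\lambda,\phi_\theta)/\partial(x,\theta)|^{-\frac{1}{2}}$, where $\sigma$ is the signature of $\phi''_{\theta\theta}$ on $C_0$, constant by \cite{HorFIO}*{(3.2.10)}. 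Substituting the amplitude $-a/(\phi_{s_1}\phi_{s_2})|_{s_1=s_2=0}$ and collecting the power of $2\pi$ — a routine bookkeeping step, carried out exactly as in the one-variable case leading to \eqref{diff-pr-symbol} — yields \eqref{symbol-Lambda04}.

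The one point that requires genuine care is the claim that the bulk terms produced by the two integrations by parts are microlocally negligible near $q$: one must verify that iterating the integration by parts genuinely gains uniform symbolic decay in $\theta$, which is where both the homogeneity of $\phi_{s_1}, \phi_{s_2}$ and the hypothesis $q \notin \Lambda_1 \cup \Lambda_2 \cup \Lambda_3$ enter, and that the oscillatory integral surviving after the first integration by parts is still of intersecting-pair type with non-vanishing $\phi_{s_2}$. Everything else — the signs, the signature $\sigma$, the Jacobian half-density, and the exact power of $2\pi$ — is bookkeeping identical in spirit to the intersecting-pair case already treated. $\boxop$
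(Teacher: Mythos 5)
Your argument is correct and is essentially the proof the paper leaves implicit: the paper states Proposition~\ref{prop:symbol-4} without proof, justifying even the pair-case formula \eqref{diff-pr-symbol} only by appeal to \cite{HorFIO}*{Section 3}, and your reduction --- taking the boundary terms of the $s_1$- and $s_2$-integrations by parts to get the effective amplitude $-a/(\phi_{s_1}\phi_{s_2})$ at $s_1=s_2=0$ for the phase $\phi(x,\theta,0,0)$ parametrizing $\Lambda_0=N^*M_0$, then applying H\"ormander's symbol formula --- is exactly that standard route, with the sign, the factor $(2\pi)^{-1}$, the signature $\sigma$, and the order count all coming out right. One caveat of wording only: the bulk terms are not microlocally trivial near $q$; each further integration by parts spawns new boundary terms at $s_i=0$, so the bulk/remainder contributions do land on $\Lambda_0$, but at strictly lower order in $\omega$ --- which suffices here, since the proposition concerns only the principal symbol.
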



\section{The microlocal structure of the wave propagator on $C_{4\pi}$}
\label{sec:micro-structure}

We now specialize to the cone $C_{4\pi}$, where we will carry out the actual
analysis of the sine propagator near the singular set.  Let us first pause for a
moment to highlight some features of the cone $C_{4\pi}$.  First, and perhaps
most important, the interior $C_{4\pi}^\circ$ is equivalent to the double
cover of the punctured plane $\bbR^2 \setminus \{ (0,0) \}$.  As a result, the
Schwartz kernel
$\bmE \defeq \calK\!\left[ \frac{\sin(t \sqrt\Delta)}{\sqrt\Delta} \right]$ has
a particularly simple description in this setting (cf.~
\cite{CheTay2}*{p.~448-9}):
\begin{subequations}
  \begin{equation}
    \label{eq:4pi-sine-SK-region-1}
    \bmE(t,r_1,\theta_1;r_2,\theta_2) \equiv 0
  \end{equation}
  when $0 < t < \dist(r_1,\theta_1;r_2,\theta_2)$;
  \begin{equation}
    \label{eq:4pi-sine-SK-region-2}
    \bmE(t,r_1,\theta_1;r_2,\theta_2) = \frac{1}{2\pi} \left[ t^2 - \left( r_1^2
        + r_2^2 - 2 r_1 r_2 \cos(\theta_1 - \theta_2) \right)
    \right]^{-\frac{1}{2}}
  \end{equation}
  when $\dist(r_1,\theta_1;r_2,\theta_2) < t < r_1 + r_2$; and
  \begin{equation}
    \label{eq:4pi-sine-SK-region-3}
    \bmE(t,r_1,\theta_1;r_2,\theta_2) = \frac{1}{4\pi} \left[ t^2 - \left( r_1^2
        + r_2^2 - 2 r_1 r_2 \cos(\theta_1 -
        \theta_2) \right) \right]^{-\frac{1}{2}},
  \end{equation}
\end{subequations}
when $t > r_1 + r_2$.  In particular, the jump discontinuity across the
diffractive front $\{t = r_1 + r_2\}$ is readily apparent on
$C_{4\pi}$.\footnote{Note that \cite{CheTay2}*{eq.~(4.7)} contains a sign error
  that we have corrected here.}  Second, a seemingly incidental fact that will
be important as we continue is that constant vector fields are well-defined on
$C_{4\pi}$ (and indeed any cone with cone angle an integral multiple of $2\pi$,
i.e., the finite-sheeted covering spaces of the punctured plane).

\subsection{The `moving conical point' method}
\label{sec:inhomog-4pi}

Our technique for determining the structure of the wave kernel is the `moving
conical point' method. Given two points $\bmq_1^*$ and $\bmq_2^*$ in
$C^\circ_{4\pi}$, and a positive time $t^*$, we want to determine
$\bmE(t, \bmq_1, \bmq_2)$ for $(t, \bmq_1, \bmq_2)$ in a neighbourhood of
$(t^*, \bmq_1^*, \bmq_2^*)$. To do this, we imagine that we can move the conical
point (that is, the place where the two copies of $\RR^2$ are ramified) along a
straight line, in a direction such that moves it `in between' $\bmq_1^*$ and
$\bmq_2^*$, and then far away (i.e., at a distance $S$ much larger than
$t^*$). This means that the angle between $\bmq_1^*$ and $\bmq_2^*$ tends to
$2\pi$, so the distance between them will be $2S + O(1) \gg t^*$. Then, by
finite propagation speed, after the cone point is so shifted, the wave kernel at
$(t^*, \bmq_1^*, \bmq_2^*)$ will vanish. We then express the kernel using the
fundamental theorem of calculus:
$$
\bmE(t, \bmq_1, \bmq_2) = -\int_0^S \frac{d}{ds} \bmE^s(t, \bmq_1, \bmq_2) \,
ds,
$$
where $\bmE^s(t, \bmq_1, \bmq_2)$ is the wave kernel where the cone point has
been shifted a distance $s$ in our chosen direction. Thus, if we can understand
the derivative of $\bmE^s$ with respect to $s$, then we can compute
$\bmE = \bmE^0$. The reason we can expect the derivative $\frac{d}{ds} \bmE^s$
to be simpler than $\bmE^s$ itself is that the singularity at the \emph{direct}
front is independent of $s$, so $\frac{d}{ds} \bmE^s$ should be associated
purely to \emph{diffractive} behaviour. The rest of this section is devoted to
implementing this method.

To do this in a rigorous manner, rather than moving the cone point, we instead
translate the points $\bmq_1$ and $\bmq_2$ on the cone (in the opposite direction --- see Figure~\ref{fig:moving-conical-point}) using the flow of a
constant vector field $\vec{X} \in \mathcal{V}(C_{4\pi}^\circ)$, which we choose
in a direction such that the two half-lines parallel to $X$ through $\bmq_1$ and
$\bmq_2$ pass on different sides of the cone point; in particular, neither meets
the cone point.

\begin{figure}
  \centering

 \includegraphics{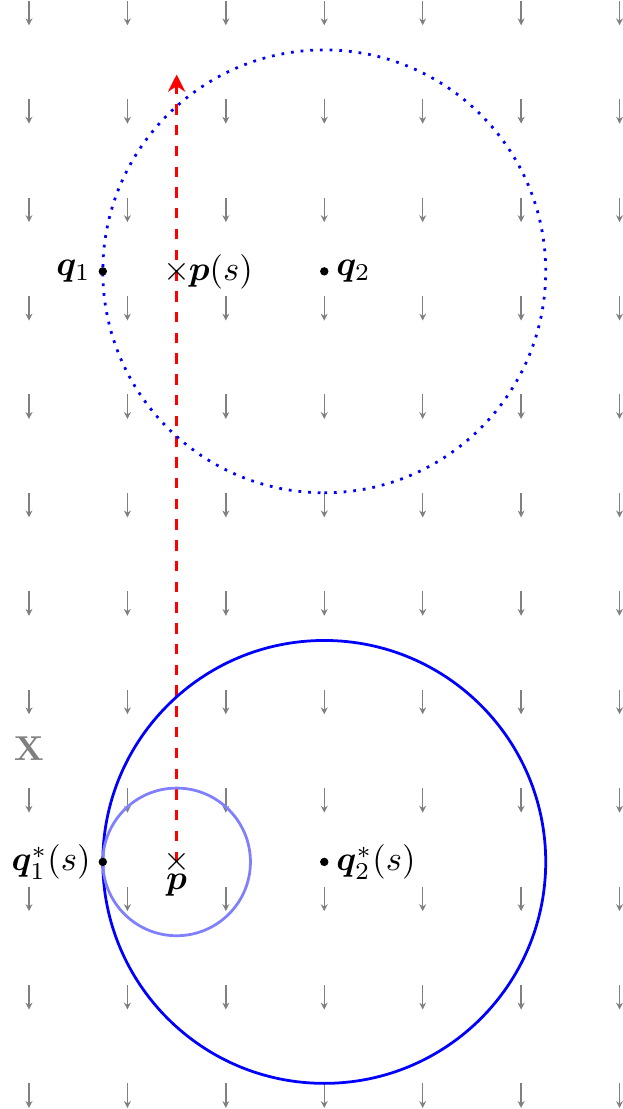}

 \caption{Moving the conical point.  Shown are the singular support of $\bmE^s(t,\bmq_1;\bmq_2) = \bmE(t,\bmq_1(s); \bmq_2(s))$ (solid circles) and the singular support of $\bmE(t,\bmq_1;\bmq_2)$ (dotted circles) as the moving conical point $\bmp(s)$ travels along the flow of $-\vec{X}$; the branch cut is depicted as the red dashed line.}
 \label{fig:moving-conical-point}
\end{figure}

We set $\varphi^s = \varphi^s_{\vec{X}}$ to be the associated flow, the group of
\emph{local}\footnote{The time interval for which $\varphi^s$ is defined depends
  on the starting point; in particular, the points along the reverse flowout of
  $\bmp$ can only be evolved forward for finite time---until they reach $\bmp$.}
diffeomorphisms given by time-$s$ translation along $\vec{X}$.  Using
$\varphi^s$ we assemble the kernel spacetime flow for $\vec{X}$, which is the
group of locally-defined diffeomorphisms $\Phi^s$ on
$\bbR_t \times C_{4\pi}^\circ \times C_{4\pi}^\circ$ given by
\begin{equation*}
  \Phi^s(t,\bmq_1;\bmq_2) \defeq \big( t, \varphi^s(\bmq_1) ; \varphi^s(\bmq_2) \big) =
  \big(t, \bmq_1 + s \, \vec{X} ; \bmq_2 + s \, \vec{X} \big).
\end{equation*}
Consider the distribution
\begin{equation}
  \label{eq:moved-SK}
  \Xi_s\defeq \chi \del_s \! \left[ \big( \Phi^s
    \big){}^* \, \bmE \right] ,
\end{equation}
where
$\chi = \chi(\bmq_1;\bmq_2) \in \calC^\infty(C_{4\pi}^\circ \times
C_{4\pi}^\circ)$
is a smooth function that vanishes near the cone point. Its role is to ensure
that $(\Phi^s)^* \bmE$ is well defined on the support of $\chi$; that is, $\chi$
must be chosen so that it vanishes in the set obtained by translating a small
ball centered at $\bmp$ in the $\vec{X}$ direction, and is identically $1$ in a
neighbourhood of the set
$\{ (\bmq_1 + s \vec{X}, \bmq_2 + s \vec{X}) : s \in \RR, (x_1, x_2) \in U \}$,
where $U$ is a suitably small neighbourhood of $(\bmq_1^*, \bmq_2^*)$. Then for
$(\bmq_1, \bmq_2) \in U$ we have
\begin{equation}
  \label{eq:Xi_s-expansion}
  \Xi_s  =  \chi \cdot
  \del_s \! \left[ \big( \Phi^s \big){}^* \bmE \right] = \del_s \! \left[ \big( \Phi^s \big){}^* \bmE \right] .
\end{equation}

Set
\begin{equation}
  \Upsilon_s(t,\bmq_1;\bmq_2) \defeq \chi(\bmq_1; \bmq_2) \cdot \del_s \! \left[ \big( \Phi^s \big){}^* \bmE \right] \! (t, \bmq_1; \bmq_2);
  \label{eq:Upsilon-defn}\end{equation}
this is the precise version of the quantity $\frac{d}{ds} \bmE^s$ in the
heuristic discussion above.  Thus, we have
\begin{equation}
  \label{eq:W(t)-moving-conical-point-formula}
  \bmE(t,\bmq_1; \bmq_2) = - \int_0^S \Upsilon_s(t, \bmq_1; \bmq_2) \, ds, \quad
   (\bmq_1, \bmq_2 ) \in U,
\end{equation}
provided that $(\Phi^S)^* \bmE(t, \bmq_1; \bmq_2) = 0$ as discussed above.

When $s = 0$, we calculate that
\begin{equation}
  \label{eq:Upsilon_s-pullback-formula}
  \Upsilon_0(t,\bmq_1;\bmq_2) = \vec{X}_1 \bmE(t,\bmq_1;\bmq_2) + \vec{X}_2 \bmE(t,\bmq_1;\bmq_2)
\end{equation}
with $\vec{X}_j$ denoting $\vec{X}$ acting in the $\bmq_j$-variable, and for
general $s$ we have
\begin{equation}
  \label{eq:moved-SK-pullback}
  \Upsilon_s(t,\bmq_1;\bmq_2) = \big( \Phi^s \big){}^*
  \Upsilon_0(t,\bmq_1;\bmq_2) , \quad (\bmq_1, \bmq_2 ) \in U,
\end{equation}
since the vector field $\vec{X}$ is constant.  Pairing $\Upsilon_0$ with a test
function $\psi \in \calC^\infty_\upc(C_{4\pi}^\circ)$ in the $\bmq_2$-variable,
we then integrate by parts to obtain
\begin{equation*}
  \begin{aligned}
    \left< \Upsilon_0, \psi \right>_{\bmq_2} &= \left< \vec{X}_1 \bmE, \psi
    \right>_{\bmq_2} + \left< \vec{X}_2 \bmE, \psi \right>_{\bmq_2}
    \\
    &= \left< \vec{X}_1 \bmE, \psi \right>_{\bmq_2} - \left<
      \bmE, \vec{X} \psi \right>_{\bmq_2} \\
    &= \big(\vec{X} \circ \bfW(t)\big) \psi - \big(\bfW(t) \circ \vec{X}\big)
    \psi \\
    &= \left[ \vec{X} , \bfW(t) \right] \psi .
  \end{aligned}
\end{equation*}
Thus, $\Upsilon_0$ is the Schwartz kernel of the commutator
$\left[ \vec{X}, \bfW(t) \right]$ of the constant vector field with the sine
propagator.  Note this distribution is everywhere well-defined.

A quick computation now yields the operator identity
\begin{equation*}
  \boxop \circ \left[ \vec{X}, \bfW(t) \right] = \left[ \vec{X}, \Delta
  \right] \circ \bfW(t) ,
\end{equation*}
and hence Duhamel's principle implies\footnote{There is a minus sign in the
  formula because our operator $\boxop$ is
  $D_t^2 - \Delta = - \partial_t^2 - \Delta$, while the usual Duhamel formula is
  written for an operator with a positive sign in front of $\partial_t^2$.}
\begin{equation}
  \label{eq:commutator-Duhamel-identity}
  \left[ \vec{X}, \bfW(t) \right] = -\int_{s = 0}^t \bfW(t-s) \circ \left[
    \vec{X}, \Delta \right] \circ \bfW(s) \, ds ,
\end{equation}
where we recall the Schwartz kernel of these operators is $\Upsilon_0$. Using
\eqref{eq:commutator-Duhamel-identity}, we will show that $\Upsilon_0$ is a
multiple of $\delta(t - r_1 - r_2)$, hence a purely diffractive Lagrangian
distribution.  First, we must understand better the commutator
$\left[ \vec{X}, \Delta \right]$.  This is the aim of the next subsection.

\subsection{Distributions supported at the cone point and commutators}
\label{sec:distns-commutators}

To make full use of the expression \eqref{eq:commutator-Duhamel-identity}, we
need to understand explicitly the Schwartz kernel of the commutator
$\left[ \vec{X}, \Delta \right]$.  This requires a brief detour through the
spectral theory of the Laplacian on $C_{4\pi}$, and, in particular, a discussion
of the failure of essential self-adjointness of the Laplace-Beltrami operator
$\Delta_g$ on $C^\circ_{4\pi}$.

Let $H^s(C_{4\pi})$ denote the usual Sobolev spaces on $C_{4\pi}$, defined as
\begin{equation}
  \label{eq:Sobolev-spaces}
  H^k(C_{4\pi}) \defeq \left\{ u \in L^2(C_{4\pi}) : \Diff^k(C_{4\pi}) \cdot u
    \in L^2(C_{4\pi}) \right\}
\end{equation}
for integers $k \in \bbZ_{\geqslant 0}$ and extended to all real orders by
duality and interpolation.  An exercise (essentially the same as a more standard
calculation on $\RR^2$, where the same result holds; cf.~Chapter I.5 of
\cite{AGHKH}) shows that the closure of $\calC^\infty_\upc(C_{4\pi}^\circ)$ in
the graph norm for $\Delta_g$,
\begin{equation*}
  \|u\|_{\Delta_g} \defeq \|u\|_{L^2} + \|\Delta_g u\|_{L^2} ,
\end{equation*}
i.e., the domain of the closure $\overline{\Delta_g}$ of $\Delta_g$, is
\begin{equation}
  \label{eq:closure-domain}
  \overline{\mathfrak{D}} \defeq \Dom\!\left( \overline{\Delta_g} \right) =
  \left\{ u \in H^2(C_{4\pi}) : u(\bmp) = 0 \right\} .
\end{equation}
Thus, if $\rho \in \calC^\infty_\upc(C_{4\pi})$ is any bump function satisfying
$\rho \equiv 1$ for $r \leqslant 1$ and $\rho \equiv 0$ for $r \geqslant 2$,
then this shows
\begin{equation*}
  H^2(C_{4\pi}) = \overline{\mathfrak{D}} \oplus \Span_\bbC\{\rho\}.
\end{equation*}
We show in Lemma~\ref{thm:domain-adjoint-Lap} that the domain of the adjoint of
this operator is
\begin{equation*}
  \overline{\mathfrak{D}}{}^* \defeq \Dom\!\left( \overline{\Delta_g}{}^*
  \right) = \overline{\mathfrak{D}} \oplus
  \Span_\bbC \! \left\{ \rho, \rho \log(r), \rho \, r^\frac{1}{2} \exp\!\left[
      \pm \frac{i}{2} \theta \right], \rho \, r^{-\frac{1}{2}} \exp\!\left[ \pm
      \frac{i}{2} \theta \right]
  \right\} .
\end{equation*}
The choice of a self-adjoint extension of $\overline{\Delta_g}$ is then the
suitable choice of a half-dimensional subspace of
$\overline{\mathfrak{D}}{}^* \big/ \overline{\mathfrak{D}}$ (cf.~\cite{ReeSim2}
for more details on self-adjoint extensions).

In our analysis, we have elected to work with the Friedrichs extension
$\Delta \defeq \Delta_g^{\mathrm{Fr}}$ of the Laplacian, the unique self-adjoint
extension whose domain contains the form domain (which in our setting is
$H^1(C_{4\pi})$).  We define the spaces $\calD_s$ to be the domains of real
powers of this operator:
\begin{equation}
  \label{eq:Lapn-domains}
  \calD_s \defeq \Dom\!\left( \Delta^{\frac{s}{2}} \right) .
\end{equation}
For $s > 1$, these spaces are strictly larger than the Sobolev spaces
$H^s(C_{4\pi})$.  In particular, $\calD_2$ is the Friedrichs domain itself.

To distinguish the elements of $\calD_2$ from those of
$\overline{\mathfrak{D}}$, we must examine their behavior at $\bmp$.  We do so
in the following lemma, which we prove in
Appendix~\ref{sec:proof-asymptotics-cone-point}.

\begin{lemma}
  \label{thm:distr-at-cone-exp}
  Fix a compactly supported, smooth, and radial cutoff
  $\rho \in \calC^\infty_\upc(C_{4\pi})$ which is identically $1$ near $\bmp$.
  For any function $u \in \calD_2$, there exist constants $a_{-1}$, $a_0$, and
  $a_1$ in $\bbC$ and a distribution $v \in \overline{\mathfrak{D}}$ such that
  \begin{equation}
    \label{eq:cone-point-distn-exp}
    u = \left( a_0 + a_{-1} \sqrt{r} \, \exp\!\left[ - \frac{i}{2} \, \theta
      \right] + a_1 \sqrt{r} \, \exp\!\left[ \frac{i}{2} \, \theta \right]
    \right) \rho(r) + v .
  \end{equation}
  In particular, the function
  $u - a_0 - a_{-1} \sqrt{r} \, \exp\!\left[ - \frac{i}{2} \, \theta \right] -
  a_1 \sqrt{r} \, \exp\!\left[ \frac{i}{2} \, \theta \right]$
  vanishes at $\bmp$.
\end{lemma}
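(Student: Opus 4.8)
The plan is to obtain the expansion from two structural facts about the Friedrichs Laplacian $\Delta$: that its domain is contained in the maximal domain $\overline{\mathfrak{D}}{}^*$ of Lemma~\ref{thm:domain-adjoint-Lap}, and that it is contained in the form domain, which in our setting is $H^1(C_{4\pi})$. For the first, since $\Delta$ is a self-adjoint extension of the closed symmetric operator $\overline{\Delta_g}$ we have $\overline{\Delta_g} \subseteq \Delta = \Delta^* \subseteq \overline{\Delta_g}{}^*$, and hence $\calD_2 = \Dom(\Delta) \subseteq \Dom(\overline{\Delta_g}{}^*) = \overline{\mathfrak{D}}{}^*$. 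By Lemma~\ref{thm:domain-adjoint-Lap}, any $u \in \calD_2$ then admits a unique decomposition
\[
  u = v + \Big( a_0 + a_{-1}\sqrt r\, e^{-\frac i2 \theta} + a_1 \sqrt r\, e^{\frac i2 \theta} + c \log r + b_{-1}\, r^{-\frac12} e^{-\frac i2\theta} + b_1\, r^{-\frac12} e^{\frac i2\theta}\Big) \rho(r),
\]
with $v \in \overline{\mathfrak{D}}$ and $a_0, a_{\pm 1}, b_{\pm 1}, c \in \bbC$. It therefore suffices to show $c = b_{-1} = b_1 = 0$, for then $u$ has exactly the claimed form; and the last assertion of the lemma follows at once, since on the set $\{ \rho \equiv 1 \}$ the function $u - a_0 - a_{-1}\sqrt r\, e^{-\frac i2\theta} - a_1 \sqrt r\, e^{\frac i2\theta}$ coincides with $v$, which vanishes at $\bmp$ because $v \in H^2(C_{4\pi}) \hookrightarrow C^0$ and $v(\bmp) = 0$ by \eqref{eq:closure-domain}.

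To eliminate the three singular coefficients I would use that $\calD_2 \subseteq \Dom(Q) \subseteq H^1(C_{4\pi})$; only this inclusion is needed, and it is immediate since the form norm $\sqrt{Q(u) + \|u\|_{L^2}^2}$ is exactly the $H^1(C_{4\pi})$ norm. Since $v \in \overline{\mathfrak{D}} \subseteq H^2 \subseteq H^1$ and the functions $\rho$ and $\rho \sqrt r\, e^{\pm \frac i2 \theta}$ lie in $H^1(C_{4\pi})$ (their gradients are $O(r^{-1/2})$ near $\bmp$, which is square-integrable against $r\,dr$), membership of $u$ in $H^1$ forces
\[
  w \defeq \Big( c \log r + b_{-1}\, r^{-\frac12} e^{-\frac i2\theta} + b_1\, r^{-\frac12} e^{\frac i2\theta}\Big)\rho(r) \in H^1(C_{4\pi}).
\]
Writing $w = \sum_{k \in \{-1,0,1\}} f_k(r)\, e^{\frac i2 k\theta}$ and using that $1, e^{-\frac i2\theta}, e^{\frac i2\theta}$ are mutually orthogonal on $\bbR / 4\pi\bbZ$, one has
\[
  \| \nabla w \|_{L^2(C_{4\pi})}^2 = 4\pi \int_0^\infty \sum_{k} \Big( |f_k'(r)|^2 + \tfrac{k^2}{4 r^2} |f_k(r)|^2 \Big)\, r\, dr .
\]
Near $\bmp$ (where $\rho \equiv 1$) we have $f_0 = c \log r$, so the $k = 0$ summand of the integrand equals $|c|^2 r^{-1}$, which is not integrable near $r = 0$; likewise $f_{\pm 1} = b_{\pm 1} r^{-1/2}$, so the $k = \pm 1$ summands equal $\tfrac12 |b_{\pm 1}|^2 r^{-2}$, again non-integrable near $r = 0$. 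Since $w \in H^1(C_{4\pi})$, each coefficient must vanish, i.e. $c = b_{-1} = b_1 = 0$, completing the proof.

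Granting Lemma~\ref{thm:domain-adjoint-Lap}, there is no serious obstacle: the argument reduces to ``self-adjoint $\Rightarrow$ inside the maximal domain'' together with ``Friedrichs $\Rightarrow$ inside the form domain $H^1$'', followed by the Fourier-mode estimate above. The only point that genuinely uses the two-dimensionality of $X$ — and which I would record as a short preliminary remark — is the identification of the Friedrichs form domain with all of $H^1(C_{4\pi})$ near $\bmp$, i.e. the statement that the cone point has zero $H^1$-capacity, proved by the standard logarithmic cutoff; for the present lemma, though, only the trivial inclusion $\Dom(Q) \subseteq H^1(C_{4\pi})$ is actually used. A more computational alternative, reading the expansion directly off the explicit Bessel-function structure of $(\Delta + 1)^{-1}$ near $\bmp$, is possible but strictly longer, so I would not pursue it.
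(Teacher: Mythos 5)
Your proposal is correct and follows essentially the same route as the paper: both deduce $\calD_2 \subseteq \overline{\mathfrak{D}}{}^*$, invoke Lemma~\ref{thm:domain-adjoint-Lap} for the six-term decomposition modulo $\overline{\mathfrak{D}}$, and then use the Friedrichs form-domain ($H^1$) criterion to eliminate the $\log r$ and $r^{-1/2}e^{\pm i\theta/2}$ terms. The only difference is that you write out the Fourier-mode gradient estimate that the paper leaves implicit in the phrase ``these are the only elements of $\overline{\mathfrak{D}}{}^*/\overline{\mathfrak{D}}$ which are elements of $H^1$'', which is a harmless (and welcome) amplification.
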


\begin{remark}
  We see from Lemma~\ref{thm:distr-at-cone-exp} the system of strict inclusions
  \begin{equation*}
    \overline{\mathfrak{D}} \subsetneq H^2(C_{4\pi}) \subsetneq \calD_2 \subsetneq
    \overline{\mathfrak{D}}{}^* .
  \end{equation*}
\end{remark}

Using this lemma, we see that the Friedrichs extension exactly corresponds to
the choice of the functions
\begin{equation*}
  \varphi_0(r,\theta) \defeq 1, \quad \varphi_{-1}(r,\theta) \defeq \sqrt{r}
  \exp\!\left[ - \frac{i}{2} \, \theta \right], \quad \text{and} \quad
  \varphi_{+1}(r,\theta) \defeq \sqrt{r} \exp\!\left[ \frac{i}{2} \theta \right]
\end{equation*}
as the models for the admissible asymptotics at $\bmp$.  Given a function $u$ in
$\calD_2$, we define the distributions $L_j$ for $j = -1$, $0$, or $+1$ as
\begin{equation}
  \label{eq:cone-point-distn-basis-elts}
  L_j(u) \defeq a_j
\end{equation}
in terms of  the expansion \eqref{eq:cone-point-distn-exp}.  Note that the expansion
\eqref{eq:cone-point-distn-exp} is independent of the choice of the cutoff
$\rho$, for the difference of any two such cutoffs is compactly supported in
$C^\circ_{4\pi}$ and is thus in $\overline{\mathfrak{D}}$.  Hence, the
distributions $L_j$ are well-defined elements of $\overline{\mathfrak{D}}_{-2}$.
Equivalently, we may define the $L_j$'s
using the angular spectral projectors
\begin{equation}
  \label{eq:angular-spectral-projectors}
  \left[\Pi_j u\right]\!(r) \defeq \frac{1}{\sqrt{4\pi}} \int_{\bbR / 4\pi \bbZ}
  u(r,\theta) \, \exp\!\left[ - \frac{i j}{2} \, \theta \right] d\theta ,
\end{equation}
and a straightforward computation shows that
\begin{equation}
  \label{eq:L0pm1}
  L_0(u) = \frac{1}{\sqrt{4\pi}} \lim_{r \downarrow 0} \big[\Pi_0 u\big]
  (r) \quad \text{and} \quad
  L_{\pm 1}(u) = \frac{1}{\sqrt{4\pi}} \lim_{r \downarrow 0} \frac{\big[ \Pi_{\pm 1}
    u \big](r)}{\sqrt{r}} .
\end{equation}
Directly from the definition or from the above, we observe that
$L_{\pm 1}(u) = \overline{L_{\mp 1}(\overline{u})}$.

\begin{corollary}
  \label{thm:distn-cone-point-basis}
  Suppose $L$ is a distribution in $\calD_{-2}$ which is supported only at the
  cone point $\bmp$.  Then $L$ is a linear combination of $L_{-1}$, $L_0$, and
  $L_1$.
\end{corollary}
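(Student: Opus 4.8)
The plan is to read the corollary off directly from the expansion in Lemma~\ref{thm:distr-at-cone-exp}, using the fact recalled earlier that $\overline{\mathfrak{D}} = \Dom(\overline{\Delta_g})$ is the closure of $\calC^\infty_\upc(C_{4\pi}^\circ)$ in the graph norm of $\Delta$ --- equivalently, in the $\calD_2$-norm, since $\Delta = \Delta_g^{\mathrm{Fr}}$ acts as $\Delta_g$ on $\calC^\infty_\upc(C_{4\pi}^\circ)$. First I would pin down what ``supported only at $\bmp$'' means for an element $L$ of the dual space $\calD_{-2} = (\calD_2)^*$: for any open $U \subseteq C_{4\pi}^\circ$ one has $\calC^\infty_\upc(U) \subseteq \calD_2$, the support of $L$ is the complement of the union of those $U$ on which $L$ vanishes, and therefore ``$L$ is supported only at $\bmp$'' is precisely the statement that $L$ annihilates $\calC^\infty_\upc(C_{4\pi}^\circ)$. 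Since $L$ is a continuous linear functional on $\calD_2$ vanishing on the subspace $\calC^\infty_\upc(C_{4\pi}^\circ)$, which is dense in $\overline{\mathfrak{D}}$ for the $\calD_2$-norm, it annihilates all of $\overline{\mathfrak{D}}$.

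Next I would feed an arbitrary $u \in \calD_2$ through Lemma~\ref{thm:distr-at-cone-exp}: fixing the radial cutoff $\rho$, write $u = (a_0\varphi_0 + a_{-1}\varphi_{-1} + a_1\varphi_1)\rho + v$, where $v \in \overline{\mathfrak{D}}$ and $a_j = L_j(u)$. The three model functions $\varphi_j\rho$ themselves lie in $\calD_2$, by the description of the Friedrichs domain given just before the lemma (it is $\overline{\mathfrak{D}}$ plus the span of $\varphi_0\rho$, $\varphi_{-1}\rho$, $\varphi_{+1}\rho$), so the numbers $c_j \defeq L(\varphi_j\rho)$ are well defined; moreover they do not depend on the choice of $\rho$, since two such cutoffs differ by an element of $\calC^\infty_\upc(C_{4\pi}^\circ) \subseteq \overline{\mathfrak{D}}$, on which $L$ vanishes. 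Applying $L$ to the decomposition and using linearity together with $L(v) = 0$ gives, for every $u \in \calD_2$,
\[
  L(u) = a_{-1}\,L(\varphi_{-1}\rho) + a_0\,L(\varphi_0\rho) + a_1\,L(\varphi_1\rho) = c_{-1}L_{-1}(u) + c_0 L_0(u) + c_1 L_1(u),
\]
that is, $L = c_{-1}L_{-1} + c_0 L_0 + c_1 L_1$ in $\calD_{-2}$, which is the assertion.

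I do not anticipate a real obstacle: the corollary is essentially a repackaging of the (three-dimensional) deficiency structure of $\overline{\Delta_g}$ together with Lemma~\ref{thm:distr-at-cone-exp}. The only steps requiring any care are (a) the bookkeeping identification of ``support at $\bmp$'' for an abstract functional in $\calD_{-2}$ with vanishing on $\calC^\infty_\upc(C_{4\pi}^\circ)$, and its subsequent promotion to vanishing on the closure $\overline{\mathfrak{D}}$ by continuity; and (b) verifying that $\varphi_{\pm1}\rho \in \calD_2$ so that the coefficients $c_j$ make sense --- both of which are immediate from the material established just before Lemma~\ref{thm:distr-at-cone-exp}. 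Everything else is routine linear algebra.
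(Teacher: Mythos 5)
Your proposal is correct and follows essentially the same route as the paper: apply the expansion of Lemma~\ref{thm:distr-at-cone-exp} to an arbitrary $u \in \calD_2$, use the support hypothesis to kill the $\overline{\mathfrak{D}}$ remainder, and read off $L = c_{-1}L_{-1} + c_0 L_0 + c_1 L_1$ with $c_j = L(\varphi_j\rho)$. The only difference is that you spell out the step the paper leaves implicit --- that ``supported only at $\bmp$'' means vanishing on $\calC^\infty_\upc(C_{4\pi}^\circ)$, which by density in $\overline{\mathfrak{D}}$ for the graph norm and continuity of $L$ on $\calD_2$ forces $L$ to annihilate all of $\overline{\mathfrak{D}}$ --- together with the well-definedness of the coefficients, both of which are correct refinements rather than deviations.
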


\begin{proof}
  Suppose $u$ is an element of $\calD_2$.  By \eqref{eq:cone-point-distn-exp} we
  have
  \begin{equation*}
    L(u) = a_0 \, L\big(\rho(r)\big) + a_{-1} \, L\big(
    \varphi_{-1}(r,\theta) \cdot \rho(r) \big) + a_{1} \, L\big(
    \varphi_{1}(r,\theta) \cdot \rho(r) \big)
  \end{equation*}
  since $v$ being an element of $\overline{\mathfrak{D}}$ implies $L(v)$
  vanishes.
  Therefore,
  \begin{equation*}
    L = L\big(\rho(r)\big) \cdot L_0 + L\big(
    \varphi_{-1}(r,\theta) \cdot \rho(r) \big) \cdot L_{-1} + L\big(
    \varphi_{1}(r,\theta) \cdot \rho(r) \big) \cdot L_{1},
  \end{equation*}
  showing that $u$ is a linear combination of $L_0$, $L_{-1}$, and $L_1$ as
  claimed.
\end{proof}

Returning to the commutator $\left[\vec{X},\Delta\right]$, let us observe
\begin{equation*}
  \vec{X} : \calD_2 \centernot\To \calD_1 = H^1(C_{4\pi})
\end{equation*}
since $\calD_2$ is not contained in $H^2(C_{4\pi})$. On the other hand, since
$H^1(C_{4\pi})$ is contained in $\calD_2$, we certainly have
$\vec{X} : \calD_2 \to L^2(C_{4\pi})$, and hence, by duality, also
$\vec{X} : L^2(C_{4\pi}) \to \calD_{-2}$. Therefore, for any $u \in \calD_2$,
the commutator $\left[ \vec{X}, \Delta \right]$ is in $\calD_{-2}$.
On the other hand, if $u$ is compactly supported in $C_{4\pi}^\circ$, then the
action of $\Delta$ on $u$ is the same as the Euclidean Laplacian acting on $u$.
Since the Euclidean Laplacian commutes with constant vector fields, this implies
$\left[ \vec{X}, \Delta \right] u = 0$.
Therefore, the distributional support of $\left[ \vec{X}, \Delta\right] u$ for
any $u \in \calD_2$ is at most the cone point $\bmp$, and thus it fits into the
framework of Corollary~\ref{thm:distn-cone-point-basis}.

\begin{proposition}
  \label{thm:commutator-SK}
  Let $\vec{X} = X_w \, \del_w + X_\wbar \, \del_\wbar$ be a constant vector
  field on $C_{4\pi}$, written in terms of the complex coordinate
  $w = x + iy = re^{i\theta}$.  Then for any distribution $u \in \calD_k$ for
  $k \geqslant 2$, we have
  \begin{equation}
    \label{eq:commutator-SK}
    \left[ \vec{X}, \Delta \right] u = -2\pi \big( X_w \, L_1(u) \cdot
    L_{1} + X_\wbar \, L_{-1}(u) \cdot L_{-1} \big)
  \end{equation}
\end{proposition}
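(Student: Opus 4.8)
The plan is to localize the identity at the cone point $\bmp$ and reduce it to a single computation of the Laplacian of the indicial functions $r^{-1/2}e^{\pm i\theta/2}$. As observed just above, $[\vec X,\Delta]u$ is supported at $\bmp$, so by Corollary~\ref{thm:distn-cone-point-basis} it lies in $\Span_\bbC\{L_{-1},L_0,L_1\}$; moreover $[\vec X,\Delta]$ annihilates $\overline{\mathfrak{D}}$. Since Lemma~\ref{thm:distr-at-cone-exp} gives the direct‑sum decomposition $\calD_2=\overline{\mathfrak{D}}\oplus\Span_\bbC\{\rho,\varphi_{-1}\rho,\varphi_1\rho\}$ and each $L_j$ vanishes on $\overline{\mathfrak{D}}$, the map $u\mapsto[\vec X,\Delta]u$ factors through $u\mapsto(L_{-1}(u),L_0(u),L_1(u))$; hence it suffices to evaluate $[\vec X,\Delta](\varphi_j\rho)$ for $j\in\{-1,0,1\}$ and then set $[\vec X,\Delta]u=\sum_j L_j(u)\,[\vec X,\Delta](\varphi_j\rho)$. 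The case $j=0$ is immediate: near $\bmp$ we have $\varphi_0\rho\equiv1$, so $\vec X(\varphi_0\rho)$ and $\Delta(\varphi_0\rho)$ are supported away from $\bmp$, and a distribution that is supported at $\bmp$ but also supported away from $\bmp$ vanishes.

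For $j=\pm1$ I use that $\varphi_1=w^{1/2}$ and $\varphi_{-1}=\wbar^{1/2}$ are $\Delta_g$‑harmonic on $C_{4\pi}^\circ$. Thus $\Delta(\varphi_{\pm1}\rho)$ agrees with the classical $\Delta_g(\varphi_{\pm1}\rho)$, which is smooth and supported only where $\rho$ is non‑constant, so $\vec X\Delta(\varphi_{\pm1}\rho)$ carries no mass at $\bmp$ and $[\vec X,\Delta](\varphi_{\pm1}\rho)$ is the cone‑point part of $-\Delta\big(\vec X(\varphi_{\pm1}\rho)\big)$. Near $\bmp$, using $\del_w w^{1/2}=\tfrac12 w^{-1/2}$ and $\del_\wbar w^{1/2}=0$,
\[
\vec X(\varphi_1\rho)=\tfrac{X_w}{2}\,r^{-1/2}e^{-i\theta/2},\qquad
\vec X(\varphi_{-1}\rho)=\tfrac{X_\wbar}{2}\,r^{-1/2}e^{+i\theta/2},
\]
and since the cone area element is $r\,dr\,d\theta$ these functions lie in $L^2(C_{4\pi})$, so $\Delta$ of them is a well-defined element of $\calD_{-2}$. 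Everything therefore reduces to showing that, modulo distributions not supported at $\bmp$,
\[
\Delta\big(\rho\,r^{-1/2}e^{-i\theta/2}\big)=4\pi\,L_{1},\qquad
\Delta\big(\rho\,r^{-1/2}e^{+i\theta/2}\big)=4\pi\,L_{-1}.
\]
To prove this I pair the cone‑point part with an arbitrary $\psi\in\calD_2$, written by Lemma~\ref{thm:distr-at-cone-exp} as $\psi=b_0+b_1\varphi_1+b_{-1}\varphi_{-1}+v$ with $b_j=L_j(\psi)$ and $v\in\overline{\mathfrak{D}}$. Using the $\Delta_g$‑harmonicity of $r^{\mp1/2}e^{\mp i\theta/2}$ together with Green's second identity on $\{r>\epsilon\}$, the cone‑point part is identified with the $\epsilon\downarrow0$ limit of a boundary integral over the circle $\{r=\epsilon\}$, which has circumference $4\pi\epsilon$. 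The contributions of $v$ and $\del_r v$ are too regular to survive the limit, and among the terms from $b_0,b_{\pm1}$ the orthogonality relations $\int_0^{4\pi}e^{ij\theta/2}e^{ik\theta/2}\,d\theta=4\pi\,\delta_{j+k,0}$ — which is where the cone angle $4\pi$ is used decisively — leave exactly one surviving mode, the powers of $\epsilon$ cancel, and the limit equals $4\pi\,b_{\pm1}=4\pi\,L_{\pm1}(\psi)$, the overall sign being fixed by the convention $\Delta\geq0$. Feeding this back into the formulas above gives $[\vec X,\Delta](\varphi_1\rho)=-\tfrac{X_w}{2}\cdot4\pi L_1=-2\pi X_w L_1$ and $[\vec X,\Delta](\varphi_{-1}\rho)=-2\pi X_\wbar L_{-1}$, hence $[\vec X,\Delta]u=-2\pi\big(X_w L_1(u)\,L_1+X_\wbar L_{-1}(u)\,L_{-1}\big)$.

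The main obstacle is this boundary‑integral claim: one must carry out the $\epsilon\downarrow0$ analysis cleanly — first localizing near $\bmp$ with a cutoff so that there is no outer boundary and the interior contributions cancel (using that $[\vec X,\Delta_g]=0$ and $\mathrm{div}\,\vec X=0$ on the flat locus $C_{4\pi}^\circ$), and then keeping track of every constant and sign: the factor $\tfrac12$ from $\del_w w^{1/2}=\tfrac12 w^{-1/2}$, the circumference $4\pi\epsilon$, the normalizations built into $L_0$ and $L_{\pm1}$ in \eqref{eq:L0pm1}, and the convention $\Delta\geq0$. A secondary point that must be justified is the reduction itself — that $u$ may be replaced by $L_0(u)\rho+L_{-1}(u)\varphi_{-1}\rho+L_1(u)\varphi_1\rho$ modulo $\overline{\mathfrak{D}}$ — which relies on the decomposition of $\calD_2$ in Lemma~\ref{thm:distr-at-cone-exp}, on $L_j|_{\overline{\mathfrak{D}}}=0$, and on the already‑established vanishing of $[\vec X,\Delta]$ on $\overline{\mathfrak{D}}$.
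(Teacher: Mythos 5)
Your argument is correct, and the constants and signs all check out (for $f=\rho\,r^{-1/2}e^{-i\theta/2}$ the Green's identity boundary term on $\{r=\epsilon\}$ does converge to $4\pi L_1(\psi)$, each of the two boundary pieces contributing $2\pi b_1$, and feeding this back gives $[\vec X,\Delta](\varphi_1\rho)=-2\pi X_w L_1$, in agreement with \eqref{eq:commutator-SK}), but your route is genuinely different from the paper's. The paper works with the symmetric bilinear pairing $\langle[\vec X,\Delta]u,v\rangle$ on $\calD_2\times\calD_2$: after the same reduction to the model functions, a Fourier-mode argument shows that for $\vec X=\del_w$ only the single pairing $\langle[\del_w,\Delta]\zeta_1,\zeta_1\rangle$ can be nonzero, and that one scalar is evaluated to $-2\pi$ by a Stokes-formula computation in the complex coordinate. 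You instead compute the commutator applied to each basis element directly, using harmonicity of $w^{\pm 1/2}$ to identify $[\vec X,\Delta](\varphi_{\pm1}\rho)$ with the cone-point part of $-\Delta\bigl(\vec X(\varphi_{\pm1}\rho)\bigr)$ and then extracting that cone-point part, $\Delta\bigl(\rho\,r^{-1/2}e^{\mp i\theta/2}\bigr)=4\pi L_{\pm1}$ modulo smooth terms supported away from $\bmp$, via Green's identity. The paper's version buys brevity (one number to compute, with the mode orthogonality doing the bookkeeping); yours buys a slightly stronger intermediate statement, namely the explicit identification of these cone-point distributions, which yields \eqref{eq:commutator-SK} as a distributional identity without passing through the symmetric pairing. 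Two small points of hygiene: your mode-orthogonality relation $\int_0^{4\pi}e^{ij\theta/2}e^{ik\theta/2}\,d\theta=4\pi\delta_{j+k,0}$ shows you are (correctly) using the bilinear pairing to define $\Delta:L^2\to\calD_{-2}$, and this should be said explicitly since with the Hermitian pairing the modes would match up differently; and the claim that the $\del_r v$ boundary term vanishes for an arbitrary $v\in\overline{\mathfrak{D}}$ is not quite free---it needs a scaled trace estimate together with $H^1\hookrightarrow L^p$ (any $p<\infty$) in two dimensions, or, more simply, it can be avoided altogether by noting that the cone-point functional you are computing already annihilates $\overline{\mathfrak{D}}$ by Corollary~\ref{thm:distn-cone-point-basis}, so it suffices to evaluate the boundary limit only on the three explicit model functions of Lemma~\ref{thm:distr-at-cone-exp}.
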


\begin{proof}
  Consider the bilinear pairing
$$
\big\langle [\vec{X}, \Delta] u, v \big\rangle, \quad u, v \in \calD_2.
$$
The discussion above shows that this is well defined for all $u, v \in \calD_2$.
It is clear that this pairing vanishes if either $u$ or $v$ lie in
$\overline{\mathfrak{D}}$.  So to compute the pairing, it suffices to consider
$u$ and $v$ to be linear combinations of the functions $\zeta_0 = \rho(|w|^2)$,
$\zeta_{-1}(w) \defeq \wbar^\frac{1}{2} \, \rho(|w|^2)$ and
$\zeta_1(w) \defeq w^\frac{1}{2} \, \rho(|w|^2)$. In fact, the pairing also
vanishes if either $u$ or $v$ are $\zeta_0$ since this is equal to a constant in
a neighbourhood of the cone point, hence vanishes near the cone point after the
application of either $\vec{X}$ or $\Delta$. So we need only consider $u$ and
$v$ equal to a combination of $\zeta_{\pm 1}$.

First, let $\vec{X} = \del_w$. For this $\vec{X}$, consider the action of the
operator $[\vec{X}, \Delta]$ on a Fourier mode $e^{ij\theta}$, for a
half-integer $j$.  Since the Fourier modes are eigenfunctions of
$\Delta_{\bbS^1_{4\pi}}$, and since $\del_w$ maps $e^{ij\theta}$ to a multiple
of $e^{i(j-1)\theta}$, the same property is true of $[\vec{X}, \Delta]$. It
follows that the only nonzero combination with $\vec{X} = \del_w$ is
$$
\big\langle [\del_w, \Delta] \zeta_1, \zeta_1 \big\rangle.
$$
Similarly, when $\vec{X} = \del_\wbar$, the only nonzero combination occurs when
$u=v=\zeta_{-1}$. In view of these considerations, to establish
\eqref{eq:commutator-SK}, it suffices to show that
\begin{equation}\begin{aligned}
    \big\langle [\del_w, \Delta] \zeta_1, \zeta_1 \big\rangle &= -2\pi , \\
    \big\langle [\del_\wbar, \Delta] \zeta_{-1}, \zeta_{-1} \big\rangle &=
    -2\pi.
  \end{aligned}\label{eq:commutator-zetapm1}\end{equation}
In fact, as the calculations are similar, we only prove the first.

Since we are using the bilinear pairing we have
\[
\big\langle [\del_w, \Delta] \zeta_1, \zeta_1 \big\rangle=-2 \big\langle
\Delta \zeta_1, \del_w\zeta_1 \big\rangle= -2\int_{C_{4\pi}} \Delta \zeta_1
\del_w\zeta_1\, dS
\]
where $dS$ denotes the Euclidean area element.  Using Stokes formula we have:
\[
\int_{r=\varepsilon} (\del_w\zeta_1)^2\, dw =i \int_{r\geq\varepsilon}
\Delta \zeta_1 \del_w\zeta_1\, dS.
\]
For $\varepsilon$ small enough we thus obtain
\[
\int_{r\geq\varepsilon} \Delta \zeta_1 \del_w\zeta_1\, dS= \pi.
\]
The claim follows.
\end{proof}

\subsection{The differentiated wave propagator on $C_{4\pi}$}
\label{sec:diff-wave-prop}
We now apply the formula \eqref{eq:commutator-SK} for the Schwartz kernel of the
commutator $\left[ \vec{X}, \Delta \right]$ to the Duhamel formula
\eqref{eq:commutator-Duhamel-identity} to compute the distribution $\Upsilon_0$.
Writing $\vec{X}$ in complex coordinates, i.e.,
$\vec{X} = X_w \, \del_w + X_{\wbar} \, \del_{\wbar}$, this yields
\begin{multline}
  \label{eq:diff-SK-1}
  \Upsilon_0(t,\bmq_1;\bmq_2) = 2\pi \int_{s = 0}^t \Big\{ X_w \big[ \bfW(t - s)
  L_1 \big](\bmq_1) \cdot \big[ L_{1} \circ \bfW(s) \big](\bmq_2) \\
  \mbox{} + X_{\wbar} \big[ \bfW(t - s) L_{-1} \big](\bmq_1) \cdot \big[ L_{-1}
  \circ \bfW(s) \big](\bmq_2) \Big\} \, ds .
\end{multline}
In particular, this shows $\Upsilon_0$ is an integral superposition of tensor
products of the distributions
\begin{equation}
  \label{eq:fundamental-spherical-waves}
  \ell_j(t) \defeq \bfW(t) \, L_j
\end{equation}
obtained from evolving the distributions $L_j$ under the sine flow $\bfW(t)$.
(Note that the self-adjointness of $\bfW(t)$, and the fact that its kernel is
real, implies $\bfW(t) \, L_j = L_j \circ \bfW(t)$, so we only need to work with
the evolved distributions $\ell_j(t)$.)  Since the $L_j$'s are supported only at
the cone point $\bmp$, we should expect the propagated distributions $\ell_j(t)$
to be spherical waves emanating out from $\bmp$, i.e., they should be
\emph{diffractive}-type waves.  As the next lemma shows, this is indeed the
case.

\begin{lemma}
  \label{thm:basic-diff-waves-structure}
  Let $t > 0$.  The distributions $\ell_1(t)$ and $\ell_{-1}(t)$ on $C_{4\pi}$
  are given explicitly by
  \begin{equation}
    \ell_{\pm 1}(t) =  \frac1{4\pi \sqrt{r} } \delta(t - r) \exp\!\left[
      \mp \frac{i}{2} \, \theta \right].
    \label{l1l-1}\end{equation}
\end{lemma}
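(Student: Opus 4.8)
The plan is to compute $\ell_{\pm 1}(t) = \bfW(t) L_{\pm 1}$ directly by separation of variables, exploiting the radial structure of the problem on $C_{4\pi}$. Since $L_{\pm 1}$ is supported at $\bmp$ and, by \eqref{eq:L0pm1}, detects only the angular Fourier mode $\exp[\mp\frac{i}{2}\theta]$, the propagated distribution $\ell_{\pm 1}(t)$ will live entirely in that same angular mode (because $\bfW(t)$ commutes with the rotation action on $C_{4\pi}$, hence with the angular spectral projectors $\Pi_j$). So I would write $\ell_{\pm 1}(t) = f_{\pm}(t,r)\exp[\mp\frac{i}{2}\theta]$ and reduce to a one-dimensional problem: $f_\pm$ solves the wave equation for the Bessel-type radial operator $-\partial_r^2 - r^{-1}\partial_r + \nu^2 r^{-2}$ with $\nu = 1/2$, with initial data corresponding to the distribution $L_{\pm 1}$ itself. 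The key point is that for $\nu = 1/2$ this radial operator conjugates, via $f \mapsto r^{1/2} f$, to the operator $-\partial_r^2 + (\nu^2 - 1/4)r^{-2} = -\partial_r^2$ — i.e., the \emph{free} one-dimensional wave operator — precisely because $\nu^2 - 1/4 = 0$ when $\nu = 1/2$. This is the arithmetic miracle special to the half-integer modes on a cone that are relevant here, and it is what makes the answer explicit and elementary.

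Concretely, I would proceed as follows. First, identify the precise initial conditions: $\bfW(t) = \sin(t\sqrt\Delta)/\sqrt\Delta$ satisfies $\bfW(0) = 0$ and $\partial_t \bfW(0) = \mathrm{Id}$, so $\ell_{\pm 1}(t)$ solves $(\partial_t^2 + \Delta)\ell_{\pm 1} = 0$ with $\ell_{\pm 1}(0) = 0$ and $\partial_t\ell_{\pm 1}(0) = L_{\pm 1}$. Second, set $g_\pm(t,r) = r^{1/2} f_\pm(t,r)$; then $g_\pm$ solves the free 1D wave equation $(\partial_t^2 - \partial_r^2) g_\pm = 0$ on the half-line $r > 0$, and the source $L_{\pm 1}$ transforms — using the characterization \eqref{eq:L0pm1} and the definition \eqref{eq:cone-point-distn-basis-elts}, together with the normalization of the angular projectors \eqref{eq:angular-spectral-projectors} — into a delta-function source at $r = 0$ with an explicit coefficient. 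Third, solve this 1D problem: the solution with appropriate boundary behavior at $r = 0$ is a rightward-moving wave $c\,\delta(t - r)$ for a constant $c$ that I would pin down by matching the strength of the source. Fourth, undo the conjugation, $f_\pm = r^{-1/2} g_\pm$, restore the angular factor, and verify that the constant comes out to $1/(4\pi)$, giving exactly \eqref{l1l-1}.

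An alternative and perhaps cleaner route, which I would use to cross-check, is to work from the explicit sine-kernel formulas \eqref{eq:4pi-sine-SK-region-1}–\eqref{eq:4pi-sine-SK-region-3}: apply $\bfW(t)$ to the model function $\sqrt{r_2}\exp[\mp\frac{i}{2}\theta_2]\rho(r_2)$ by integrating the known kernel $\bmE(t, r_1, \theta_1; r_2, \theta_2)$ against it, take the limit $r_1 \downarrow 0$ after dividing by $\sqrt{r_1}$ as in \eqref{eq:L0pm1}, and watch the jump across the diffractive front $\{t = r_1 + r_2\}$ produce the $\delta(t - r)$. Since the most singular contribution as $r_1 \to 0$ localizes near $r_2 \approx t$, the cutoff $\rho$ and the precise $\calD_2$-versus-$\overline{\mathfrak D}$ bookkeeping drop out of the leading term.

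The main obstacle I anticipate is bookkeeping the constants and distributional pairings correctly: tracking the factor $(4\pi)^{-1/2}$ from the angular normalization in \eqref{eq:angular-spectral-projectors}, the jump of $1/2$ versus $1/4$ in the sine kernel across the diffractive front, and the strength of the delta-source after the $r^{1/2}$-conjugation, so that they combine to exactly $1/(4\pi)$. The conceptual content — that half-integer modes propagate freely after conjugation and hence produce a clean spherical $\delta(t-r)$ wave — is straightforward; getting every constant right is the delicate part, which is presumably why the lemma is proved in detail rather than left to the reader.
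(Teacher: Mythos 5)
Your proposal is correct in outline, but it is not the route the paper takes: the paper's proof is a direct computation using Cheeger's functional calculus, writing $\bmE$ as a sum over angular modes of Hankel-type integrals $\int \frac{\sin(\lambda t)}{\lambda} J_{|j|/2}(\lambda r_1)J_{|j|/2}(\lambda r_2)\,\lambda\,d\lambda$, noting that $L_{\pm 1}$ selects the single mode $j=\pm 1$, substituting the elementary formula $J_{1/2}(z)=\sqrt{2/(\pi z)}\sin z$, letting $r_1\downarrow 0$, and evaluating the resulting Fourier integral to produce $\delta(t-r)$. Your separation-of-variables argument — conjugate the $\nu=\tfrac12$ radial operator by $r^{1/2}$ to the free $1$D wave operator and solve the reduced Cauchy problem — exploits exactly the same half-integer ``miracle'' and is essentially the alternative the authors themselves sketch in the remark after the lemma (verify the candidate solves the wave equation, then check $\ell_{\pm1}(0)=0$ and $\partial_t\ell_{\pm1}(0)=L_{\pm1}$); your constant-matching does come out right, since pairing $\frac{c}{\sqrt r}\delta(t-r)e^{\mp i\theta/2}$ with $u\in\calD_2$ and using \eqref{eq:L0pm1} gives $\langle \ell_{\pm1}(t),u\rangle\approx 4\pi c\,t\,L_{\pm1}(u)$, forcing $c=\tfrac1{4\pi}$. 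What your route buys is elementarity (no spectral resolution needed); what it needs, and what you only gesture at with ``appropriate boundary behavior,'' is a genuine identification step: $L_{\pm1}$ is not literally a delta source on the half-line (after conjugation it is a boundary-derivative functional), and the half-line wave equation is not unique without a boundary condition at $r=0$, so you must argue that the outgoing $\delta(t-r)$ solution is the one produced by the \emph{Friedrichs} dynamics — i.e., that the $r^{+1/2}$ (equivalently $J_{1/2}$ rather than $J_{-1/2}$) branch is selected, plus a uniqueness argument, e.g.\ by pairing against $\bfW(t)\psi$ and transposing the evolution. The paper's Cheeger-calculus route encodes this boundary condition automatically through the spectral resolution, which is the main advantage of its seemingly heavier machinery. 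Your cross-check via the explicit formulas \eqref{eq:4pi-sine-SK-region-1}--\eqref{eq:4pi-sine-SK-region-3} is a reasonable sanity check but is not needed once either argument is completed.
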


\begin{proof}[Proof of Lemma~\ref{thm:basic-diff-waves-structure}]
  It suffices to prove the lemma for $\ell_1(t)$; the statement for
  $\ell_{-1}(t)$ is similar and follows by complex conjugation.

  Recall the spectral projector form of the definition of $L_1$ (see
  \eqref{eq:L0pm1} and \eqref{eq:angular-spectral-projectors}) and
  \begin{equation*}
    L_1(u) = \lim_{r \downarrow 0}  \frac{1}{4\pi \sqrt{r}} \int_{\bbR / 4 \pi
      \bbZ} u(r,\theta) \, \exp\!\left[ - \frac{i}{2} \theta \right] d\theta .
  \end{equation*}
  To compute the action of $\bfW(t)$ on $L_1$, we use Cheeger's functional
  calculus on metric cones \cite{CheTay1}; this expresses $\bmE$ as the sum
  \begin{equation*}
    \bmE(t,r_1,\theta_1;r_2,\theta_2) = \frac{1}{4\pi} \sum_{j \in \bbZ}
    \exp\!\left[ \frac{ij}{2} \,
      (\theta_1 - \theta_2) \right] \int_{\lambda = 0}^\infty \frac{\sin(\lambda
      t)}{\lambda} \, J_{\frac{|j|}{2}}(\lambda r_1) \,
    J_{\frac{|j|}{2}}(\lambda r_2) \, \lambda \, d\lambda
  \end{equation*}
  over the angular modes of $\Delta$.  Since $L_1$ vanishes except at the
  $j = 1$ mode in this sum, we have the following simple formula for the action
  of $\ell_1(t)$.
  \begin{multline*}
    \big[\ell_1(t)\big](u) \\
    \mbox{} = \lim_{r_1 \downarrow 0} \frac{1}{(4\pi)^2 \sqrt{r_1}}
    \int_{\theta_1 = 0}^{4\pi} \int_{r_2 = 0}^\infty \int_{\theta_2 = 0}^{4\pi}
    \left\{ \int_{\lambda = 0}^\infty \frac{\sin(\lambda t)}{\lambda} \,
      J_{\frac{1}{2}}(\lambda r_1) \,
      J_{\frac{1}{2}}(\lambda r_2) \, \lambda \, d\lambda \right\} \\
    \mbox{} \times \exp\!\left[ - \frac{i}{2} \, \theta_1 \right] \exp\!\left[
      \frac{i}{2} (\theta_1 - \theta_2) \right] u(r_2,\theta_2) \, d\theta_1 \,
    r_2 \, dr_2 d\theta_2
  \end{multline*}
  Performing the $\theta_1$-integral, this simplifies to
  \begin{multline*}
    \big[\ell_1(t)\big](u) = \lim_{r_1 \downarrow 0} \frac{1}{4\pi \sqrt{r_1}}
    \int_{r_2 = 0}^\infty \int_{\theta_2 = 0}^{4\pi} \left\{ \int_{\lambda =
        0}^\infty \frac{\sin(\lambda t)}{\lambda} \, J_{\frac{1}{2}}(\lambda
      r_1) \,
      J_{\frac{1}{2}}(\lambda r_2) \, \lambda \, d\lambda \right\} \\
    \mbox{} \times \exp\!\left[ - \frac{i}{2} \, \theta_2 \right]
    u(r_2,\theta_2) \, r_2 \, dr_2 d\theta_2 .
  \end{multline*}
  We now substitute the explicit formula
  $J_{\frac{1}{2}}(z) = \left[\frac{2}{\pi z}\right]^\frac{1}{2} \sin(z)$ into
  the above, giving
  \begin{equation*}
    \ell_1(t) = \lim_{r_1 \downarrow 0} \frac{1}{2 \pi^2 \sqrt{r_2}}
    \int_{\lambda = 0}^\infty \sin(\lambda t) \, \frac{\sin(\lambda
      r_1)}{\lambda r_1} \, \sin(\lambda r_2) \, \exp\!\left[ - \frac{i}{2}
      \theta_2 \right] \, d\lambda .
  \end{equation*}
  By pairing with a test function and using dominated convergence, we see that
  this is equivalent (in the sense of distributions) to the expression
  \begin{equation*}
    \ell_1(t) = \frac{1}{2 \pi^2 \sqrt{r_2}}
    \int_{\lambda = 0}^\infty \sin(\lambda t) \sin(\lambda r_2)
    \exp\!\left[ - \frac{i}{2} \theta_2 \right] d\lambda .
  \end{equation*}

  To conclude the proof, we observe
  \begin{equation*}
    \int_{\lambda = 0}^\infty e^{- i
      \lambda (t + r)} \, d\lambda = \int_{\lambda = -\infty}^0 e^{i \lambda(t +
      r)} \, d\lambda .
  \end{equation*}
  This implies, dropping the subscripts from the base variables and replacing
  the sine functions by their complex exponential definitions, that
  \begin{equation}
    \label{eq:spherical-wave-distn-1}
    \begin{gathered}
      \ell_1(t) = - \frac{1}{8 \pi^2 \sqrt{r}} \int_{\lambda = -\infty}^\infty
      \left\{ e^{i \lambda (t + r)} - e^{i \lambda (t - r)} \right\}
      \exp\!\left[
        - \frac{i}{2} \, \theta \right] d\lambda  \\
      = \frac1{4\pi \sqrt{r} } \delta(t - r) \exp\!\left[ - \frac{i}{2} \,
        \theta \right], \text{ for } t > 0.
    \end{gathered}
  \end{equation}
\end{proof}

\begin{remark} It is remarkable that, on the cone of angle $4\pi$, there are
  solutions to the wave equation, namely
  $r^{-\frac{1}{2}} \delta(t - r) e^{\pm i\theta/2}$ obeying the \emph{sharp}
  Huygen's principle, that is, supported on the light cone itself. This can be
  confirmed by direct calculation, applying the wave operator to these
  distributions.

  We also remark that one can prove Lemma~\ref{thm:basic-diff-waves-structure} without appealing to the
  Cheeger functional calculus: after verifying that the $\ell_\pm(t)$ satisfy the
  wave equation, it only remains to check that $\lim_{t \to 0} \ell_{\pm 1}(t) = 0$
  and $\lim_{t \to 0} (d/dt) \ell_{\pm 1}(t) = L_{\pm 1}$.
\end{remark}

We conclude this subsection with the proof that $\Upsilon_0$ is a Lagrangian
distribution associated to the diffractive Lagrangian relation $\Lambda^\diff$.

\begin{proposition}
  \label{thm:diff-wave-prop-structure}
  Let $t > 0$, and suppose, as in the discussion in
  Section~\ref{sec:inhomog-4pi}, that $\vec{X}$ points in the direction
  $\theta$.  Then the distribution
  $\Upsilon_0 = \calK \big[ \left[ \vec{X}, \bfW(t) \right] \big]$ is given
  explicitly in polar coordinates by
  \begin{equation}
    \Upsilon_0(t,\bmq_1;\bmq_2) = \frac1{4\pi \sqrt{r_1 r_2}}
    \delta(t-r_1-r_2) \cos\! \left( \frac{\theta_1 + \theta_2}{2} -\theta \right).
    \label{eq:Y0}\end{equation}
\end{proposition}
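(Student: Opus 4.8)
The plan is to feed the explicit ``spherical wave'' formula of Lemma~\ref{thm:basic-diff-waves-structure} into the Duhamel-type identity \eqref{eq:diff-SK-1} for $\Upsilon_0$ and then carry out the $s$-integral by hand. First I would record the coefficients of $\vec{X}$ in the complex frame. Writing $w = x + iy$ and taking $\vec{X} = \cos\theta\,\partial_x + \sin\theta\,\partial_y$ (a general constant field differs only by a complex scalar, and \eqref{eq:Y0} is linear in $\vec{X}$, so this is no loss), the relations $\partial_x = \partial_w + \partial_{\wbar}$ and $\partial_y = i(\partial_w - \partial_{\wbar})$ give $\vec{X} = e^{i\theta}\,\partial_w + e^{-i\theta}\,\partial_{\wbar}$, so that $X_w = e^{i\theta}$ and $X_{\wbar} = \overline{X_w} = e^{-i\theta}$.

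Next, recalling that $\ell_j(t) = \bfW(t)L_j$ and that self-adjointness of $\bfW(s)$ together with the reality of its kernel give $L_j \circ \bfW(s) = \bfW(s)L_j = \ell_j(s)$, I would substitute \eqref{l1l-1} into \eqref{eq:diff-SK-1}. The two factors in the integrand become
\[
\bigl[\bfW(t-s)L_{\pm 1}\bigr](\bmq_1) = \frac{\delta(t - s - r_1)}{4\pi\sqrt{r_1}}\, e^{\mp i\theta_1/2}, \qquad \bigl[L_{\pm 1}\circ\bfW(s)\bigr](\bmq_2) = \frac{\delta(s - r_2)}{4\pi\sqrt{r_2}}\, e^{\mp i\theta_2/2},
\]
so that
\[
\Upsilon_0(t,\bmq_1;\bmq_2) = \frac{1}{8\pi\sqrt{r_1 r_2}} \int_0^t \Bigl( e^{i\theta - i(\theta_1 + \theta_2)/2} + e^{-i\theta + i(\theta_1 + \theta_2)/2} \Bigr)\, \delta(t - s - r_1)\, \delta(s - r_2)\, ds.
\]
Integrating against $\delta(s - r_2)$ sets $s = r_2$ and leaves $\delta(t - r_1 - r_2)$; collecting the two exponentials into $2\cos\bigl(\tfrac{\theta_1 + \theta_2}{2} - \theta\bigr)$ and using $\tfrac{2\pi}{(4\pi)^2} = \tfrac{1}{8\pi}$ then produces exactly \eqref{eq:Y0}.

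The only step that is not a routine computation is the justification that the products and the $s$-integration in \eqref{eq:diff-SK-1} are legitimate operations on distributions. Here this should be mild: the two factors depend on the disjoint variable groups $(t - s, \bmq_1)$ and $(s, \bmq_2)$, so their product is an honest tensor product, and the $s$-integration is a pushforward whose fibres meet the two delta-hypersurfaces $\{s = t - r_1\}$ and $\{s = r_2\}$ transversally, so no wavefront-set obstruction occurs and the result is again a conormal distribution. Moreover, away from $\bmp$ one has $r_1, r_2 > 0$, so the endpoints $s = 0$ and $s = t$ play no role and Lemma~\ref{thm:basic-diff-waves-structure} is applied only for positive times, where it is valid. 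I expect this bookkeeping, rather than any substantive calculation, to be the main thing to get right.
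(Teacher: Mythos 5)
Your proposal is correct and follows essentially the same route as the paper: substitute the explicit spherical-wave formula $\ell_{\pm 1}(t) = \frac{1}{4\pi\sqrt{r}}\delta(t-r)e^{\mp i\theta/2}$ from Lemma~\ref{thm:basic-diff-waves-structure} into the Duhamel expression \eqref{eq:diff-SK-1}, collapse the $s$-integral of the two delta factors to $\delta(t-r_1-r_2)$, and combine the $w$ and $\wbar$ contributions with $X_w = e^{i\theta}$, $X_{\wbar} = e^{-i\theta}$ to produce the cosine and the $\frac{1}{4\pi\sqrt{r_1 r_2}}$ prefactor. The only difference is your added (correct, and harmless) remarks on why the distributional product and pushforward are legitimate, which the paper leaves implicit.
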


\begin{proof}
  We begin by rewriting the equation \eqref{eq:diff-SK-1} using the
  distributions $\ell_j(t)$:
  \begin{multline*}
    \Upsilon_0(t,\bmq_1;\bmq_2) = 2\pi \int_{s = 0}^t \Big\{ X_w \,
    \big[\ell_1(t -
    s)\big](\bmq_1) \cdot \big[ \ell_{1}(s) \big](\bmq_2) \\
    \mbox{} + X_{\wbar} \, \big[ \ell_{-1}(t - s) \big](\bmq_1) \cdot \big[
    \ell_{-1}(s) \big](\bmq_2) \Big\} \, ds .
  \end{multline*}
  We break up the integral across the sum and consider the first summand:
  \begin{equation*}
    \Upsilon_0^{w}(t,\bmq_1;\bmq_2) \defeq \int_{s = 0}^t \big[ \ell_1(t-s) \big](\bmq_1)
    \cdot \big[ \ell_{1}(s) \big](\bmq_2) \, ds .
  \end{equation*}
  Substituting our expression \eqref{eq:spherical-wave-distn-1} in for
  $\ell_1(t)$ and its conjugate for $\ell_{-1}(t)$, the above becomes
  \begin{equation}
    \label{eq:Y0w}
    \begin{gathered}
      \frac{1}{16 \pi^2 \sqrt{r_1 r_2}} \int_{s = 0}^t \delta((t-s) - r_1)
      \delta(s - r_2) \exp\!\left[ - \frac{i}{2} \, (\theta_1 +\theta_2)
      \right]  \, ds \\
      = \frac{1}{16 \pi^2 \sqrt{r_1 r_2}} \delta(t-r_1-r_2) \exp\!\left[ -
        \frac{i}{2} \, (\theta_1 +\theta_2) \right] .
    \end{gathered}
  \end{equation}
  Similarly, we have
  \begin{equation}
    \Upsilon_0^{\wbar}(t,\bmq_1;\bmq_2) = \frac{1}{16 \pi^2 \sqrt{r_1 r_2}} \delta(t-r_1-r_2)  \exp\!\left[  \frac{i}{2} \, (\theta_1 +\theta_2)
    \right] .
    \label{eq:Y0wbar}  \end{equation}
  For the vector field with direction $\theta $, we have $X_w = \exp(i\theta)=\overline{X_{\wbar}}$. Adding  \eqref{eq:Y0w} times $X_w$ to \eqref{eq:Y0wbar} times $X_{\wbar}$, and multiplying by $2\pi$, we obtain \eqref{eq:Y0}.
\end{proof}

\subsection{The full wave propagator on $C_{4\pi}$}
\label{sec:full-wave-propagator}

Having computed $\Upsilon_0(t, \bmq_1; \bmq_2)$, we return to
\eqref{eq:W(t)-moving-conical-point-formula} and compute the sine wave kernel
$\bmE(t,\bmq_1,\bmq_2)$ on $C_{4\pi}$.  Since our primary interest is in the
behaviour near a geometric diffractive geodesic, let us assume for a while that
$\theta_1$ is close to $0$ and $\theta_2$ is close to $\pi$ (so that the
diffraction angle $\theta_1-\theta_2$ is close to $-\pi$). We then choose to
move the conical point in the direction $\frac{\pi}{2}.$ This amounts to putting
$\theta=-\frac{\pi}{2}$ in the previous formulas.

Let $r_j(s), \theta_j(s)$ be the distance and angle from the point $\bmq_j$ to
the cone point shifted by a distance $s$ in the $\theta = \frac{\pi}{2}$
direction, or equivalently, from the point $\bmq_j(s)$, obtained from $\bmq_j$
by shifting a distance $s$ in the $\theta = -\frac{\pi}{2}$ direction, to the
(fixed) cone point. Notice that, in the limit $s \to \infty$, the angle between
$\bmq_1(s)$ and $\bmq_2(s)$ approaches $2\pi$. In particular, the points will be
distance $r_1(s) + r_2(s) = 2s + O(1)$ apart, in this limit. Thus, the condition
that $(\Phi^s)^* \bmE(t, \bmq_1; \bmq_2) = 0$ is valid for large $s$. Hence, we
can write, using \eqref{eq:Y0w} and
\eqref{eq:W(t)-moving-conical-point-formula},
\begin{equation}
  \label{eq:W(t)delta}
  \bmE(t,\bmq_1,\bmq_2) = \frac1{4\pi} \int_0^\infty (r_1(s) r_2(s))^{-\frac{1}{2}} \delta(t - r_1(s) - r_2(s)) \sin\! \left( \frac{\theta_1(s) + \theta_2(s)}{2} \right) ds.
\end{equation}
This can be written
\begin{equation}\label{eq:EasMU}
  \bmE(t,\bmq_1;\bmq_2)=\int_{s\geq 0}\int_{-\infty}^\infty  e^{i \phi(t,\bmq_1,\bmq_2,s,\omega)} a(t, \bmq_1,\bmq_2,s,\omega)   \, ds  \, d\omega
\end{equation}
with the following phase function and amplitude:
\begin{equation}\label{Phi-phase}
  \begin{aligned}
    \phi(\bmq_1,\bmq_2,s, \omega) & = \Big(\sqrt{x_1^2 + (y_1-s)^2} +
    \sqrt{x_2^2 + (y_2-s)^2} - t\Big)\omega,\\
    a(t,\bmq_1,\bmq_2,s,\omega) & =
    \frac{1}{8\pi^2}\cdot\big(r_1(s)r_2(s)\big)^{-\frac{1}{2}}\cdot
    \sin\!\left(\frac{\theta_1(s)+\theta_2(s)}{2}\right).
  \end{aligned}
\end{equation}
Since the phase function is a nondegenerate phase function in the sense of
Definition~\ref{def:intLeg-param}, we find that the propagator is in the
Melrose-Uhlmann class.

This construction can actually be carried out as long as
$\theta_1\in (-\frac{\pi}{2},\frac{\pi}{2})$ and
$\theta_2 \in (\frac{\pi}{2},\frac{3\pi}{2}).$ When $\theta_1$ is in the same
interval but $\theta_2$ now belongs to $(-\frac{3\pi}{2},-\frac{\pi}{2})$ (thus
containing the diffraction angle of $+\pi$), the conical point have to be moved
in the opposite direction $\theta =-\frac{\pi}{2}.$ This leads to a similar
expression. Observe however that in that case the phase is now
\begin{equation*}
  \phi(\bmq_1,\bmq_2,s, \omega) = \Big(\sqrt{x_1^2 + (y_1+s)^2} + \sqrt{x_2^2 + (y_2+s)^2} - t\Big)\omega,
\end{equation*}

In the remaining cases for which $\theta_2$ belongs respectively to
$(-\frac{\pi}{2},\frac{\pi}{2})$ and $(\frac{3\pi}{2},\frac{5\pi}{2})$ the
conical point can be moved in the $\theta=\pi$ direction. It should be noted
however that in this case the limit $s\rightarrow \infty$ of
$\bmE^s(t,\bmq_1,\bmq_2)$ is not $0$ but the free solution.

In any case, it follows that $\bmE$ is an intersecting Lagrangian distribution
in a neighbourhood of $(t, \bmq_1; \bmq_2)$. Close to the diffraction angle
$-\pi$, we use the form of the phase \eqref{Phi-phase} to determine the two
Lagrangian submanifolds.  First, when $s=0$, it is clear that $\phi |_{s=0}$
parametrizes the Lagrangian $N^* \{ t = r_1 + r_2 \} = \Lambda^\diff$. Second,
when $s=0$, $\phi$ is stationary with respect to $s$ when the cone point lies on
the straight line between $(x_1,y_1+s)$ and $(x_2, y_2+s)$. In this case, the
second derivative $\partial^2_{ss} \phi$ is nonzero, and we can eliminate the
variable $s$ by replacing it with its stationary value. In this case, the sum of
distances $\sqrt{x_1^2 + (y_1+s)^2} + \sqrt{x_2^2 + (y_2+s)^2}$ is equal to the
distance between $(x_1, y_1+s)$ and $(x_2, y_2+s)$, which is the same as the
distance between $\bmq_1$ and $\bmq_2$. So an equivalent phase function is
$(|\bmq_1-\bmq_2|-t)\omega$, and this parametrizes the conormal bundle of the
direct front, $\Lambda^\geom$.

This essentially proves
\begin{proposition}
  \label{thm:4pi-propagator-structure}
  For each \emph{fixed} $t > 0$, the sine propagator kernel $\bmE$ on $C_{4\pi}$
  is an intersecting Lagrangian distribution on
  $C_{4\pi}^\circ \times C_{4\pi}^\circ$ of order $-1$:
  \begin{equation*}
    \bmE(t) \in   r_1^{-\frac{1}{2}} \, r_2^{-\frac{1}{2}} \cdot I^{-1}
    \! \left( C_{4\pi}^\circ \times C_{4\pi}^\circ; \Lambda^\diff, \Lambda^\geom
    \right) ;
  \end{equation*}
  in particular, it has Lagrangian order $-1$ on
  $\Lambda^\geom \setminus \Lambda^\diff$ and order $- \frac{3}{2}$ on
  $\Lambda^\diff \setminus \Lambda^\geom$.
\end{proposition}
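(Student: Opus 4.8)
The proof is the assembly of the constructions of Sections~\ref{sec:inhomog-4pi}--\ref{sec:full-wave-propagator}, so I describe how to put them together. Membership in $I^{-1}(\Lambda^\diff,\Lambda^\geom)$ is a microlocal property and the wavefront relation of $\bfW(t)$ is contained in $\Lambda^\geom\cup\Lambda^\diff$, so it suffices to check the statement microlocally near each point of $\Lambda^\geom\cup\Lambda^\diff$. Away from the intersection $\Sigma=\Lambda^\geom\cap\Lambda^\diff$ this is exactly the content of the Geometric Theory of Diffraction (Appendix~\ref{sec:GTD}): there $\bmE(t)$ is a classical Lagrangian distribution associated to $\Lambda^\geom$, of order $-1$, resp.\ to $\Lambda^\diff$, of order $-\tfrac32$ (with $t$ regarded as a parameter, so $\dim X = 4$), and by Definition~\ref{def:ILD} such distributions are precisely the elements of $I^{-1}(\Lambda^\diff,\Lambda^\geom)$ microlocalized away from $\Sigma$. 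Hence all of the work takes place near a geometrically diffractive geodesic, i.e.\ near a point of $\Sigma$.

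Near such a point, the moving-conical-point computation culminating in \eqref{eq:W(t)delta} expresses $\bmE$ as an integral in $s\ge 0$ of $\delta(t-r_1(s)-r_2(s))$ against a smooth amplitude. Writing $\delta(\tau)=(2\pi)^{-1}\int e^{i\omega\tau}\,d\omega$ and inserting a cutoff $\psi(s)$ supported near $s=0$ --- legitimate because, for $(\bmq_1,\bmq_2)$ in a small neighbourhood of $(\bmq_1^*,\bmq_2^*)$ and $t$ near $t^*$, the delta in \eqref{eq:W(t)delta} is supported where $s$ is small, the large-$s$ tail contributing nothing --- produces the oscillatory integral \eqref{eq:EasMU} with phase $\phi$ and amplitude $a$ as in \eqref{Phi-phase}. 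Here the single fibre variable is $\omega\in\RR$ (so $k=1$), the base variable is $x=(\bmq_1,\bmq_2)$ (so $n=4$), the variable $s\ge 0$ plays the role of the extra half-line variable in Definition~\ref{def:intLeg-param}, and $a$ --- being independent of $\omega$ and smooth in the other variables on $\mathrm{supp}\,\psi$ --- is a classical symbol of order $0$ in $\omega$.

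The key step is to verify that $\phi$ is a nondegenerate local parametrization of the intersecting pair in the sense of Definition~\ref{def:intLeg-param}, with $\Lambda_0=\Lambda^\diff$ (the Lagrangian without boundary) and $\Lambda_1=\Lambda^\geom$ (the manifold with boundary $\partial\Lambda^\geom=\Sigma$). At $s=0$, $d_\omega\phi=0$ reads $r_1+r_2=t$ and $\{(\bmq_1,\bmq_2,d_x\phi)\}=N^*\{t=r_1+r_2\}=\Lambda^\diff$, so \eqref{C0bij} recovers $\Lambda_0$. For the $s\ge 0$ part, the sum of distances $\sqrt{x_1^2+(y_1-s)^2}+\sqrt{x_2^2+(y_2-s)^2}$ is a strictly convex function of $s$ --- it is a sum of distances from the fixed points $\bmq_1,\bmq_2$ to a point moving along a line, and that line misses the cone point, so $x_1,x_2\neq 0$ --- hence $\partial^2_{ss}\phi\neq 0$ for $\omega\neq 0$; eliminating $s$ by stationary phase replaces this sum by the straight-line distance $|\bmq_1-\bmq_2|$, so \eqref{C1bij} maps $C_1$ onto $N^*\{t=|\bmq_1-\bmq_2|,\ |\theta_1-\theta_2|\le\pi\}=\Lambda^\geom$, the boundary $\{s=0\}$ of $C_1$ mapping onto $\partial\Lambda^\geom=\Sigma$. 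The linear independence of $d_{x,\omega}\partial_\omega\phi$ and $d_{x,\omega}\partial_s\phi$ at the base point, and the fact that \eqref{C0bij}, \eqref{C1bij} are diffeomorphisms onto full neighbourhoods of $q$, follow from the same strict convexity together with the nondegeneracy of the distance function. Granting this, Proposition~\ref{prop:int-pair-phasefunction}(ii) --- which requires $a$ to have order $m-\tfrac k2+\tfrac12+\tfrac n4=m+1$ here, forcing $m=-1$ since $a$ has order $0$ --- places \eqref{eq:EasMU} in $I^{-1}(\Lambda^\diff,\Lambda^\geom)$. Finally, on $\mathrm{supp}\,\psi$ the factor $(r_1(s)r_2(s))^{-1/2}$ in $a$ equals $r_1^{-1/2}r_2^{-1/2}$ times a smooth nonvanishing function of $(\bmq_1,\bmq_2,s)$, which may be pulled out to match the stated form $\bmE(t)\in r_1^{-1/2}r_2^{-1/2}\,I^{-1}(\Lambda^\diff,\Lambda^\geom)$; the Lagrangian orders $-1$ on $\Lambda^\geom\setminus\Lambda^\diff$ and $-\tfrac32$ on $\Lambda^\diff\setminus\Lambda^\geom$ are then immediate from Definition~\ref{def:ILD}.

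The main obstacle is the rigorous verification of the diffeomorphism conditions \eqref{C0bij}, \eqref{C1bij}: it is straightforward that the stationary sets of $\phi$ lie in $\Lambda^\diff$ and $\Lambda^\geom$, but one must confirm that $\phi$ parametrizes a \emph{full} neighbourhood of $q$ in each --- in particular that $C_1$, carrying the constraint $s\ge 0$, surjects onto a one-sided neighbourhood of the boundary point $q\in\partial\Lambda^\geom$, where the sign of the stationary value of $s$ (positive precisely when $|\theta_1-\theta_2|<\pi$ and a direct geodesic exists, negative otherwise, so that only the boundary term at $s=0$ survives) enters --- and that the three choices of displacement direction discussed after \eqref{Phi-phase}, one of which has the free solution rather than $0$ as its $s\to\infty$ limit, patch together to cover every point of $\Sigma$. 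The clean-intersection identity $\Sigma=\Lambda^\diff\cap\{|\theta_1-\theta_2|=\pi\}$ from \eqref{introeq:singular-set}, together with the strict convexity above, is what makes this work.
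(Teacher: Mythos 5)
Your proposal is correct and follows essentially the same route as the paper: the heart of both arguments is the moving-conical-point representation \eqref{eq:W(t)delta}, rewritten as the oscillatory integral \eqref{eq:EasMU} with phase \eqref{Phi-phase}, which is then recognized as a nondegenerate parametrization of the pair $(\Lambda^\diff,\Lambda^\geom)$ in the sense of Definition~\ref{def:intLeg-param}, with the order $m=-1$ read off from Proposition~\ref{prop:int-pair-phasefunction}(ii) exactly as you compute. The only (harmless) difference is that the paper covers all angular configurations by varying the direction in which the cone point is shifted, whereas you handle the region away from $\Sigma$ by appealing to the classical Lagrangian structure of the direct and diffracted fronts and reserve the moving-point representation for a neighbourhood of $\Sigma$; your extra care in verifying the diffeomorphism conditions \eqref{C0bij}--\eqref{C1bij} via strict convexity in $s$ is a welcome elaboration of what the paper leaves implicit.
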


\subsection{The Cheeger-Taylor formula}
It is instructive to compute the integral \eqref{eq:W(t)delta} explicitly, and
confirm that we obtain the Cheeger-Taylor formulae for the wave kernel from
Section~\ref{sec:inhomog-4pi}. Let us consider the case in which
$\theta_1\in (-\frac{\pi}{2},\frac{\pi}{2})$ and
$\theta_2\in (\frac{\pi}{2},\frac{3\pi}{2})$

Since the functions $r_i(s)$ take the form $\sqrt{r_0^2 + (s-s_0)^2}$, they are
convex functions of $s$. Therefore, as $s$ ranges from $0$ to $\infty$, the
delta function $\delta(t - r_1(s) - r_2(s))$ can be nonzero for at most two
values of $s$. More precisely, if $t < r_1 + r_2$ and the angle between $\bmq_1$
and $\bmq_2$ is greater than $\pi$, then there are no values of $s$ for which
$t = r_1(s) + r_2(s)$, since in this case, both $r_1(s)$ and $r_2(s)$ are
increasing in $s$. On the other hand, suppose that $t < r_1 + r_2$ and the angle
between $x_1$ and $x_2$ is \emph{less} than $\pi$. We might as well assume that
$t > d(x_1, x_2)$, since otherwise the wave kernel is zero due to finite speed
of propagation. In this case, $r_1(s) + r_2(s)$ decreases until the cone point
lies directly between $x_1$ and $x_2$, when we have
$r_1(s) + r_2(s) = d(x_1, x_2) < t$, and then increases to infinity. It follows
that in this case there are two values of $s$ for which $t = r_1(s) +
r_2(s)$.
The final case is $t > r_1 + r_2$. In this case, regardless of whether
$r_1(s) + r_2(s)$ initially increases or decreases, there is always one value of
$s$ for which $t = r_1(s) + r_2(s)$.

For each value of $s$ satisfying $t = r_1(s) + r_2(s)$, we calculate the
contribution to the integral \eqref{eq:W(t)delta}. This is given by
\begin{multline}
  \frac1{4\pi} (r_1(s) r_2(s))^{-\frac{1}{2}} \big| r_1'(s) + r'_2(s)
  \big|^{-1} \sin \!\left( \frac{\theta_1(s) + \theta_2(s)}{2} \right) \\
  = \frac1{4\pi} (r_1(s) r_2(s))^{-\frac{1}{2}} \left| \sin\!\left(\theta_1(s)\right) + \sin \! \left(\theta_2(s) \right) \right|^{-1} \sin \!\left( \frac{\theta_1(s) + \theta_2(s)}{2} \right)\\
  = \frac1{4\pi} \left[(r_1(s) r_2(s)\right]^{-\frac{1}{2}} \left|
  \frac{\sin\!\left(\frac{\theta_1(s)+\theta_2(s)}{2}\right)}{\sin\!\left(
    \theta_1(s) \right) + \sin\!\left( \theta_2(s) \right)} \right|,
\end{multline}
since by choice $\frac{\theta_1(s)+\theta_2(s)}{2}\in (0,\pi).$ Using the
addition formula for $\sin \theta_1 +\sin \theta_2$ we obtain that the
contribution can be written
\[
\frac{1}{8\pi} (r_1(s)r_2(s))^{-\frac{1}{2}} \left| \cos\!\left(
\frac{\theta_1-\theta_2}{2} \right) \right|^{-1}
\]
and we want to prove that this coincides with
\[
\frac{1}{4\pi} \Big(
t^2-\big(r_1^2+r_2^2-2r_1r_2\cos(\theta_1-\theta_2)\big)\Big)^{-\frac{1}{2}},
\]
whenever the moved conical point $\bmp(s)$ lies in between $\bmq_1$ and
$\bmq_2.$ This implies that $t=r_1(s)+r_2(s)$ so that we have (we omit the
dependence on $s$)
\begin{multline}
   t^2-\big(r_1^2+r_2^2-2r_1r_2\cos(\theta_1-\theta_2)\big)\\ \mbox{} =
   2r_1r_2\big( 1+\cos(\theta_1-\theta_2)\big) = 4 r_1r_2 \cos^2\!\left(\frac{\theta_1-\theta_2}{2}\right) .
\end{multline}
The claim thus follows.

The wave kernel on $C_{4\pi}$ is therefore given by $0$, $1$ or $2$ times this
quantity, according as there are $0$, $1$ or $2$ values of $s > 0$ satisfying
$t = r_1(s) + r_2(s)$, as discussed above. This agrees with the expression
\eqref{eq:4pi-sine-SK-region-1}--\eqref{eq:4pi-sine-SK-region-3} obtained by
Cheeger-Taylor.


\section{The microlocal structure of the wave propagator on $C_\alpha$}
\label{sec:micro-structure-alpha}

We now analyze the structure of the Schwartz kernel $\bmE$ of the sine
propagator on the cone $C_\alpha$ of generic cone angle $\alpha$.  First let us
recall the definitions of the geometric and diffractive Lagrangians and their
intersection:  the geometric (or ``main'') Lagrangian is
\begin{subequations}
  \begin{equation}
    \label{eq:geom-Lagrangian-relation}
    \Lambda^\geom \defeq N^* \! \left\{ t^2 = r_1^2 + r_2^2 - 2 r_1 r_2
      \cos(\theta_1 - \theta_2) \text{ and } |\theta_1 - \theta_2| \leqslant \pi
    \right\},
  \end{equation}
  the diffractive Lagrangian is
  \begin{equation}
    \label{eq:diff-Lagrangian-relation}
    \Lambda^\diff \defeq N^* \! \left\{ t^2 = \left(r_1 + r_2\right)^2 \right\} ,
  \end{equation}
  and their intersection is the singular set
  \begin{equation}
    \label{eq:singular-set}
    \Sigma \defeq \Lambda^\geom \cap \Lambda^\diff
  \end{equation}
\end{subequations}
In particular, we note that $\pr(\Sigma) = \left\{ \text{$t^2 = (r_1 + r_2)^2$
    and $\theta_1 - \theta_2 = \pm \pi$}\right\}$.

To do this, we use Friedlander's representation of the sine wave kernel
$\bmE(t)$ on the cone of angle $\alpha$, which expresses, in effect, this wave
kernel as the $\alpha$-periodized sine wave kernel on the cone of angle
$\infty$. Because of this, the wave kernels on two different cones
$C_{\alpha_1}$ and $C_{\alpha_2}$ are closely related. We use this fact,
together with our complete understanding of the case $\alpha = 4\pi$ from
Section~\ref{sec:micro-structure}, to prove the following theorem for any cone.
\begin{theorem}
  \label{thm:structure-sine-prop}
  The Schwartz kernel $\bmE$ of the sine propagator $\bfW(t)$ on the Euclidean cone $C_\alpha$ is an intersecting Lagrangian distribution of class
  \begin{equation*}
    (r_1 r_2 )^{-\frac{1}{2}} \cdot I^{-\frac{5}{4},-\frac{1}{2}} \left(\bbR \times C^\circ_\alpha \times  C^\circ_\alpha; \Lambda^\diff, \Lambda^\geom \right) .
  \end{equation*}
\end{theorem}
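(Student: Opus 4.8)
The plan is to split $T^*(\bbR \times C^\circ_\alpha \times C^\circ_\alpha)\setminus 0$ into a conic neighbourhood of the singular set $\Sigma = \Lambda^\geom \cap \Lambda^\diff$ and its complement, establish the claimed membership on each piece, and patch with a microlocal partition of unity. Away from $\Sigma$ there is essentially nothing to do: on $\Lambda^\geom \setminus \Lambda^\diff$ the Geometric Theory of Diffraction description of $\bmE$ (Appendix~\ref{sec:GTD}), equivalently the Cheeger--Taylor formula of Section~\ref{sec:micro-structure}, exhibits $(r_1 r_2)^{1/2}\,\bmE$ as a classical Fourier integral distribution associated to the conormal bundle $\Lambda^\geom$, of Lagrangian order $-\tfrac{5}{4}$ when $t$ is counted as a variable; on $\Lambda^\diff \setminus \Lambda^\geom$ the same description gives $(r_1 r_2)^{1/2}\,\bmE$ as a classical conormal distribution associated to $\Lambda^\diff$ of order $-\tfrac{7}{4}$. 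By Definition~\ref{def:ILD} these are exactly the $u_1$- and $u_0$-type constituents of an element of $I^{-5/4,-1/2}(\Lambda^\diff,\Lambda^\geom)$, so the only real work is in a conic neighbourhood of $\Sigma$.

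Near $\Sigma$ I would argue by comparison with the cone $C_{4\pi}$ analysed in Section~\ref{sec:micro-structure}. Fixing a point of $\pr(\Sigma)$ --- a geometrically diffractive geodesic of length $t^* = r_1^* + r_2^*$ and diffraction angle $\epsilon\pi$ --- I would first pass to a finite cover $C_{N\alpha}$ with $N\alpha > \pi$ if necessary, and use the local isometry $\mathcal I_\epsilon$ to parametrize a neighbourhood of $(\bmq_1^*,\bmq_2^*)$ by a product of Euclidean balls, so that both $\bmE_\alpha$ and the $C_{4\pi}$ kernel can be written through Friedlander's representation \cite{Fri} of the sine kernel as a contour integral over the cone of infinite angle. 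In that representation the cone angle enters only through the periodization factor, and near our point the only branch that is singular on $\Lambda^\geom$ is the one whose angular argument sweeps through $\pm\pi$; every other contribution is conormal to $\{t = r_1 + r_2\}$. This yields, microlocally near $\Sigma$, a decomposition
\begin{equation*}
  \bmE_\alpha = \bmE^\geom + \bmE^\diff_\alpha ,
\end{equation*}
in which $\bmE^\geom$ coincides, modulo a classical conormal distribution of order $-\tfrac{7}{4}$ associated to $\Lambda^\diff$, with the corresponding piece of the $C_{4\pi}$ kernel --- this is the angle-independence of the leading singularity near $\Sigma$ --- while $\bmE^\diff_\alpha$ is itself a classical conormal distribution of order $-\tfrac{7}{4}$ associated to $\Lambda^\diff$, hence an admissible $u_0$-term for $I^{-5/4,-1/2}(\Lambda^\diff,\Lambda^\geom)$. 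Proposition~\ref{thm:4pi-propagator-structure}, together with the invariance of the Melrose--Uhlmann class under Fourier integral operators preserving $\Lambda^\diff$ and $\Lambda^\geom$ (here, the isometric identification $\mathcal I_\epsilon$), then places $(r_1 r_2)^{1/2}\,\bmE^\geom$ in $I^{-5/4,-1/2}(\Lambda^\diff,\Lambda^\geom)$ near $\Sigma$: the first index is the value $-1$ from Proposition~\ref{thm:4pi-propagator-structure} lowered by $\tfrac{1}{4}$ because $t$ is now a spacetime variable rather than a parameter, and the second index $-\tfrac{1}{2}$ is precisely the gap between the $\Lambda^\geom$- and $\Lambda^\diff$-orders recorded in that Proposition (and in Proposition~\ref{thm:diff-wave-prop-structure}).

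Finally I would patch: choose a microlocal partition of unity subordinate to the cover of $T^*(\bbR \times C^\circ_\alpha \times C^\circ_\alpha)\setminus 0$ by a conic neighbourhood of $\Sigma$, by $\Lambda^\geom \setminus \Lambda^\diff$, by $\Lambda^\diff \setminus \Lambda^\geom$, and by the complement of $\Lambda^\geom \cup \Lambda^\diff$ (where $\bmE$ is smooth), cut $\bmE$ by the corresponding pseudodifferential operators, and add the pieces, using that $(r_1 r_2)^{-1/2}\,I^{-5/4,-1/2}(\Lambda^\diff,\Lambda^\geom)$ is closed under pseudodifferential operators preserving $\Lambda^\geom$ and $\Lambda^\diff$ and under locally finite sums. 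The step I expect to be the main obstacle is the comparison near $\Sigma$: one must extract from Friedlander's contour integral the precise statement that, modulo conormal distributions of order $-\tfrac{7}{4}$ on $\Lambda^\diff$, the part of $\bmE_\alpha$ singular on $\Lambda^\geom$ near a geometrically diffractive point is exactly the universal, angle-independent kernel --- which requires a careful deformation of the contour around the pole whose argument crosses $\pm\pi$ and, crucially, tracking the symbol orders through the covering map $C_{N\alpha} \to C_\alpha$ and the periodization, so as to land in precisely the bidegree $(-5/4,-1/2)$ rather than in a weaker class.
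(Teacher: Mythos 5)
Your proposal follows essentially the same route as the paper: the structure away from $\Sigma$ is the classical/Cheeger--Taylor one, and near $\Sigma$ you compare $\bmE_\alpha$ with the explicitly computed $C_{4\pi}$ kernel of Proposition~\ref{thm:4pi-propagator-structure} via Friedlander's representation, where the cone angle enters only through the periodization, so the difference is purely diffractive of order $-\tfrac{7}{4}$ --- this is exactly Proposition~\ref{thm:ml-structure-sing-set-ind}, from which the theorem follows as in the paper. The only cosmetic differences are that the paper's comparison argument works directly with the periodized function $G_\alpha$ (cutting it in the angular variable rather than deforming a contour) and requires cone angles greater than $2\pi$ (not merely $N\alpha>\pi$) before invoking the method of images, but these do not change the substance of your argument.
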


\subsection{Friedlander's construction of the wave propagator}
\label{sec:friedlanders-construction}

To start our study of the sine propagator near the singular set $\Sigma$, we
recall Friedlander's construction of the Schwartz kernel of $\bfW(t)$ from
\cite{Fri}.

Let $G(y,z)$ be the $L^1_\loc$-function on $\bbR^2_{(y,z)}$ given by
\begin{equation}
  \label{eq:Fried1}
  G(y,z) \defeq
  \begin{cases}
    H(y + \cos(z)) \, H(\pi - |z|), & y < 1 \\[1em]
    - \dfrac{1}{\pi} \left\{ \arctan\!\left[ \dfrac{\pi - z}{\arccosh(y)}
      \right] + \arctan\!\left[ \dfrac{\pi + z}{\arccosh(y)} \right] \right\}, &
    y > 1 .
  \end{cases}
\end{equation}
Form its periodization with respect to the map
\begin{equation*}
  \bbR^2 \ni (y,z) \longmapsto (y,z+\alpha) \in \bbR^2,
\end{equation*}
and denote the resulting function by $G_\alpha(y,z)$; concretely,
\begin{equation*}
  G_\alpha(y,z) = \sum_{k \in \bbZ} G(y,z + \alpha \cdot k) .
\end{equation*}
We may thus view $G_\alpha(y,z)$ as a function on
$\bbR \times \left( \bbR \big/ \alpha \bbZ \right)$.  Now, define the operator
$\bm{A} : \bbR \times \left( \bbR \big/ \alpha \bbZ \right) \To \bbR \times
C_\alpha^\circ \times C_\alpha^\circ$
as the composite $\bm{A} = A_3 A_2 A_1$, where
\begin{itemize}
\item $A_1 = \big[ \del_y \big]^\frac{1}{2}$ is half-derivation in the
  $y$-variable, that is, the composition of differentiation in $y$ with the
  fractional integral operator with kernel given by $|y-y'|^{-\frac{1}{2}}$;
\item $A_2 = F^*$ is pullback by the map
  \begin{equation*}
    F(t,r_1,\theta_1;r_2,\theta_2) = \left( y = \frac{t^2 - r_1^2 - r_2^2}{2 r_1
        r_2}, z = \theta_1 - \theta_2 \right) ;
  \end{equation*}
\item and $A_3$ is multiplication by the factor
  $\frac{1}{2\pi \sqrt{ \smash[b]{2 r_1 r_2} }}$.
\end{itemize}

\begin{proposition}[\cites{Fri,Hil}]
  \label{thm:Friedlander-construction}
  The operator $\bm{A}$ is a Fourier integral operator associated to the
  Lagrangian relation
  \begin{equation}
    \label{eq:Lagn-relation-Fried-A}
    \Lambda_F \defeq N^* \! \left\{ y = \frac{t^2 - r_1^2 - r_2^2}{2 r_1 r_2}
      \text{ and } z = \theta_1 - \theta_2 \right\} ,
  \end{equation}
  and the Friedlander distribution $\bm{A} G_\alpha$ on
  $\bbR_t \times C_\alpha^\circ \times C_\alpha^\circ$ is well-defined and equal
  to the Schwartz kernel of the sine propagator, $\bmE$.
\end{proposition}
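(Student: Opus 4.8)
The plan is to establish the two halves of the statement separately. For the claim that $\bm{A}$ is a Fourier integral operator with Lagrangian relation $\Lambda_F$, I would examine the three factors of $\bm{A} = A_3 A_2 A_1$ in turn. The outer factor $A_3$ --- multiplication by the smooth, nowhere-vanishing function $(2\pi\sqrt{2 r_1 r_2})^{-1}$ on $\bbR_t \times C_\alpha^\circ \times C_\alpha^\circ$ --- is a pseudodifferential operator of order $0$, hence an FIO associated to the diagonal. The inner factor $A_1 = [\partial_y]^{\frac12}$ is $\partial_y$ composed with the fractional-integral operator with kernel $|y - y'|^{-\frac12}$, so it is a classical pseudodifferential operator of order $\tfrac12$ on $\bbR_y \times (\bbR/\alpha\bbZ)_z$, again associated to the diagonal. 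The middle factor $A_2 = F^*$ is pullback by the map $F$; since $\partial_t y = t/(r_1 r_2)$, the differential of $F$ has full rank $2$ for $t \neq 0$, which is the relevant range. Pullback by a submersion is defined on all distributions, and is an FIO whose canonical relation is the twisted conormal bundle of $\Graph(F)$, which is precisely $\Lambda_F$ of \eqref{eq:Lagn-relation-Fried-A}. Since composition with a pseudodifferential operator leaves the canonical relation unchanged, $\bm{A}$ is an FIO associated to $\Lambda_F$. Well-definedness of $\bm{A}G_\alpha$ follows as well: $A_2$ applies to the distribution $A_1 G_\alpha$ because $F$ is a submersion, and $A_1$ applies to $G_\alpha$ because $G_\alpha$ is bounded while $\partial_y G_\alpha$ decays integrably in $y$ (from \eqref{eq:Fried1}, the $y > 1$ branch has $y$-derivative of size $O\bigl(1/(y(\log y)^2)\bigr)$ as $y \to \infty$).

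For the identity $\bm{A}G_\alpha = \bmE$, I would fix $\bmq_2 \in C_\alpha^\circ$ and show that $u(t,\bmq_1) \defeq (\bm{A}G_\alpha)(t,\bmq_1;\bmq_2)$ solves the wave initial value problem $\boxop u = 0$ on $(0,\infty) \times C_\alpha^\circ$ with $u \to 0$, $\partial_t u \to \delta_{\bmq_2}$ as $t \downarrow 0$, and with $u(t,\cdot) \in \calD_2$ for every $t > 0$; by uniqueness of the sine flow for the Friedrichs Laplacian this forces $u = \bmE(\cdot,\cdot;\bmq_2)$. The computational heart is the identity
\[
  2 r_1 r_2 \bigl(y + \cos z\bigr) = t^2 - \bigl(r_1^2 + r_2^2 - 2 r_1 r_2 \cos(\theta_1 - \theta_2)\bigr),
\]
which in the sector $|\theta_1 - \theta_2| \leqslant \pi$ is $t^2$ minus the squared Euclidean distance, together with the fact that $[\partial_y]^{\frac12}$ sends $H(\cdot)$ to a fixed multiple of $(\cdot)_+^{-\frac12}$: these show that $\bm{A}$ applied to the $k = 0$ term $H(y + \cos z)\,H(\pi - |z|)$ of $G_\alpha$ reproduces the Euclidean sine kernel $\tfrac1{2\pi}\bigl(t^2 - |\bmq_1 - \bmq_2|^2\bigr)_+^{-\frac12}$, the constants in $A_1$ and $A_3$ being calibrated exactly for this. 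The remaining contributions --- the periodization terms $k \neq 0$ and the ``$y > 1$'' branch of $G$ --- are smoother and assemble into the diffractive part of $\bmE$; I would verify this either by invoking the classical Sommerfeld / Cheeger--Taylor computation, or by expanding $\bm{A}G_\alpha$ and Cheeger's functional-calculus series for $\bmE$ in angular Fourier modes in $z$ and matching mode by mode, which reduces to a standard Bessel-function integral identity. As a check, specializing to $\alpha = 4\pi$ must recover the kernel obtained directly in Section~\ref{sec:micro-structure} (cf.\ \eqref{eq:4pi-sine-SK-region-2}), which fixes all constants.

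The step I expect to be the main obstacle is not the wave equation itself but pinning down the self-adjoint realization: $\bm{A}G_\alpha$ a priori only solves $\boxop u = 0$ with a point source, and one must rule out the non-Friedrichs asymptotics $\log r$ and $r^{-\frac12} e^{\pm i\theta/2}$ at the cone point, i.e.\ verify $u(t,\cdot) \in \calD_2$ for each $t > 0$. This requires analysing the behaviour of the ``$y > 1$'' branch of $G$ and of the operators $A_1$, $A_3$ as $r_1 \downarrow 0$ (equivalently $y \to \infty$), and checking that only the $O(1)$ and $O(\sqrt{r_1})$ terms of the cone-point expansion survive. A secondary technical point is the justification, uniform in $z$, of termwise convergence of the periodization $\sum_{k} G(y, z + \alpha k)$ after application of the nonlocal operator $A_1$, which is controlled by the decay estimate on $\partial_y G$ noted above.
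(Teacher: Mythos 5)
Your sketch is essentially sound, but note that the paper does not prove this proposition at all: it is quoted from Friedlander \cite{Fri} (see also \cite{Hil}), so you are supplying a self-contained argument where the paper simply cites the literature. Your route --- factoring $\bm{A}=A_3A_2A_1$ into a multiplication operator, the pullback by the submersion $F$ (an FIO with canonical relation the twisted conormal of $\Graph(F)$, i.e.\ $\Lambda_F$), and the Fourier multiplier $[\partial_y]^{1/2}$, then proving $\bm{A}G_\alpha=\bmE$ by checking the wave equation, the initial data, membership in the Friedrichs domain, and uniqueness, with the $k=0$ term calibrated against the Euclidean kernel $\tfrac1{2\pi}(t^2-d^2)_+^{-1/2}$ --- is a legitimate and reasonably standard verification, and your identification of the crux (excluding the non-Friedrichs asymptotics $\log r$, $r^{-1/2}e^{\pm i\theta/2}$ at the cone point, which is exactly what the mode-by-mode comparison with Cheeger's functional calculus, or the treatment in \cite{Hil}, settles) is the right place to put the weight. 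Two caveats. First, your claim that the $k\neq 0$ periodization terms are ``smoother and assemble into the diffractive part'' is only correct for $\alpha>2\pi$: when $\alpha<2\pi$ the shifted Heaviside branches $H(y+\cos(z+\alpha k))H(\pi-|z+\alpha k|)$ with $k\neq0$ produce geometric square-root singularities of full strength, coming from geodesics wrapping around the cone point; they are not smoother, though your fallback (termwise matching in angular Fourier modes against the Cheeger--Taylor series) handles them correctly, so this is a misstatement rather than a fatal gap. Second, $[\partial_y]^{1/2}$ is a Fourier multiplier whose symbol is singular at $\eta=0$, so calling it a classical pseudodifferential operator needs the usual proviso that one works modulo operators that do not affect the conic wavefront set; this does not change the conclusion about the canonical relation of $\bm{A}$.
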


The important feature of Friedlander's construction for us is the ease with
which it decomposes $\bmE$ into pieces which are either associated to the
geometric wave, the diffracted wave, or their intersection $\Sigma$.  We use
this to show the structure of $\bmE$ near $\Sigma$ is the same (up to a purely
diffractive term) for all cone angles $\alpha$.

\begin{proposition}
  \label{thm:ml-structure-sing-set-ind}
  Let $C_{\alpha_1}$ and $C_{\alpha_2}$ be two Euclidean cones.  There are
  isometric neighborhoods
  $V_1^\pm \subseteq \bbR_t \times C_{\alpha_1}^\circ \times C_{\alpha_1}^\circ$
  and
  $V_2^\pm \subseteq \bbR_t \times C_{\alpha_2}^\circ \times C_{\alpha_2}^\circ$
  of the set
  \begin{equation*}
    \left\{ \text{$t^2 = (r_1 + r_2)^2$ and $\theta_1 - \theta_2 = \pm \pi$}
    \right\} = \pr(\Sigma)
  \end{equation*}
  on which
  \begin{equation}
    \bmE_{\alpha_1} - \bmE_{\alpha_2} \in I^{-\frac{7}{4}} \! \left(V_j^\pm,
      \Lambda^\diff \right) ,
    \label{sine-wave-difference}  \end{equation}
  where $\bmE_\alpha$ is the sine propagator kernel on $\bbR_t \times
  C_\alpha^\circ \times C_\alpha^\circ$. The key point is that \eqref{sine-wave-difference} is purely diffractive.
\end{proposition}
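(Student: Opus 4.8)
The plan is to exploit Friedlander's formula $\bmE_\alpha = \bmA\, G_\alpha$ of Proposition~\ref{thm:Friedlander-construction}: the Fourier integral operator $\bmA = A_3 A_2 A_1$ is independent of the cone angle, while $G_\alpha(y,z) = \sum_{k\in\bbZ} G(y,z+\alpha k)$ is the $\alpha$-periodization of the single, angle-independent function $G$ of \eqref{eq:Fried1}. Hence $\bmE_{\alpha_1} - \bmE_{\alpha_2} = \bmA\big(G_{\alpha_1} - G_{\alpha_2}\big)$, and since the two periodizations have the \emph{same} $k=0$ term $G(y,z)$, it cancels:
\[
G_{\alpha_1} - G_{\alpha_2} = \sum_{k\neq 0} G(\cdot,\cdot+\alpha_1 k) - \sum_{k\neq 0} G(\cdot,\cdot+\alpha_2 k).
\]
The first thing I would check is that this cancellation is exactly what removes the geometric front. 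In the Friedlander coordinates $(y,z) = \big((t^2-r_1^2-r_2^2)/2r_1r_2,\ \theta_1-\theta_2\big)$, the set $\pr(\Sigma)$ corresponds to $\{y=1,\ z=\pm\pi\}$, and from \eqref{eq:Fried1} the only singularity of $G(y,z)$ meeting a neighbourhood of such a point is the jump of $H(y+\cos z)$ along the geometric front $\{y=-\cos z\}$, which is tangent to the diffractive front $\{y=1\}$ at $z=\pm\pi$ (this tangency is the intersecting-Lagrangian structure, and $G(y,z)$ also carries the leading diffractive singularity near $\Sigma$). All of this lives in the $k=0$ term, so it disappears from $G_{\alpha_1}-G_{\alpha_2}$.

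It then remains to understand the $k\neq 0$ terms near $(1,\pm\pi)$. Here I would first reduce to $\alpha_1,\alpha_2>\pi$ and $\alpha_j\neq 2\pi$ by passing to finite covers $C_{N_j\alpha_j}\to C_{\alpha_j}$ with $N_j\alpha_j>\pi$, $N_j\alpha_j\neq 2\pi$ (such $N_j$ always exist): by finite propagation speed and summing over deck transformations, the sine kernel on $C_{\alpha_j}$ agrees with the pullback of the one on $C_{N_j\alpha_j}$ up to a term that is $\calC^\infty$ near a geometrically diffractive geodesic, so it suffices to compare the covers. With $\alpha_j>\pi$, $\alpha_j\neq 2\pi$, the geometric front $\{y=-\cos(z+\alpha_j k)\}$ of the $k$-th term passes through $\pr(\Sigma)$ only if $\cos(\alpha_j k)=1$ while also $|z+\alpha_j k|<\pi$ near $z=\pm\pi$, i.e. $\alpha_j k\in 2\pi\bbZ\cap(-2\pi,0)$ (resp. $(0,2\pi)$), which is empty for $k\neq 0$; so for $k\neq 0$ those fronts sit at $y$ bounded away from $1$ and lie outside a small enough neighbourhood $V_j^\pm$ of $\pr(\Sigma)$. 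On such a $V_j^\pm$ each $k\neq 0$ term of \eqref{eq:Fried1} is a smooth function of $(y,z)$ for $y<1$ plus a distribution conormal to $\{y=1\}$ which — because $\arccosh y = \sqrt{2(y-1)}\big(1+O(y-1)\big)$ — is a smooth function of $y-1$ times $(y-1)_+^{1/2}$, one half order smoother at $\{y=1\}$ than a jump (the two arctangents for $k\neq 0$ each contain only the \emph{single} small quantity $\arccosh y$, their numerators being bounded away from $0$ precisely because $\alpha_j k\notin\{0,\pm 2\pi\}$). The $\arctan$ expansion gives $G(y,z+\alpha_j k) = O\big(\arccosh y\cdot|k|^{-2}\big)$ with $(y,z)$-derivatives bounded by summable powers of $|k|$, so the periodized sums converge in this conormal class and $G_{\alpha_1}-G_{\alpha_2}$ is conormal to $\{y=1\}$ with no singularity at the geometric front.

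Finally I would push this through $\bmA$: $A_1=[\partial_y]^{1/2}$ raises the conormal order by $\tfrac12$, $A_2=F^*$ pulls $N^*\{y=1\}$ back to $N^*\{t=r_1+r_2\}=\Lambda^\diff$ (with the dimension shift $(2-5)/4$), and $A_3$ is multiplication by a smooth nonvanishing factor; tracking these exactly as in \cite{Fri} yields $\bmE_{\alpha_1}-\bmE_{\alpha_2}\in I^{-7/4}(V_j^\pm;\Lambda^\diff)$ — purely diffractive, and one half order smoother than the leading order of $\bmE_\alpha$ on $\Lambda^\diff$ in Theorem~\ref{thm:structure-sine-prop}, as it must be since the cancelled $k=0$ term carried the leading diffractive singularity near $\Sigma$. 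The hard part will be the two points in the previous paragraph: verifying that for $\alpha_j>\pi$, $\alpha_j\neq 2\pi$ no front other than the (cancelled, $k=0$) geometric front meets a small $V_j^\pm$, and establishing the uniform-in-$k$ conormal estimates that make the periodized tail a conormal distribution of the stated order rather than merely a pointwise-convergent series; the covering reduction for the remaining resonant and small-angle cones should then be routine.
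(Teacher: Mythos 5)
Your overall route is the paper's route: Friedlander's representation $\bmE_\alpha=\bmA G_\alpha$, the observation that near $\pr(\Sigma)$ the angle dependence of $G_\alpha$ sits only in translates of $G$ whose singular support meets a small neighbourhood of $(y,z)=(1,\pm\pi)$ only along $\{y=1\}$, and then the order count $-\tfrac32\to-1\to-\tfrac74$ through $A_1$, $F^*$, $A_3$, with a covering argument for the remaining angles. The paper organizes the cancellation slightly differently (cutoffs $G=G^0+G^++G^-+G^\infty$ in $z$, and the standing assumption $\alpha_j>2\pi$ so that \emph{no} $k\neq0$ translate of the near-$\Sigma$ pieces enters the window at all, the tail being just $G^\infty_{\alpha_j}$); your variant cancels the $k=0$ term and works for $\pi<\alpha_j\neq2\pi$, at the price of the uniform conormal estimates for the $k\neq0$ tail that you defer. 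Note that for $\pi<\alpha_j<2\pi$ the $k=\mp1$ translate's Heaviside region $\{|z+\alpha_jk|\le\pi\}$ \emph{does} enter $V^\pm$, so near $(1,\pm\pi)$ that term is not only "two arctangents with nonvanishing numerators"; it is still singular only at $\{y=1\}$ there (its geometric front stays away precisely because $\alpha_jk\notin\{0,\mp2\pi\}$), which is all you need.

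The one step that is wrong as stated is the covering reduction: the deck-transformation images $\bmE_{N\alpha}(t,r_1,\theta_1+m\alpha;r_2,\theta_2)$, $m\neq0$, are \emph{not} $\calC^\infty$ near a geometrically diffractive geodesic. The diffracted front $\{t=r_1+r_2\}$ is unchanged by the angular shift, and finite propagation speed only tells you these terms vanish for $t<r_1+r_2$; they are singular exactly at that front. The correct (and sufficient) statement is that, provided $m\alpha\not\equiv 0,\mp2\pi \pmod{N\alpha}$ for $1\le m\le N-1$, each image is microsupported on $\Lambda^\diff$ away from $\Sigma$ and away from $\Lambda^\geom$, hence lies in $I^{-\frac74}(\Lambda^\diff)$ by the classical description of the wave kernel away from the intersection, and so can be absorbed into the right-hand side of \eqref{sine-wave-difference}. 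The excluded case, $\alpha=2\pi/m$, is genuinely exceptional rather than routine: for such angles an image carries a geometric front through $\pr(\Sigma)$ (already for $\alpha_1=2\pi$ the free kernel is geometrically singular on both sides of $\theta_1-\theta_2=\pm\pi$, while a cone of angle $>2\pi$ is in shadow on one side, so $\bmE_{2\pi}-\bmE_{\alpha_2}$ is not purely diffractive on any neighbourhood of $\pr(\Sigma)$). These resonant angles have vanishing diffraction coefficient and must be excluded or treated separately; the paper's own one-line extension to general angles is equally silent on this point, but your write-up should not claim it follows from finite propagation speed.
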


\begin{proof}
  Let us start with the case where $\alpha_1$ and $\alpha_2$ are both greater
  than $2\pi$.  From Proposition~\ref{thm:Friedlander-construction} we know that
  $\bmE_\alpha = \bm{A} G_\alpha$, so we may prove this proposition by showing
  an analogous statement for the periodized function $G_\alpha$.  We note that
  the projection of $\Sigma$ to the base
  $\bbR_t \times C_\alpha^\circ \times C_\alpha^\circ$ corresponds to
  \begin{equation*}
    y = \frac{t^2 - r_1^2 - r_2^2}{2 r_1 r_2} = 1 \quad \text{and} \quad z =
    \theta_1 - \theta_2 = \pm \pi
  \end{equation*}
  in the original $(y,z)$-coordinates used to define $G$.

  Let $\alpha_* \defeq \min(\alpha_1,\alpha_2)$, and set
  $\varepsilon \defeq \frac{1}{8} \left( \alpha_* - 2\pi \right)$.  Choose a
  smooth bump function $\rho \in \calC^\infty_\upc(\bbR_z)$ satisfying
  $\rho(z) \equiv 1$ when $|z| < \frac{\varepsilon}{2}$ and $\rho \equiv 0$ when
  $|z| > \varepsilon$.  From $\rho$ and $G$ we define the following:
  \begin{equation*}
    \begin{alignedat}{3}
      \rho^\pm(z) &\defeq \rho(z \mp \pi) & \qquad G^\pm(y,z) &\defeq G(y,z) \,
      \rho^\pm(z) \\
      \rho^0(z) &\defeq \left(1 - \rho^+(z) - \rho^-(z) \right) \cdot
      \mathbf{1}_{\{|z| \leqslant \pi\}}(z) & G^0(y,z) &\defeq G(y,z) \,
      \rho^0(z) \\
      \rho^\infty(z) &\defeq \left(1 - \rho^+(z) - \rho^-(z) \right) \cdot
      \mathbf{1}_{\{|z| \geqslant \pi\}}(z) & G^\infty(y,z) &\defeq G(y,z) \,
      \rho^\infty(z) .
    \end{alignedat}
  \end{equation*}
  Thus, $G = G^0 + G^\infty + G^+ + G^-$. Using
  \eqref{eq:Lagn-relation-Fried-A}, and the calculus of wavefront sets, we see
  that these pieces of $G$ correspond to the geometric wavefront, the diffracted
  wavefront, and a small neighbourhood of $\Sigma$, respectively, after
  periodization and the application of $\bmA$.

  Now, consider the $\alpha_j$-periodizations
  $ G^\bullet_{\alpha_j}(y,z) \defeq \sum_{k \in \bbZ} G^\bullet(y,z + \alpha_j
  \cdot k)$
  of these distributions for $j = 1$ and $2$.  By choosing $\varepsilon$ as
  above, so that $\alpha_j > 2\pi + 2\varepsilon$, we have
  on the set
  $\big\{ z \in \left( - \pi - \varepsilon, \pi + \varepsilon \right) \big\}$
  \begin{equation*}
    G_{\alpha_1}(y,z) - G_{\alpha_2}(y,z) = G^\infty_{\alpha_1}(y,z) - G^\infty_{\alpha_2}(y,z)
  \end{equation*}
  since $G^\pm_{\alpha_1} = G^\pm_{\alpha_2}$ and
  $G^0_{\alpha_1} = G^0_{\alpha_2}$ here.  Therefore, if we view $\bmE_\alpha$
  as the restriction of $\bmA G_\alpha$ to the fundamental domain
  $\left[ - \frac{\alpha}{2}, \frac{\alpha}{2} \right)$ for the periodization,
  we may set
  $V_j^\pm = V^\pm \defeq F^{-1} ( \bbR \times (\pm \pi - \varepsilon, \pm \pi +
  \varepsilon))$ and conclude
  \begin{equation*}
    \bmE_{\alpha_1} - \bmE_{\alpha_2} \Big\vert_{V^\pm} = \bmA \left[ G^\infty_{\alpha_1} - G^\infty_{\alpha_2} \right]
    \Big\vert_{V^\pm} \in I^{-\frac{7}{4}} \! \left( V^\pm, \Lambda^\diff \right),
  \end{equation*}
  since $A G^\infty$ is purely diffractive. This establishes the result in the
  case $\alpha_1, \alpha_2 > 2\pi$.

  Finally, to extend to general cone angles $\alpha$ we use the method of
  images:  distributions on $C_\alpha$ may be represented as $\alpha$-periodic
  distributions on its $N$-fold cover $C_{N\alpha}$, where $N$ is any positive
  integer.  The result holds using $\bmE_{N \alpha}$ in place of $\bmE_\alpha$
  by the above, and we may recover the result for $\bmE_\alpha$ by restricting
  to a single period of length $\alpha$ in the angular variables.
\end{proof}

\begin{proof}[Proof of Theorem~\ref{thm:structure-sine-prop}]
  Theorem~\ref{thm:structure-sine-prop} follows immediately from
  Proposition~\ref{thm:4pi-propagator-structure} and
  Proposition~\ref{thm:ml-structure-sing-set-ind}.
\end{proof}

We conclude the microlocal structure of the half-wave kernel
$\bmU \defeq \calK \! \left[ e^{- it \sqrt\Delta} \right]$ as a corollary of
this result.

\begin{corollary}\label{cor:UisMU}
  The Schwartz kernel $\bmU$ of the half-wave group
  $\calU(t) \defeq e^{ -it\sqrt\Delta}$ on
  $\bbR \times C_\alpha^\circ \times C_\alpha^\circ$ is an intersecting
  Lagrangian distribution in the class
  \begin{equation*}
    r_1^{-\frac{1}{2}} r_2^{-\frac{1}{2}} I^{-\frac{1}{4}}\!\left(
      \bbR \times C_\alpha^\circ \times C_\alpha^\circ ;
      \Lambda^\diff_+, \Lambda^\geom_+ \right),
  \end{equation*}
  where $( \Lambda^\diff_\pm, \Lambda^\geom_\pm )$ is the forward/backward part
  of the intersecting pair $( \Lambda^\diff , \Lambda^\geom)$, i.e., the pair
  given by intersecting $( \Lambda^\diff, \Lambda^\geom )$ with
  $\left\{ (t,\tau) \times T^* C_\alpha^\circ \times T^* C_\alpha^\circ : \mp
    \tau > 0 \right\}$.
\end{corollary}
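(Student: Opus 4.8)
The plan is to deduce Corollary~\ref{cor:UisMU} from Theorem~\ref{thm:structure-sine-prop} by means of one purely temporal operator identity relating the half‑wave kernel to the sine kernel, together with the microlocal splitting of a paired‑Lagrangian distribution according to the sign of the time covector $\tau$. First I would record the identity $\calU(t) = 2i\,D_t\,\bfW_+(t)$, where $\bfW_+(t)$ is the forward part of the sine propagator $\bfW(t)$ (the part microsupported where $\tau<0$). This is immediate on the spectral side: $\frac{\sin(t\lambda)}{\lambda} = \frac{e^{it\lambda}}{2i\lambda} - \frac{e^{-it\lambda}}{2i\lambda}$ is precisely the splitting of $\frac{\sin(t\lambda)}{\lambda}$ into its backward and forward halves, the latter being $\frac{i}{2\lambda}e^{-it\lambda}$ (supported on $\tau<0$, i.e.\ the ``$+$'' piece in the notation of the paper), and $2i\,D_t\bigl(\frac{i}{2\lambda}e^{-it\lambda}\bigr) = e^{-it\lambda}$. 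Passing to Schwartz kernels on $\bbR_t\times C_\alpha^\circ\times C_\alpha^\circ$, the identity reads $\bmU = 2i\,D_t\,\bmE_+$, with $\bmE_+$ the Schwartz kernel of $\bfW_+(t)$; note that $\bmU$ is then automatically microsupported in $\{\tau\le 0\}$, consistent with the forward Lagrangians appearing in the corollary.

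Next I would pin down the class of $\bmE_+$. By Theorem~\ref{thm:structure-sine-prop}, $\bmE \in (r_1 r_2)^{-\frac{1}{2}}\,I^{-\frac{5}{4},-\frac{1}{2}}(\bbR\times C_\alpha^\circ\times C_\alpha^\circ;\Lambda^\diff,\Lambda^\geom)$, and its wavefront set lies in $\Lambda^\diff\cup\Lambda^\geom$. On those Lagrangians the time covector obeys $|\tau| = |\xi|$, where $\xi$ is either of the two spatial momenta (since $t = r_1+r_2$ and $t = \dist(\bmq_1,\bmq_2)$ both have unit gradient in each spatial variable), so $\tau$ is comparable to $|\xi|$ and in particular never vanishes away from the zero section. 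Hence the decomposition $\bmE = \bmE_+ + \bmE_-$ by the sign of $\tau$ is microlocally well defined; it is realized by a microlocal cutoff $\chi(D_t)$ of order $0$ that fixes $\Lambda^\diff$ and $\Lambda^\geom$ while restricting each to its forward half, so that $\bmE_\pm \in (r_1 r_2)^{-\frac{1}{2}}\,I^{-\frac{5}{4},-\frac{1}{2}}(\Lambda^\diff_\pm,\Lambda^\geom_\pm)$ modulo $\calC^\infty$. (The kernel of $\bfW_+(t)$ agrees with $\bmE_+$ modulo $\calC^\infty$, because $\bfW_-(t) = \overline{\bfW_+(t)}$ is microsupported where $\tau\ge 0$.)

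Finally, $D_t$ is a differential operator of order $1$ on $\bbR_t\times C_\alpha^\circ\times C_\alpha^\circ$, so by the Melrose--Uhlmann calculus it carries $I^{-\frac{5}{4},-\frac{1}{2}}(\Lambda^\diff_+,\Lambda^\geom_+)$ into $I^{-\frac{1}{4},-\frac{1}{2}}(\Lambda^\diff_+,\Lambda^\geom_+)$ --- which is $I^{-\frac{1}{4}}(\Lambda^\diff_+,\Lambda^\geom_+)$ in the single‑index convention of Definition~\ref{def:ILD} --- and fixes the two Lagrangians; concretely, applied to an oscillatory‑integral representative \eqref{intlegm} of $\bmE_+$ it produces the factor $D_t\phi$, which is homogeneous of degree $1$ in the fibre variables, so the symbol order goes up by one (the term in which $D_t$ falls on the amplitude being of lower order). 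Since the prefactor $(r_1 r_2)^{-\frac{1}{2}}$ is $t$‑independent it passes through $D_t$, and multiplying by the constant $2i$ and invoking $\bmU = 2i\,D_t\,\bmE_+$ yields $\bmU \in r_1^{-\frac{1}{2}} r_2^{-\frac{1}{2}}\,I^{-\frac{1}{4}}(\bbR\times C_\alpha^\circ\times C_\alpha^\circ;\Lambda^\diff_+,\Lambda^\geom_+)$, which is the assertion. The one step that deserves genuine care is the microlocal splitting of $\bmE$ by the sign of $\tau$ --- that it lands $\bmE$ in the forward paired‑Lagrangian class without creating new singularities --- and this is exactly what the comparability $|\tau|\simeq|\xi|$ on $\Lambda^\diff\cup\Lambda^\geom$ guarantees; the remainder is index bookkeeping and the usual half‑density conventions.
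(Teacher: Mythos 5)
Your proposal is correct and rests on exactly the same ingredients as the paper's proof: Theorem~\ref{thm:structure-sine-prop}, the fact that a $t$-derivative raises the order by one while fixing the pair $(\Lambda^\diff,\Lambda^\geom)$, and the observation that $e^{\mp it\sqrt\Delta}$ is annihilated by $D_t\pm\sqrt\Delta$, so its wavefront set lies in $\{\mp\tau>0\}$. The only difference is the order of operations --- you split the sine kernel by the sign of $\tau$ and then apply $2iD_t$ to the forward part, whereas the paper first differentiates to obtain $\cos(t\sqrt\Delta)\in I^{-1/4}(\Lambda^\diff,\Lambda^\geom)$ and then identifies $e^{-it\sqrt\Delta}$ with $2\cos(t\sqrt\Delta)$ microlocally on the forward Lagrangians via $\cos(t\sqrt\Delta)=\tfrac12\bigl(e^{-it\sqrt\Delta}+e^{it\sqrt\Delta}\bigr)$ --- which is an inessential variation.
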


\begin{proof}
  We know from Theorem~\ref{thm:structure-sine-prop} that $\bfW$, the sine
  kernel, is in the class
  $I^{-5/4}(\RR \times C^\circ_\alpha \times C^\circ_\alpha; \Lambda^\diff,
  \Lambda^\geom)$.
  By taking a derivative in $t$, we find that $\cos t \sqrt{\Delta}$ is in the
  class
  $I^{-1/4}(\RR \times C_\alpha \times C_\alpha; \Lambda^\diff,
  \Lambda^\geom)$. We can write
$$
\cos t \sqrt{\Delta} = \frac1{2} \Big( e^{-it \sqrt{\Delta}} +
e^{it\sqrt{\Delta}} \Big).
$$
Since $e^{ \mp it \sqrt{\Delta}}$ is annihilated by the operator
$(D_t \pm \sqrt{\Delta})$, which has symbol $\tau \pm |\xi|$, we see that its
wavefront set is contained in $\{ \mp \tau > 0 \}$.  Therefore,
$e^{\mp i t \sqrt{\Delta}}$ is microlocally identical to $2\cos t \sqrt{\Delta}$
on $\Lambda^\diff_{\pm}$, and microlocally trivial on $\Lambda^\diff_{\mp}$.
\end{proof}

\begin{remark}
  In \cite{Hil}, a similar argument is used to pass from the sine kernel to the
  half-wave kernel but the factor of $2$ has been incorrectly omitted.
\end{remark}

\begin{example}\label{ex:U4piasMU}
  Starting from expression \eqref{eq:EasMU}, this procedure yields the following
  expression. On the cone of angle $4\pi$, for $\theta_1$ close to $\pi$ and
  $\theta_2$ close to $0$ we have:
  \[\displaystyle
  \bmU_{4\pi}(t,\bmq_1,\bmq_2)m \sim \frac{-i}{4\pi^2} \int_{
    0}^\infty\int_{0}^\infty e^{i\phi(t,\bmq_1,\bmq_2,s,\omega)}
  \frac{\sin\big(\frac{\theta_1(s)+\theta_2(s)}{2}\big)}{\big(r_1(s)r_2(s)\big)^{\frac{1}{2}}}\cdot
  \omega \, dsd\omega \big|d\bmq_1d\bmq_2\big|^{\frac{1}{2}},
  \]
 (where $\sim$ means equal modulo $C^\infty$) in which $\phi,\,r_j(s),\,\theta_j(s)$ are defined as in \eqref{eq:EasMU}.
\end{example}

\begin{remark}
  We emphasize that the novelty in Theorem~\ref{thm:structure-sine-prop} is the
  precise determination of the structure of the wave kernel near the singular
  set $\Sigma$, the intersection between $\Lambda^\geom$ and
  $\Lambda^\diff$. Indeed, the Lagrangian structure of the wave kernel near
  $\Lambda^\geom \setminus \Lambda^\diff$ (where the cone point plays no role,
  due to finite speed of propagation) follows from classical work of H\"ormander
  \cite{Hor0} (also together with Duistermaat \cite{DuiHor}). On the other hand,
  on metric cones (of any dimension), Cheeger and Taylor showed that the
  wavefront set of the wave kernel is contained in
  $\Lambda^\geom \cup \Lambda^\diff$ and showed the Lagrangian structure of the
  wave kernel near $\Lambda^\diff \setminus \Lambda^\geom$
  \cite{CheTay1}*{Section 2}, \cite{CheTay2}*{Section 5}.  More generally, on
  spaces with cone-like singularities, Melrose and Wunsch \cite{MelWun} proved
  that the wavefront set of the wave kernel is contained in
  $\Lambda^\geom \cup \Lambda^\diff$; morover, they also showed that the
  diffractive singularity is $(n-1)/2$-order more regular than the geometric
  singularity. Notice that this difference in order agrees with our results,
  since for an intersecting Lagrangian distribution, the order on $\Lambda_0$
  (here, the diffractive Lagrangian) is always smaller than the order on
  $\Lambda_1$ (here, the geometric Lagrangian) by $\frac{1}{2}$. This also shows
  that our result is restricted to dimension 2: in higher dimensions, it
  \emph{cannot} be true that the wave kernel on a cone is in the Melrose-Uhlmann
  calculus. In the latter case it would be interesting to know whether the
  kernel lies in the class of distributions that are constructed in
  \cite{GuiUhl} and that generalize the Melrose-Uhlmann construction.
\end{remark}

\subsection{Proof of Theorem \ref{thm:intro-kernel}}
\label{sec:proof}

Let us first consider the case of a diffractive geodesic of length $t^*$ joining
$\bmq_2^*$ to $\bmq_1^*$ with a diffraction angle of $\pi.$
\begin{remark}
  It may seem peculiar to use $\bmq_2$ as the starting point and $\bmq_1$ as the
  final point of the geodesic, but this is coherent with searching for an
  expression for $\bmU(t,\bmq_1,\bmq_2).$
\end{remark}

We can use a Euclidean system of coordinates such that
\begin{itemize}
\item $\bmq_2^*$ corresponds to $(-r_2^*,0),$
\item the geodesic corresponds to the horizontal line starting from
  $(-r_2^*,0).$
\end{itemize}
This Euclidean coordinate system can be uniquely extended to a local
isometry\footnote{If $\alpha>2\pi$ this isometry is actually one-to-one onto its
  range.} from $\bbR^2\setminus \{ (0,y),~y>0\}$ into $C_\alpha.$ We will freely
use this local isometry to identify points $(\bmq_1,\bmq_2)$ in a neighbourhood
of $(\bmq_1^*,\bmq_2^*)$ with their preimages in $\bbR^2.$

The point $\bmq_1^*$ corresponds to $(r_1^*,0)$ in this system of Euclidean
coordinates. The geodesic between $\bmq_2$ and $\bmq_1$ is horizontal and it can
be seen that it is geometrically diffractive with angle $+\pi$ since it is the
limit of horizontal geodesics approaching from below.  For $s\geq 0$, we denote
by $\bmp_{+}(s)$ the point with coordinates $(0,-s)$ in this Euclidean system
and we set
\[
\phi_+(t,\bmq_1,\bmq_2,s,\omega)\,\defeq\, \Big[
\big|\bmq_2-\bmp_+(s)\big|+\big|\bmq_1-\bmp_+(s)\big|-t\Big]\cdot \omega,
\]
where $\big|\bmq -\bmq'\big|$ denotes the Euclidean distance in $\bbR^2.$

When the angle of diffraction is $-\pi$ we can proceed similarly. The
diffractive geodesic is now the limit of horizontal geodesics from above and the
cut is now $\{ (0,y),~y< 0\}.$ We then define $\bmp_-(s)\defeq (0,s)$ and
\[
\phi_-(t,\bmq_1,\bmq_2)\defeq \Big[
\big|\bmq_2-\bmp_-(s)\big|+\big|\bmq_1-\bmp_-(s)\big|-t\Big]\cdot \omega
\]

\begin{lemma}
  In either situation, locally near
  $(t^*,\bmq_1^*,\bmq_2^*)\in \bbR\times C^\circ_\alpha\times C^\circ_\alpha,$
  $\phi_{\pm}$ is a phase function for the intersecting pair
  $(\Lambda_+^\geom,\Lambda_+^\diff).$
\end{lemma}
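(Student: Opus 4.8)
The plan is to verify directly the four conditions of Definition~\ref{def:intLeg-param}, for the phase $\phi_+$ (the case of $\phi_-$ being obtained by reflecting the cut half--plane and changing the sign of $s$), taking $\Lambda_0 \defeq \Lambda^\diff_+$ (the Lagrangian recovered at $s=0$, carrying the lower order) and $\Lambda_1 \defeq \Lambda^\geom_+$ (the one recovered at the $s$-critical points). Here the base variables are $x = (t,\bmq_1,\bmq_2)\in\bbR\times\bbR^2\times\bbR^2$, where we use the chosen local isometry to view $\bmq_1,\bmq_2$ as points of $\bbR^2$; the single fibre variable is $\theta=\omega$, restricted to the open cone $\{\omega>0\}$; and $s\geq 0$ is the extra parameter. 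Note $\phi_+$ is homogeneous of degree $1$ in $\omega$, and that with $\omega>0$ the covector $d_x\phi_+ = \omega\,d_x(\,\cdot\, - t)$ has $\tau = -\omega < 0$, so the construction will land in the forward pieces, consistently with the subscript.

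The single geometric fact behind conditions (1) and (2) is that $\bmp = \bmp_+(0)$ lies in the \emph{interior} of the Euclidean segment joining $\bmq_2^* = (-r_2^*,0)$ to $\bmq_1^* = (r_1^*,0)$; this is exactly the statement that the diffraction angle is $+\pi$. Writing $\ell(s)\defeq |\bmq_1 - \bmp_+(s)| + |\bmq_2 - \bmp_+(s)|$ (smooth, strictly convex in $s$), interiority of $\bmp$ forces $\ell'(0)=0$ at $(\bmq_1^*,\bmq_2^*)$, so $\partial_s\phi_+ = \omega\,\ell'(s)$ vanishes at the base point, while $\partial_\omega\phi_+ = \ell(s) - t$ vanishes there because $\ell(0) = r_1^* + r_2^* = t^*$; this gives (1). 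For (2), since $\phi_+$ is linear in $\omega$ one computes at the base point $d_{x,\omega}(\partial_\omega\phi_+) = -\,dt + dx_1 - dx_2$ (involving only $dt,dx_1,dx_2$), whereas using $y_1^* = y_2^* = 0$ a short computation gives $d_{x,\omega}(\partial_s\phi_+) = \omega^*\big((r_1^*)^{-1}dy_1 + (r_2^*)^{-1}dy_2\big) + \omega^*\ell''(0)\,ds$ (the $(x,\omega)$-part involving only $dy_1,dy_2$). These are manifestly linearly independent, and $\ell''(0)\neq 0$ gives in addition that $C_1$ is transverse to $\{s=0\}$ and that the intersection at $s=0$ is clean; the same two computations show that $\phi_+|_{s=0}$ is a nondegenerate phase function in $\omega$ alone.

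Condition (3) is then immediate: $\phi_+(t,\bmq_1,\bmq_2,\omega,0) = (r_1 + r_2 - t)\,\omega$ is the standard nondegenerate phase function parametrizing $N^*\{t = r_1 + r_2\} = \Lambda^\diff$, and restricting to $\omega>0$ (hence $\tau<0$) gives a diffeomorphism from $C_0$ onto a neighbourhood of the base point in $\Lambda^\diff_+$.

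Condition (4) is the substantive step, and the one I expect to be the main obstacle. On $C_1$ with $s>0$, the equation $\partial_s\phi_+=0$ reads $\ell'(s)=0$; by strict convexity $\ell$ has a unique critical point $s = s(\bmq_1,\bmq_2)$ near $0$, depending smoothly on the base point by the implicit function theorem (using $\ell''(0)\neq0$), and there $\bmp_+(s)$ lies on the segment $[\bmq_2,\bmq_1]$, so $\ell(s) = |\bmq_1 - \bmq_2|$. Eliminating $s$ by the stationary-phase/envelope reduction (exactly as in the proof of Proposition~\ref{prop:int-pair-phasefunction}(ii)), the resulting phase is equivalent to $(|\bmq_1 - \bmq_2| - t)\,\omega$; and since $\partial_s\phi_+ = 0$ along $C_1$, the covector $d_x\phi_+$ there equals its partial in $x$ at frozen $s$, and the unit vectors from $\bmp_+(s)$ to $\bmq_1$ and to $\bmq_2$ being then collinear with those from $\bmq_2$ to $\bmq_1$ and from $\bmq_1$ to $\bmq_2$, one gets $d_x\phi_+ = \omega\,d_x(|\bmq_1-\bmq_2| - t)$. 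Finally, in the chosen Euclidean coordinates the nearby non-diffractive geodesic on $C_\alpha$ is precisely this Euclidean segment, of length $\sqrt{r_1^2 + r_2^2 - 2r_1 r_2\cos(\theta_1-\theta_2)}$ with $|\theta_1 - \theta_2|\leq\pi$ (the geometric geodesic passes on the side of $\bmp$ contained in the coordinate patch), so $N^*\{t = |\bmq_1-\bmq_2|\}$ is exactly $\Lambda^\geom$; restricting to $\omega>0$ selects $\Lambda^\geom_+$, and the boundary $\{s=0\}$ of $C_1$ maps onto $\Sigma = \partial\Lambda^\geom_+$. The real work in (4) is thus the envelope reduction together with this geometric identification and the bookkeeping of the branch $|\theta_1-\theta_2|\leq\pi$ and the sign of $\tau$; everything else is routine once the observation $\ell'(0)=0$ is in place.
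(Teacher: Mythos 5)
Your proposal is correct and takes essentially the same route as the paper: the paper's argument (given for the phase \eqref{Phi-phase} in Section~\ref{sec:full-wave-propagator} and relied on implicitly for $\phi_\pm$) likewise checks that the $s=0$ slice parametrizes $\Lambda^\diff_+$ and that eliminating $s$ at its nondegenerate critical point gives the equivalent phase $(|\bmq_1-\bmq_2|-t)\,\omega$ parametrizing $\Lambda^\geom_+$. You simply spell out the verification of conditions (1)--(2) of Definition~\ref{def:intLeg-param} (the vanishing of $\partial_s\phi_\pm$ at the base point, the nondegeneracy $\ell''(0)=1/r_1^*+1/r_2^*>0$, and the sign bookkeeping $\tau=-\omega<0$) that the paper asserts without detail.
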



According to corollary \ref{cor:UisMU} and to section \ref{sec:int-Lagn-distns}
there exists a symbol $a_\alpha$ such that, locally near
$(t^*,\bmq^*_1,\bmq_2^*),$ we have the expression
\[
\bmU_\alpha(t,\bmq_1,\bmq_2) =(2\pi)^{-2} \int_{s\geq 0}\int_{\omega>0}
e^{i\phi_\pm(t,\bmq_1,\bmq_2,s,\omega)}
a_{\alpha,\pm}(t,\bmq_1,\bmq_2,s,\omega)\,dsd\omega \big|
d\bmq_1d\bmq_2\big|^{\frac{1}{2}}.
\]
Moreover, $a_{\alpha,\pm}$ has an asymptotic expansion of the form
\begin{equation}\label{eq:expaalpha}
  a_\alpha\sim \sum_{k\geq 0} a_{\alpha,\pm,1-k}(\bmq_1,\bmq_2,s) \, \omega^{1-k}.
\end{equation}

The only thing left to prove is the relation with the geometric theory of
diffraction. This is done by computing the leading amplitude of $\bmU_\alpha$ near
the diffracted front and away from $\Sigma$, and comparing it with Proposition
\ref{prop:hwkgtd} in the Appendix.

Starting from the preceding expression and using the methods and results of
section \ref{sec:int-Lagn-distns}, the leading term on the diffracted front is
given by
\[
\bmU_\alpha(t,\bmq_1,\bmq_2) \sim -(2\pi)^{-2} \int_{\omega>0}
e^{i\phi_\pm(t,\bmq_1,\bmq_2,0,\omega)}
\frac{a_{\alpha,\pm,1}(\bmq_1,\bmq_2,0)\omega} {i\partial_s
  \phi_{\pm}(t,\bmq_1,\bmq_2,0,\omega)} d\omega \big|
d\bmq_1d\bmq_2\big|^{\frac{1}{2}}.
\]

We compute
$\partial_s \phi_{\pm}(t,\bmq_1,\bmq_2,s=0,\omega) =\pm \big( \sin
\theta_1+\sin \theta_2 \big)\omega,$
and compare with equation \ref{eq:hwkgtd}. We obtain

\[
- \frac{a_{\alpha,\pm,1}(\bmq_1,\bmq_2,s=0)}{\pm i \big( \sin \theta_1+\sin
  \theta_2\big)}= 2\pi(r_1r_2)^{-\frac{1}{2}} \,
S_\alpha(\theta_1-\theta_2)
\]
so that finally
\begin{equation}
\label{eq:as=0}
  a_{\alpha,\pm, 1}(t,\bmq_1,\bmq_2,s=0,\omega) \sim \mp 2\pi  i
  \cdot \frac{S_\alpha(\theta_1-\theta_2)}{\big(
    r_1r_2)^{\frac{1}{2}}}\cdot \big[ \sin \theta_1+\sin \theta_2\big ]\cdot \omega.
\end{equation}
This is the last statement in Theorem \ref{thm:intro-kernel}.

\begin{remark}
  This formula actually gives a way of computing $S_\alpha$ if we know the
  symbol in the Melrose-Uhlmann representation. For instance, starting from the
  formula in example \ref{ex:U4piasMU} for the propagator near a diffractive
  geodesic with an angle $-\pi$ on a cone of angle $4\pi$ we derive
  \[
  a_{4\pi,-,1}(\bmq_1,\bmq_2,s)=-i \left(
  r_1(s)r_2(s)\right)^{-\frac{1}{2}}\sin \! \left(
  \frac{\theta_1(s)+\theta_2(s)}{2}\! \right)
  \]
  The preceding formula thus yields

  \begin{equation*}
    \begin{aligned}
      S_{4\pi}(\theta_1-\theta_2)& = \frac{-2\pi}{-i\left(\sin \theta_1 +\sin
        \theta_2\right)} \cdot \frac{-i}{4\pi^2} \cdot \sin \left(
      \frac{\theta_1(0)+\theta_2(0)}{2}\right) \\
      & =\frac{-1}{2\pi}\frac{\sin \left( \frac1{2} (\theta_1+\theta_2) \right)}{\left(\sin \theta_1 +\sin
        \theta_2\right)}\\
      & =\frac{-1}{4\pi} \Big(\cos
        \big(\frac{\theta_1-\theta_2}{2}\big) \Big)^{-1}.
    \end{aligned}
  \end{equation*}
  This agrees with the formula \eqref{eq:S4pi} in Appendix \ref{sec:GTD}.
\end{remark}

\begin{remark}
  It is interesting to note that $a_{\alpha,\pm,1}(\bmq_1,\bmq_2,s=0,\omega)$ is
  actually a regularization of the symbol on the diffracted front. The latter
  blows up when approaching the intersection and this formula gives
  an effective way of regularizing the contribution of a diffractive
  geodesic when the diffraction angle approaches $\pm\pi$ (compare with the
  approach of \cite{BogoPavSch}).
\end{remark}


\section{The wave kernel after two geometric diffractions}
\label{sec:wave-kernel-two-diffractions}

Theorem \ref{thm:intro-kernel} can be used to understand the half-wave
propagator on a ESCS after microlocalization along a particular diffractive
geodesic. We will now present two applications of this method. A systematic
study leading to a better knowledge of wave-invariants of a ESCS will be done
elsewhere.

Now that we have the basic structure of the half-wave kernel on the cone
$C_\alpha$, we next determine the structure of the kernel after two diffractions
on a Euclidean surface with conic singularities (ESCS).  While one could
continue to calculate the structure for an arbitrary number of diffractions and
any kind of diffraction, we will focus on two geometric diffractions since this
is the first case for which our approach yields a significant improvment on the
existing literature.

Let $X$ be an ESCS as described in the Section \ref{sec:introduction}, and let
$\bmq_1^*$ and $\bmq_2^*$ be two points in $X$ with a geodesic $\gamma$ of
length $t^* > 0$ between them. Denote also by $\xi_i^*$ the covector in
$T_{\bmq_i^*}^*X$ of the bicharacteristics that projects onto $\gamma.$

Our aim is to find an oscillatory integral representation of the Schwartz kernel
of the operator
\begin{equation*}
  A_1 \, \calU(t) \, A_2 ,
\end{equation*}
where $A_i \in \Psi^0(\Sigma^\circ)$ is microlocalizing near
$(\bmq_i^*, \xi_i^*)$ and $\calU(t) \defeq e^{-it\sqrt{\Delta}}$ is the
half-wave kernel at time $t$ with $t$ close to $t^*$.

In order to fix notations we assume the following.  The geodesic starts at
$\bmq_2^*$ then hits a cone point $\bmp_2$ then a cone point $\bmp_1$ and
finally ends at $\bmq_1^*.$ We denote by $a$ the distance (along this geodesic)
from $\bmq_2^*$ to $\bmp_2^*,$ by $b$ the distance between $\bmp_2$ and $\bmp_1$
and by $c=t^*-(a+b)$ the distance from $\bmp_1$ to $\bmq_1^*.$ Moreover, we
suppose that this geodesic passes \emph{geometrically} through both two cone
points $\bmp_2$ and $\bmp_1$; i.e., $\gamma$ is locally a limit of
non-diffractive geodesics.

Every geodesic with only one diffraction, which is geometric, is a limit of nondiffractive
geodesics.
For a general diffractive geodesic with several geometric diffractions, it may
happen that, locally,  the geodesic is a limit of non-diffractive
geodesics, but not globally. However, in our case, since $\gamma$ has only two
geometric diffractions, it is always such a limit of non-diffractive geodesics
(see \cite{Hil2}).  We show this by generalizing the
construction we did for the geometric diffractive geodesic on a
cone\footnote{This construction is the same as the rectangles with slits that
  are used in \cite{Hil2}}.

We start with a Euclidean coordinate system at $\bmq_2^*$ such that $\bmq_2^*$
corresponds to $(-a,0)$ and the geodesic is horizontal and we try to extend this
coordinate system. In the extended system the geodesic will correspond to the
horizontal segment that joins $(-a,0)$ to $(b+c,0)$ so that $\bmp_2$ will
correspond to $(0,0)$ and $\bmp_1$ to $(b,0).$ We remove from $\bbR^2$ the cuts
\begin{equation*}
  \begin{aligned}
    \mbox{cut}_2&\defeq \big \{ (0,\epsilon_2 s),~s>0\big \},\\
    \mbox{cut}_1& \defeq \big \{ (b,\epsilon_1 s),~s>0\big \},
  \end{aligned}
\end{equation*}
in which $\epsilon_i,\,i=1,2$ is such that the angle of diffraction at $\bmp_i$
is $\epsilon_i\pi.$ Exploiting the flatness of $X$, the original coordinate
system can be extended to a local isometry from an open set
$V\subset \bbR^2\setminus \big( \mbox{cut}_1\cup \mbox{cut}_2\big)$ that
contains the horizontal segment. If both $\epsilon_i$ have the same sign then
$\gamma$ is the limit of non-diffractive geodesics that pass above the two cone
points (or below the two cone points). If the $\epsilon_i$ have opposite signs, $\gamma$ is a limit
of non-diffractive geodesics that cross the horizontal line between the two cone
points. This case is illustrated in Figure~\ref{fig:geom-diffraction}.

\begin{figure}
  \centering







 \includegraphics{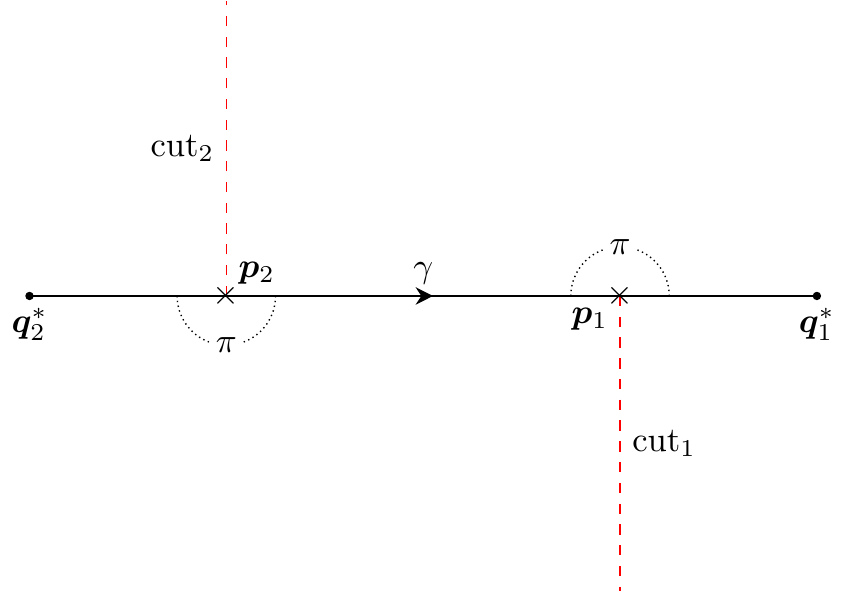}

 \caption{The geodesic $\gamma$ passing through two cone points, with a geometric diffraction in both cases}
 \label{fig:geom-diffraction}
\end{figure}

Using this local isometry we can define, for $(\bmq_1,\bmq_2)$ near
$(\bmq_1^*,\bmq_2^*),$ the functions $|\bmq_i-\bmp_i|$ and $|\bmq_1-\bmq_2|$ to
be the Euclidean distance in $\bbR^2$ of the corresponding preimages.

For $t$ close to $t^*$ we can then define the following Lagrangian submanifold
in $T^*\big (X^\circ\times X^\circ\big)$
\begin{equation*}
  \begin{aligned}
    \Lambda^0 & \,\defeq\, N^*\big\{
    |\bmq_2-\bmp_2|+b+|\bmp_1-\bmq_1|=t\,\big\},\\
    \Lambda^1& \,\defeq\, N^*\big\{ |\bmq_2-\bmp_1|\,+|\bmp_1-\bmq_1|=t\,\big\},\\
    \Lambda^2& \,\defeq\, N^*\big\{ |\bmq_2-\bmp_2|\,+|\bmp_2-\bmq_1|=t\,\big\},\\
    \Lambda^3& \,\defeq\, N^*\big\{ |\bmq_2-\bmq_1|=t\,\big\},\\
  \end{aligned}
\end{equation*}
It is straightforward to check that $\Lambda^3$ corresponds to direct
propagation, $\Lambda^1$ corresponds to one diffraction at $\bmp_1$, $\Lambda^2$
to one diffraction at $\bmp_2$ and $\Lambda^0$ to two diffractions in a row at
$\bmp_2$ and $\bmp_1.$

The aim of this section is the following
\begin{proposition}\label{prop:2dif}
  For $t > 0$ fixed near $t^*$, the Schwartz kernel of $A_1 \, \calU(t) \, A_2$
  is an intersecting Lagrangian distribution of order $0$ associated to the four
  Lagrangian submanifolds $(\Lambda_0, \Lambda_1, \Lambda_2, \Lambda_3)$.
\end{proposition}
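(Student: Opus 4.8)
The plan is to write the half-wave kernel $\calU(t)$ on each cone $C_{\alpha_i}$ using the oscillatory integral representation of Theorem~\ref{thm:intro-kernel}, then compose the two resulting integrals via the propagator on the smooth intermediate piece, arriving at a single oscillatory integral whose phase involves \emph{two} extra parameters $s_1, s_2$ — one for each cone point. First I would localize: by finite propagation speed and the ESCS structure, the kernel of $A_1\,\calU(t)\,A_2$ microlocalized near $\gamma$ can be written (modulo smoothing operators) as a composition $A_1' \,\calU_{C_{\alpha_1}}(c') \,B\, \calU_{C_{\alpha_2}}(a') \,A_2'$, where $B$ is a classical FIO associated to free propagation over the leg of length $b$ between the two cone points (this uses the fact that $\gamma$ passes geometrically through both cone points, so the relevant part of the propagator between them involves only non-diffracted or singly-diffracted rays, all of which are covered by the Melrose--Uhlmann class). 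Using Theorem~\ref{thm:intro-kernel} and the remark below Corollary~\ref{cor:UisMU} (Example~\ref{ex:U4piasMU}), replace each half-wave factor by its intersecting-Lagrangian oscillatory integral with phase $\phi_{\epsilon_i}$ and a phase-variable $s_i \geq 0$ of geometric significance (the shift of the $i$-th cone point).

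Next I would carry out the composition. Each factor contributes an integral over $(\omega_i, s_i)$; composing FIOs with these intersecting-Lagrangian distributions produces an oscillatory integral
\[
\int_{\RR^k}\int_{0}^{\infty}\int_{0}^{\infty} e^{i\phi(x,\theta,s_1,s_2)}\, a(x,\theta,s_1,s_2)\, ds_1\, ds_2\, d\theta\, |dx|^{\frac{1}{2}},
\]
where the fibre variables $\theta$ absorb the $\omega_i$ together with the intermediate spatial integration variable (the location on the leg between $\bmp_2$ and $\bmp_1$), and the phase is, essentially,
\[
\phi = \big[\,|\bmq_2 - \bmp_2(s_2)| + |\bmp_2(s_2) - \bmp_1(s_1)| + |\bmp_1(s_1) - \bmq_1|-t\,\big]\,\omega
\]
after applying stationary phase in the intermediate variable (the critical point being the straight segment through the two shifted cone points, which exists and is nondegenerate precisely because $\gamma$ is a geometric diffractive geodesic). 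One then checks that when $s_1 = s_2 = 0$ the phase parametrizes $\Lambda^0$; setting $s_1 = 0$ and eliminating $s_2$ by its (nondegenerate) stationary value recovers $\Lambda^1$ (the cone point $\bmp_2$ is passed geometrically, so the leg $|\bmq_2 - \bmp_2| + |\bmp_2 - \bmq_1|$ collapses to $|\bmq_2 - \bmq_1'|$ along the appropriate segment); symmetrically for $\Lambda^2$; and eliminating both $s_i$ gives $\Lambda^3$. By Remark~\ref{rem:4ILS}, verifying the linear-independence condition~\eqref{difflinind} for the differentials $d_{x,\theta}(\partial_\theta\phi)$, $d_{x,\theta}(\partial_{s_1}\phi)$, $d_{x,\theta}(\partial_{s_2}\phi)$ at the base point is all that is needed to see that $\phi$ is a legitimate local parametrization of the four-Lagrangian system $(\Lambda^0,\Lambda^1,\Lambda^2,\Lambda^3)$; then Proposition~\ref{prop:int-sys-phasefunction}(ii) places the kernel in $I^m(\bmLambda)$, with the order bookkeeping ($m = 0$, for $t$ regarded as a parameter) following from the symbol orders of the two half-wave factors (order $-1/4$ each as spatial operators, or the appropriate count when $t$ is a parameter) together with the FIO composition in the middle.

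The main obstacle I anticipate is the rigorous treatment of the composition near the singular sets — specifically, showing that the naive composition of the two intersecting-Lagrangian distributions is transversal (or clean) in the sense needed, and that the stationary-phase reduction in the intermediate spatial variable is uniformly valid across the entire four-fold intersection $\bigcap_i \Lambda^i$ where all the relevant second derivatives are most degenerate. Away from that intersection the composition is a routine composition of classical FIOs (or of an FIO with an intersecting pair), already covered by the invariance statements quoted from \cite{MelUhl}; the delicate point is to organize the calculation so that a single oscillatory integral of the form~\eqref{intleg4m} is produced globally near $\gamma$, rather than only piecewise. Handling the non-compactness of the $s_i$-integrations (the cone point is shifted off to infinity) requires, as in the proof on $C_{4\pi}$, observing that the integrand is compactly supported in $s_i$ on the relevant microlocal region thanks to finite propagation speed, so the $\int_0^\infty$ may be cut off to $\int_0^{S}$ without changing the distribution modulo $\calC^\infty$. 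Once these points are in place, the identification of the four Lagrangians and the order follow as sketched.
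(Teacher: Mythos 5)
Your proposal follows essentially the same route as the paper: split the propagator so that each factor sees only one cone point, insert the oscillatory-integral representation of Theorem~\ref{thm:intro-kernel} for each factor, eliminate the intermediate spatial (and one frequency) variable by stationary phase to arrive at the phase $\big[|\bmq_2-\bmp_2(s_2)|+|\bmp_2(s_2)-\bmp_1(s_1)|+|\bmp_1(s_1)-\bmq_1|-t\big]\omega$, and then verify the parametrization conditions of Remark~\ref{rem:4ILS} and apply Proposition~\ref{prop:int-sys-phasefunction}(ii) with the order bookkeeping. The only differences are cosmetic (the paper splits at an intermediate time $t_0=a+\tfrac{b}{2}$ into two factors rather than inserting a separate free FIO over the middle leg, and it settles your anticipated obstacle by an explicit Hessian computation showing the critical point in the intermediate variables is uniformly nondegenerate for small $s_1,s_2$, while compact support in the $s_i$ is already built into the amplitudes of Theorem~\ref{thm:intro-kernel}).
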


\begin{proof}
  We begin with a decomposition of $A_1 \, \calU(t) \, A_2$ in which only one
  conic point plays a role in each factor.  This is straightforward: we choose a
  time $t_0\in (a, a+b)$, say $t_0 = a + \frac{b}{2}$, and we write
  $A_1 \, \calU(t) \, A_2 = \left( A_1 \, \calU(t-t_0) \right) \left( \calU(t_0)
    \, A_2\right)$.  In terms of their Schwartz kernels, this is
  \begin{equation}
    \calK\!\left[ A_1\, \calU(t) \, A_2 \right] \! (\bmq_1, \bmq_2) = \int_X
    \calK\!\left[ A_1 \, \calU(t - t_0) \right](\bmq_1,\bmq) \cdot \calK\!\left[
      \calU(t_0) \, A_2 \right] \! ( \bmq, \bmq_2) \, d\bmq.
    \label{A1UA2}
  \end{equation}
  Due to the assumptions on the microlocalizers $A_i$, the only points $\bmq$
  that contribute to the singularities of \eqref{A1UA2} are points near
  $(b/2, 0)$ in our coordinate system. In each factor of the composition above,
  the singularities of the half-wave kernel only meet one cone point. Thus,
  modulo smooth errors, we may replace the half-wave kernel by the half-wave
  kernel on an exact cone in each factor, allowing us to use the results of
  Section~\ref{sec:micro-structure}.

  More precisely, in \eqref{A1UA2}, to obtain a singularity
  $(\bmq_1, \bm\xi_1; \bmq_2, \bm\xi_2)$ in the canonical relation of
  $A_1 \, \calU(t) \, A_2$, we must have $(\bmq, \bm\xi; \bmq_2, \bm\xi_2)$ in
  the canonical relation of $\calU(t_0) \, A_2$ and
  $(\bmq_1, \bm\xi_1; \bmq, \bm\xi)$ in the canonical relation of
  $A_1 \, \calU(t-t_0)$. For $t$ sufficiently close to $t^*$, this implies that
  $\bmq$ is close to the point $(b/2, 0)$. That is, up to a $\calC^\infty$
  error, we may insert a cutoff function $\chi^2(\bmq)$ into \eqref{A1UA2},
  where $\chi$ is supported close to $(0, b/2)$:
  \begin{equation}
    \calK\!\left[ A_1 \, \calU(t) \, A_2 \right] \! (\bmq_1, \bmq_2) \equiv
    \int_X  \calK\!\left[ A_1 \, \calU(t - t_0) \right](\bmq_1,\bmq) \cdot
    \chi^2(\bmq) \cdot \calK\!\left[ \calU(t_0) \, A_1 \right] \! ( \bmq, \bmq_2)
    \, d\bmq
    \label{A1UA2-2}
  \end{equation}
  modulo $\calC^\infty$ errors.  Moreover, restricting the microlocal supports
  of $A_1$ and $A_2$ if needed, we may assume that the support of $\chi$ is
  contained in a ball that is isometric to the corresponding ball in $\bbR^2.$
  By the above assumptions on the geodesic $\gamma$, the half-wave operators
  $\calU(t_0)$ and $\calU(t- t_0)$ in the compositions
  $\chi(\bmq) \, \calU( t_0) \, A_2$ and $A_1 \, \calU(t-t_0) \, \chi(\bmq)$ can
  be replaced (up to a smooth error) by the corresponding wave kernels on the
  exact cones with cone points $\bmp_1$, resp.~ $\bmp_2$, which we know from
  Section~\ref{sec:micro-structure} are intersecting Lagrangian distributions
  associated to the diffractive and main fronts. That is, we can express the
  Schwartz kernel of $\chi(\bmq) \, \calU( t_0) \, A_2$ in the oscillatory
  integral form
  \begin{equation}
   (2\pi)^{-2}  \int_{0}^\infty \int_{0}^\infty   e^{i\phi_2(\bmq, \bmq_2, t_0, s_2, \omega_2)}
    a_2(\bmq,\bmq_2, t_0, s_2, \omega_2) \,  d\omega_2 \, ds_2
    \label{wave-kernel-Phi2}
  \end{equation}
  where $\phi_2$ is the phase function
  \begin{equation}
    \label{Phi2}
    \phi_2 \defeq \big[ |\bmq-\bmp_2(s_2)| + |\bmp_2(s_2)-\bmq_2| -
    t_0 \big ] \cdot \omega_2 ,
  \end{equation}
  where $\bmp_2(s_2)$ has coordinates $(0,-\epsilon_2 s_2)$ and $|\cdot|$
  denotes the Euclidean distance in $\bbR^2.$

  Similarly, the Schwartz kernel of $A_1 \, \calU(t-t_0) \, \chi(\bmq)$ has the
  oscillatory integral representation
  \begin{equation}
    (2\pi)^{-2} \int_{0}^\infty  \int_{0}^\infty e^{i\phi_1(\bmq_1, \bmq, t-t_0, s_1, \omega_1)}
    a_1(\bmq_1,\bmq, t-t_0, s_1, \omega_1)  \, d\omega_1 \, ds_1
    \label{wave-kernel-Phi1}
  \end{equation}
  where $\phi_1$ is the phase function
  \begin{equation}
    \label{Phi1}
    \phi_1 \defeq \big[ |\bmq_1-\bmp_1(s_1)|+|\bmp_1(s_1)-\bmq| -
    (t-t_0) \big] \cdot \omega_1,
  \end{equation}
  where now $\bmp_1(s_1)$ has coordinates $(b,-\epsilon_1 s_1).$ Here, $a_i$ is
  smooth, supported in $\omega_i \geqslant 1$, and is a symbol of order $1$ in
  $\omega_i$.

  Therefore, \eqref{A1UA2-2} is given by an oscillatory integral (up to smooth
  errors) of the form
  \begin{multline}
  (2\pi)^{-4}   \int_{X_{\bmq}} \int_{\bbR^2_{\bm\omega}} \int_{s_1 = 0}^\infty \int_{s_2 =
      0}^\infty e^{i \phi_1 + i \phi_2} \,  a_1(\bmq_1,\bmq, t-t_0, s_1, \omega_1) \\
    \times a_2(\bmq,\bmq_2, t_0, s_2, \omega_2) \, ds_1 \, ds_2 \, d\omega_1 \,
    d\omega_2 \, d\bmq
    \label{wave-kernel-comp}
  \end{multline}
  We now show that in the overall phase function $\Phi \defeq \phi_1 + \phi_2$
  we can eliminate the variables $(\bmq, \omega_2)$. This is possible if the
  following non-degeneracy condition is satisfied:
  \begin{equation}
    d_{(\bmq, \omega_2)} \Phi = 0 \implies \det  d^2_{(\bmq, \omega_2), (\bmq,
      \omega_2)} \Phi \neq 0 .
    \label{ztheta-cond}
  \end{equation}
  The condition $ d_{\bmq} \Phi = 0$ implies that $\bmq$ is on
  the segment $[p_2(s_2),p_1(s_1)]$ and that $\omega_1 = \omega_2$. The condition $d_{\omega_2} \Phi = 0$ implies that $\bmq$ is at distance $t_0-|\bmp_2(s_2)-\bmq_2|$
  from $\bmp_2(s_2).$ Since the non-degeneracy condition is coordinate free and
  has to be verified with fixed $s_1,s_2,\omega_1,\bmq_1,\bmq_2$ we can choose
  for $\bmq$ cartesian coordinates $(x,y)$ in a rotated and translated coordinate frame,
   such that the origin corresponds to
  the critical point and the conical points $\bmp_i(s_i)$ have the following
  coordinates:  $\bmp_1(s_1)=(B,0),~\bmp_2(s_2)=(-A,0)$ with positive $A,~B.$ We
  observe that $A$ and $B$ depend on all remaining variables.

  In these coordinates we have (we only keep $(x,y,\omega_2)$ as variables since
  the other ones are fixed)
  \begin{multline*}
      \Phi(x,y,\omega_2) = \omega_1\cdot \left[
      |\bmq_1-\bmp_1(s_1)|+\sqrt{(B-x)^2+y^2}-(t-t_0)\right] \\
        \mbox{} +\omega_2\cdot \left[
      \sqrt{(A+x)^2+y^2}+|\bmq_2-\bmp_2(s_2)|-t_0\right]
  \end{multline*}

  We compute that
  \begin{equation}
    \begin{aligned}
      d_{x} \Phi &=  \frac{(x-B)\omega_1}{\sqrt{(B-x)^2+y^2}} + \frac{(A+x) \omega_2}{\sqrt{(A+x)^2+y^2}}  , \\
      d_{y} \Phi &= \frac{y \omega_1}{\sqrt{(B-x)^2+y^2}} +
      \frac{y \omega_2}{\sqrt{(A+x)^2+y^2}} , \\
      d_{\omega_2} \Phi &= \sqrt{(A+x)^2+y^2}+|\bmq_2-\bmp_2(s_2)|-t_0
    \end{aligned}
  \end{equation}
  The critical point is easily seen to be $(x=0,y=0,\omega_2=\omega_1).$ We can
  then compute the Hessian of $\Phi$ in the $(x,y, \omega_2)$-variables and
  evaluate it at the critical point:
  \begin{equation}
    \begin{bmatrix}
      \partial_{xx}\Phi & 0  & 1\\
      0 & \omega_1C & 0 \\
      1 & 0 & 0
    \end{bmatrix}, \quad C \defeq \frac1{A} + \frac1{B}.
    \label{Hessian}\end{equation}

  The determinant is $-C \omega_1 < 0$ so that the non-degeneracy condition is
  satisfied. It is straightforward to check that this matrix has two positive eigenvalues and
  one negative eigenvalue. The signature thus is $1.$

  Hence, using the argument of H\"ormander \cite{HorFIO}, we can write the
  oscillatory integral where we replace $(x,y, \omega_2)$ by their values at the
  stationary point that we denote by $\bmq_c$. We obtain the oscillatory
  integral (writing $\omega$ for $\omega_1$)
  \begin{equation}
   (2\pi)^{-5/2}  \int_{-\infty}^\infty \int_{s_1 = 0}^\infty \int_{s_2
      = 0}^\infty e^{i\Psi(t,\bmq_1,\bmq_2, s_1, s_2,
      \omega)} \tilde a(t,\bmq_1, \bmq_2, s_1, s_2, \omega)   \, ds_1 \,
    ds_2 \,  d\omega,
    \label{wk-result}
  \end{equation}
  where the phase function $\Psi(t, \bmq_1, \bmq_2, s_1, s_2, \omega_1)$ is seen to be
  \begin{equation}\label{Psi}
    \begin{aligned}
      \Psi & \defeq \big[
      |\bmq_2-\bmp_2(s_2)|+|\bmp_2(s_2)-\bmq_c|+|\bmq_c-\bmp_1(s_1)|+|\bmp_1(s_1)-\bmq_1|-t
      \big]
      \omega_1  \\
      & = \big[
      |\bmq_2-\bmp_2(s_2)|+|\bmp_2(s_2)-\bmp_1(s_1)|+|\bmp_1(s_1)-\bmq_1|-t
      \big] \omega_1,
    \end{aligned}
  \end{equation}
  and the amplitude is given by
  \begin{equation}
  \tilde a(t,\bmq_1, \bmq_2, s_1, s_2, \omega) = e^{i\pi/4} (\omega C)^{-1/2} a_1(\bmq_1, \bmq_c, t-t_0, s_1, \omega) a_2(\bmq_c, \bmq_2, t_0, s_2, \omega).
 \label{tildea} \end{equation}

  We can now verify easily, using Definition~\ref{def:Leg-sys-param} and
  Remark~\ref{rem:4ILS}, that $\Psi$ pa\-ra\-met\-rizes the given system of four
  Lagrangian submanifolds. Indeed, a simple computation shows that at
  $(t^*,\bmq_1^*,\bmq_2^*,\omega^*) = (c,(c-a, 0), (-a, 0), 1)$ we have
  $d_{\omega, s_1, s_2} \Psi(t,\bmq_1^*, \bmq_2^*, \omega^*, 0, 0)=0$.
  Moreover, explicit computation shows that at this point the differential
  $d( \partial\Psi/\partial s_1)$ is a nonzero multiple of $dy_1$, the
  differential $d( \partial\Psi/\partial s_2)$ is a nonzero multiple of $dy_2$,
  and $d( \partial\Psi/\partial \omega_1)$ has a nonzero $dt$ component.  Thus
  these differentials are linearly independent, implying that the localized
  propagator is an intersecting Lagrangian distribution associated to the above
  system. It is not hard to check that the four Lagrangians correspond to no
  diffractions ($\Lambda_3$), one diffraction ($\Lambda_1$, $\Lambda_2$),
  arising from interaction with $\bmp_1$ or $\bmp_2$ respectively, and two
  diffractions ($\Lambda_0$).  Finally, as $\tilde a$ in \eqref{tildea} is a symbol in $\omega$ of
  order $3/2$, we see directly from
  Proposition~\ref{prop:int-sys-phasefunction} that the order of the
  distribution (that is, the order on $\Lambda_3$, the direct front) is $0$ (where $t$ is treated as a parameter).
\end{proof}

To conclude this section, we compute the principal symbol of the wave kernel
$\calU(t) \defeq e^{-it \sqrt{\Delta}}$ at the twice-diffracted Lagrangian
$\Lambda_0$ using Proposition~\ref{prop:symbol-4}. This amounts to computing $\tilde a$, to leading order in $\omega$, at $s_1 = s_2 = 0$. We will do the computation in the case $\epsilon_1=-1$ and $\epsilon_2=+1,$ as in Figure~\ref{fig:geom-diffraction}. The other cases are similar.

Clearly, from \eqref{tildea}, we need the leading order behaviour of $a_i$ at $s_i = 0$.
This is given by \eqref{eq:as=0}. Substituting into \eqref{tildea}, we find that  when $s_1 = s_2 = 0$,
\begin{multline}
 \tilde a(t,\bmq_1, \bmq_2, 0, 0, \omega) =  e^{i\pi/4} C^{-1/2}
 (2\pi)^2 S_{\alpha_1} (-\pi - \theta_1)  \sin \theta_1 \big( |\bmq_1 - \bmp_1| \, | \bmq_c - \bmp_1|\big)^{-1/2} \\ \times S_{\alpha_2}(\theta_2) \sin \theta_2 \big( |\bmq_2 - \bmp_2| \, | \bmq_c - \bmp_2| \big)^{-1/2}
 \omega^{3/2} \text{ mod } S^{1/2}.
\label{atilde3} \end{multline}
 When $s_1 = s_2 = 0$ we have
 $$
 C = \frac{A + B}{AB} = \frac{ |\bmq_c - \bmp_1| +  |\bmq_c - \bmp_2|}{ |\bmq_c - \bmp_1| \, |\bmq_c - \bmp_2|} = \frac{ b}{ |\bmq_c - \bmp_1| \, |\bmq_c - \bmp_2|} .
 $$
 Using coordinates where $(r_i,\theta_i)$ are polar coordinates for $\bmq_i$ centered at $\bmp_i$, we can simplify \eqref{atilde3} to
\begin{multline}
 \tilde a(t,\bmq_1, \bmq_2, 0, 0, \omega) =  e^{i\pi/4}
 (2\pi)^2 S_{\alpha_1} (-\pi - \theta_1)   S_{\alpha_2}(\theta_2) \sin \theta_1 \sin \theta_2  \\
 \times \Big(\frac{r_1r_2}{b} \Big)^{-1/2}
 \omega^{3/2} \text{ mod } S^{1/2}.
 \label{atilde4}\end{multline}

 Using Proposition $\ref{prop:symbol-4}$, and the identities
  \begin{equation*}
    \Psi_{s_1} = \frac{\omega \, y_2}{|\bmy|} = \omega \sin(\theta_1) \quad
    \text{and} \quad \Psi_{s_2} = \frac{\omega x_2}{|\bmx|} = \omega
    \sin(\theta_2)
  \end{equation*}
valid  when $s_1 = s_2 = 0$, we find that the principal symbol at the twice-diffracted Lagrangian $\Lambda_0$ is
  \begin{multline}
    \left. \frac{1}{2\pi} \left[ \frac{\tilde a(\bmx, \bmy, t_0, t, 0, 0,
          \omega)}{\Psi_{s_1} \Psi_{s_2}} \right] \right|_{C_0} \left|
      \frac{\partial (r_1, \theta_1, \theta_2, \omega, r_1 + b + r_2 -
        t)}{\partial (x, y, \omega)} \right|^{-\frac{1}{2}} |dr_1
    d\theta_1 d\theta_2 d\omega|^{\frac{1}{2}}  \\
    =  \frac{2\pi \, e^{i\pi/4}}{\omega^{\frac{1}{2}} b^{\frac{1}{2}}}
    S_{\alpha_2}(\theta_2) \, S_{\alpha_1}(-\pi -  \theta_1) \,  |dr_1
    d\theta_1 d\theta_2 d\omega|^{\frac{1}{2}} . \qedhere
\label{2Diff-prsymb}  \end{multline}


\section{Contributions to the wave trace of an isolated orbit with two geometric
  diffractions}
\label{sec:wave-invariants}

As a byproduct of our approach we can compute in a rather straightforward way
the leading contribution to the wave trace of any kind of periodic orbit, thus
generalizing \cite{Hil}. We present here the case of an isolated periodic
geodesic that has two geometric diffractions (and no other diffractions).

More precisely, we assume that the orbit $\gamma$ diffracts at $\bmp_1$ and
$\bmp_2$ (not necessarily distinct) and that the angles of diffraction are
$-\pi$ and $+\pi.$ We construct as before the rectangle with cuts that is
associated with this periodic geodesic. We see that near $\bmp_1$ the geodesic
is locally the limit of non-diffractives geodesics that pass above $\bmp_1.$
Near $\bmp_2$ it is locally the limit of non-diffractive geodesics that pass
below.  It follows that one cannot translate the orbit to a nearby periodic
orbit, so that the orbit is isolated as a periodic orbit.

\begin{remark}
  If instead of translating the orbit we rotate it then we do obtain
  non-diffractive geodesics that converge to $\gamma$ on any interval $[0,T]$
  but these won't be periodic.
\end{remark}

Let $\bmq$ be a point on $\gamma,$ we intend to compute
\[
\sigma_\rho(t) \defeq \Tr(A_1\calU(t)A_2\rho),
\]
for $t$ close to the period $L$, $A_i$ is a microlocal projector near
$(\bmq,\xi^*)$ and $\rho$ a bump function near $\bmq$ such that on the support
of $\rho$ the principal symbols of $A_1$ and $A_2$ are identically $1$ on the
lift of the geodesic. More precisely, we first choose $A_1$ and $A_2$ such that
for $t$ close to $L,$ any geodesic of length $t$ whose starting point is in the
microsupport of $A_2$ and whose endpoint is in the microsupport of $A_1$ stays
close to $\gamma.$ The bump function is chosen afterwards.

We construct the Euclidean system of coordinates as before: the periodic orbit
lies along the $x$-axis, with cone points $\bmp_2$ located at $(0,0)$ and
$\bmp_1$ at $(b, 0)$, and we identify $(x,y)$ with $(x + L, y)$, where $L$ is
the period.

According to Section~\ref{sec:wave-kernel-two-diffractions}, the Schwartz kernel
of the half-wave operator $A_1\calU(t)A_2$ after two diffractions has the
following oscillatory integral representation
\begin{equation}
 (2\pi)^{-5/2}  \int_0^\infty \int_{0}^\infty
  \int_{0}^\infty e^{i\psi(\bmx, \bmy, t, s_1, s_2,
    \omega)} \, \tilde a(t,\bmq_1,\bmq_2, s_1, s_2, \omega) \, ds_1 \, ds_2 \,
  d\omega,
  \label{Psioscint}
\end{equation}
where $\psi$ is given by
\[
\psi(t,\bmq_1,\bmq_2,s_1,s_2,\omega)= \big[ |\bmq_2-\bmp_2(s_2)|+
|\bmp_2(s_2)-\bmp_1(s_1)|+ |\bmp_1(s_1)-\bmq_1|-t\big]\cdot \omega
\]
and $\tilde a$ is given by \eqref{atilde4}:
\[
\tilde{a}(t,\bmq_1,\bmq_2,0,0,\omega)\sim (2\pi)^2 \cdot  e^{i\frac{\pi}{4}} \cdot
\frac{\sin \theta_1 \cdot S_{\alpha_1}(-\pi-\theta_1)\cdot \sin \theta_2\cdot
  S_{\alpha_2}(\theta_2)}{(r_1 b r_2)^{\frac{1}{2}}}\cdot \omega^{\frac{3}{2}}.
\]

We are thus lead to compute
\begin{equation}
  \sigma_{\rho}(t)\defeq
 (2\pi)^{-5/2} \int_{X}\int_0^\infty \int_0^\infty \int_0^\infty
  e^{i\psi(t,\bmq + (L,0),\bmq,s_1,s_2,\omega)}\tilde{a} \, \rho(\bmq) \,
  ds_1ds_2d\bmq d\omega,
\end{equation}
where we have set $\bmq_2 = \bmq$ and $\bmq_1 = \bmq + (L,0)$.

We choose to parametrize $\bmq$ by $(x,y)$: the Euclidean coordinates near
$\bmq_2^*.$ In this oscillatory integral, we first perform a stationary phase
with respect to $y.$ We denote by $y_c$ the stationary (critical) point. We
observe geometrically that $(x,y_c)$ is on the segment
$[\bmp_1(s_1),\bmp_2(s_2)].$ Moreover, we compute
\[
|\partial^2_y \psi(t, (x,y_c),0,0,\omega)|=\frac{|L-b|}{|x||L-b-x|}\cdot
\omega.
\]
It follows that the critical point remains non-degenerate for small $(s_1,s_2).$
Since geometrically, it is obvious that the critical point is a minimum, it also
follows that the signature is $+1.$

We now observe that the phase, when evaluated at the critical point becomes
independent of the remaining $x.$ More precisely it is given by $\tilde{\psi}$
where we have set
\begin{equation}
\tilde{\psi}(t,s_1,s_2,\omega)=\Big[
\sqrt{b^2+(s_1+s_2)^2}+\sqrt{(L-b)^2+(s_1+s_2)^2}-t\Big]\cdot \omega.
\label{psitilde}\end{equation}
We thus obtain after applying the stationary phase:
\[
\sigma_{\rho}(t) = \int_0^L\int_0^\infty \int_0^\infty \int_0^\infty e^{i\tilde{\psi}}
A(t,x,s_1,s_2,\omega)\rho((x,y_c))\,ds_1 \, ds_2 \, d\omega \, dx,
\]
where $A$ is a symbol that, at leading order and for $s_1=s_2=0$, reads
\begin{equation*}
  \begin{aligned}
    A(t,x,0,0,\omega) &\sim (2\pi)^{1/2 - 5/2} e^{i\frac{\pi}{4}}\tilde{a}(t,(x+L,y_c),(x,y_c),0,0,
    \omega)\\
    &\hspace*{1in} \mbox{} \times
    |\partial^2_y\psi(t,(x,y_c),0,0,\omega)|^{-\frac{1}{2}} \rho(x,y_c) \\
    &\sim i \cdot \frac{\sin(\theta_1) \cdot S_{\alpha_1}(-\pi-\theta_1)
    \cdot \sin(\theta_2) \cdot S_{\alpha_2}(\theta_2)}{ \sqrt{b \cdot |x||L-b-x|}
    } \cdot \frac{\sqrt{|x||L-b-x|}}{\sqrt{L-b}} \cdot \omega \\
    & \sim i\cdot \frac{\sin \theta_1\cdot S_{\alpha_1}(-\pi-\theta_1)\cdot
      \sin \theta_2 \cdot S_{\alpha_2}(\theta_2)}{\sqrt{b(L-b)}}\cdot \omega.
  \end{aligned}
\end{equation*}
It remains to evaluate an oscillatory integral of the form
\[
I(t)\defeq \int_0^L\int_0^\infty \int_0^\infty \int_0^\infty e^{i\tilde{\psi}(t, s_1,s_2,\omega)}
\tilde{A}(t,x,s_1,s_2,\omega)\cdot \omega \, ds_1\, ds_2 \, d\omega \, dx
\]
in which $\tilde{A}$ is a symbol in $\omega$.

If we forget the restriction on the domain for $(s_1,s_2)$, this is a standard
oscillatory integral and the phase has a smooth submanifold of fixed point. The
restriction on the domain makes it a little less standard. Although we could
perform a general treatment for this kind of oscillatory integrals, in our case,
the nature of the phase allows for a more direct computation.

We first make the change of variables $u=s_1+s_2,~v=s_1-s_2.$ In these coordinates, $\tilde \psi$ is independent of $v$; we write $\tilde \psi(t, u, \omega)$ for the phase expressed in these coordinates. Notice that, by \eqref{psitilde}, it is a smooth function of $u^2$, and is stationary in $u$ only at $u=0$.
The domain of
integration becomes $u\geq 0$ and $-u\leqslant v\leqslant u.$ We obtain the
integral
\[
I(t) = \int_0^\infty \int_0^\infty e^{i\tilde{\psi}(t, u,\omega)}
\left[\frac{1}{2}\int_{-u}^u \tilde{A}(u,v,\omega) dv\right] du \, d\omega.
\]
Since the factor in square brackets vanishes at $u=0$, the leading contribution of this integral is obtained by performing an integration by parts in $u$. To do this we write
$$
\tilde \psi(t, u, \omega) = (t-L)\omega + \tilde {\tilde \psi}(t, u, \omega), \quad
\tilde {\tilde \psi}(t, u, \omega) = O(u^2), \ u \to 0.
$$
Then we have $\lim_{u\rightarrow 0} \frac{\partial_u
  \tilde {\tilde{\psi}}(u,\omega)}{u}=\partial^2\tilde{\psi}(0,\omega) \neq 0.$
We obtain
\[
I(t) \sim i\int_0^\infty e^{i(t-L)\omega}
\tilde{A}_0(0,0,\omega)(\partial_u^2 \tilde{\psi}(0,\omega))^{-1} d\omega
\]
where the index $0$ means that we have taken the principal part of $\tilde{A}.$

It remains to evaluate all the quantities in our case observing that when
$s_1,s_2$ go to $0,$ $\theta_1$ go to $0$ and $\theta_2$ go to $\pi.$ Using \eqref{Salpha-expr} and \eqref{psitilde}, we have
\begin{equation*}
  \begin{aligned}
    \lim_{\theta_1 \rightarrow 0} \sin \theta_1
    S_{\alpha_1}(-\pi-\theta_1)& =\, \frac{1}{2\pi}\\
    \lim_{\theta_2 \rightarrow \pi} \sin \theta_2
    S_{\alpha_2}(\theta_2)& =\, -\frac{1}{2\pi}\\
    \partial^2_u \tilde{\psi}(0,\omega) & = \omega\cdot \frac{L}{b(L-b)}.
  \end{aligned}
\end{equation*}
Putting everything together we obtain:
\begin{equation*}
  \begin{aligned}
    \sigma_{\rho}(t) & \,\sim \int_0^\infty e^{i\omega (L-t)}
    \frac{\sqrt{b(L-b)}}{4\pi^2 L} d\omega \cdot \int_0^L
    \rho(x,0) dx\\
    & \,\sim \frac1{i} \frac{\sqrt{b(L-b)}}{4\pi^2 L} \cdot (t-L-i0)^{-1} \cdot \int_0^L
    \rho(x,0) dx
  \end{aligned}
\end{equation*}

The contribution of the whole periodic orbit is obtained by using a covering
argument (i.e. choosing carefully near each point $A_1,A_2$ and $\rho$ so that
in the end $\sum \rho$ is identically $1$ in a neighbourhood of the
geodesic). In the process, we have to be careful near the cone point. The
contribution of a (small) neighbourhood of the cone point can be computed using
the following trick (that is already used in \cite{Hil} and \cite{Wun}).

Suppose $\rho_c$ is a function that is identically $1$ near $\bmp_1.$ We want to
compute
\[
\sigma_{\rho_C}(t)\,\defeq\,\Tr(\calU(t)\rho_c)
\]
We insert microlocal cutoffs so that, up to a smooth remainder we have
\[
\sigma_{\rho_C}(t)=\Tr(A_1\calU(t-t_0)A_2U(t_0)\rho_c).
\]
Using the cyclicity of the trace we need to calculate
\[
\sigma_{\rho_C}(t)=\Tr(U(t_0)\rho_c A_1\calU(t-t_0)A_2).
\]
In the latter expression, thanks to the cutoffs, all the operations (composition
and taking the trace) take place away of the conical point. So we can proceed as
before.

In the end, if we sum all the contributions, it will amount to sum all the
contributions $\int \rho(x,0) dx$ and this will give the length of the geodesic.

We obtain the following proposition.

\begin{proposition}
  On a ESCS, the leading contribution to the wave trace of an isolated periodic
  diffractive orbit with two geometric diffraction is
  \[
 \frac{1}{4i\pi^2} \cdot \sqrt{b(L-b)} \cdot (t-L-i0)^{-1}.
  \]
\end{proposition}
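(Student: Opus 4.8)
The plan is to start from the oscillatory integral representation of the Schwartz kernel of $A_1\calU(t)A_2$ established in Proposition~\ref{prop:2dif} (equation \eqref{Psioscint}), and take its trace by restricting to the ``diagonal up to the period'', i.e.\ setting $\bmq_2 = \bmq$ and $\bmq_1 = \bmq + (L,0)$ in the Euclidean coordinate system adapted to the orbit (orbit along the $x$-axis, cone points at $(0,0)$ and $(b,0)$, identification $(x,y) \sim (x+L,y)$), and integrating against the bump function $\rho(\bmq)$. This produces a multiple oscillatory integral $\sigma_\rho(t)$ in the variables $(x,y,s_1,s_2,\omega)$, with phase $\psi$ and amplitude $\tilde a$ as in \eqref{atilde4}, where $(x,y)$ are the Euclidean coordinates of $\bmq$.

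The first step is a stationary phase reduction in the transverse variable $y$. Geometrically the critical point $y_c$ is characterized by $(x,y_c)$ lying on the segment joining the shifted cone points $\bmp_1(s_1)$ and $\bmp_2(s_2)$, and a direct computation gives $|\partial_y^2\psi| = |L-b|\,\omega / (|x|\,|L-b-x|)$, which is nonvanishing; since the critical point is a minimum, the signature is $+1$, and the critical point stays nondegenerate for small $s_1,s_2$. The essential simplification is that, evaluated at $y_c$, the phase becomes independent of $x$ and equals $\tilde\psi(t,s_1,s_2,\omega)$ of \eqref{psitilde}. Collecting the Jacobian factor and inserting the leading behaviour \eqref{eq:as=0} of the $a_i$ at $s_i = 0$, the factors of $\sqrt{|x|\,|L-b-x|}$ cancel, and the amplitude at $s_1 = s_2 = 0$ reduces to $i\,\sin\theta_1\, S_{\alpha_1}(-\pi-\theta_1)\,\sin\theta_2\, S_{\alpha_2}(\theta_2)\,\omega / \sqrt{b(L-b)}$ plus lower order in $\omega$.

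The remaining step is to evaluate $I(t) = \int_0^L \int_0^\infty \int_0^\infty \int_0^\infty e^{i\tilde\psi}\,\tilde A\,\omega \, ds_1\,ds_2\,d\omega\,dx$, where $\tilde A$ is a symbol in $\omega$. Here is where the integral is nonstandard: $\tilde\psi$ depends on $(s_1,s_2)$ only through $u = s_1+s_2$ and is stationary only at $u = 0$, which sits on the boundary of the integration region $\{s_1,s_2 \geq 0\}$. Changing to $u = s_1+s_2$, $v = s_1-s_2$, the domain becomes $\{u \geq 0,\ -u \leq v \leq u\}$ and integrating out $v$ yields a factor $\tfrac12\int_{-u}^u \tilde A\,dv$ that vanishes at $u = 0$. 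Because of this vanishing, the leading contribution comes from a single integration by parts in $u$: writing $\tilde\psi = (t-L)\omega + \tilde{\tilde\psi}$ with $\tilde{\tilde\psi} = O(u^2)$ and $\lim_{u\to 0}\partial_u\tilde{\tilde\psi}/u = \partial_u^2\tilde\psi(0,\omega)\neq 0$, one gets $I(t) \sim i\int_0^\infty e^{i(t-L)\omega}\,\tilde A_0(0,0,\omega)\,(\partial_u^2\tilde\psi(0,\omega))^{-1}\,d\omega$. Substituting $\partial_u^2\tilde\psi(0,\omega) = \omega L/(b(L-b))$, the limits $\lim_{\theta_1\to 0}\sin\theta_1\,S_{\alpha_1}(-\pi-\theta_1) = 1/2\pi$ and $\lim_{\theta_2\to\pi}\sin\theta_2\,S_{\alpha_2}(\theta_2) = -1/2\pi$ from \eqref{Salpha-expr}, and $\int_0^\infty e^{i\omega(L-t)}d\omega = \tfrac1i(t-L-i0)^{-1}$ gives $\sigma_\rho(t) \sim \tfrac{1}{4i\pi^2 L}\sqrt{b(L-b)}\,(t-L-i0)^{-1}\int_0^L\rho(x,0)\,dx$.

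Finally I would assemble the contribution of the whole orbit by a partition-of-unity argument: choose the microlocalizers and bumps around each point of $\gamma$ so that $\sum\rho \equiv 1$ near the orbit, so that the sum of the factors $\int\rho(x,0)\,dx$ adds up to the length $L$, cancelling the $L$ in the denominator. The one subtlety is a neighbourhood of each cone point, where the kernel is not a standard FIO; there one uses cyclicity of the trace, writing $\sigma_{\rho_c}(t) = \Tr(\calU(t_0)\rho_c A_1\calU(t-t_0)A_2)$ after inserting microlocal cutoffs, so that all operations take place away from the cone point and the same local analysis applies. I expect the main obstacle to be the careful treatment of the boundary stationary phase in the third step---justifying that the leading singularity of the trace is indeed captured by the single integration by parts in $u$, and that the $v$-integration contributes no competing singular term.
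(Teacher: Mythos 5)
Your proposal is correct and follows essentially the same route as the paper's own proof: trace of the microlocalized oscillatory integral representation, stationary phase in the transverse variable $y$ with the phase collapsing to $\tilde\psi$ independent of $x$, the change of variables $u=s_1+s_2$, $v=s_1-s_2$ with a single integration by parts in $u$ at the boundary, the limits of $\sin\theta_i\,S_{\alpha_i}$, and the final covering/cyclicity argument near the cone points. No substantive differences from the argument in Section~\ref{sec:wave-invariants}.
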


\begin{remark}
As a point of comparison, we recall the analogous leading-order contribution of
a nondegenerate closed orbit $\gamma$ on a compact, smooth manifold in the trace
theorem of Duistermaat and Guillemin \cite{DuiGui}:
\begin{equation*}
  (2\pi)^{-1} L \, i^{\sigma_{\gamma}} \, \big| \Id - P_\gamma
  \big|^{-\frac{1}{2}}(t-L-i0)^{-1} .
\end{equation*}
Here, $P_\gamma$ is the Poincar\'e return map in the directions transverse to
the level set of the symbol and to the flow direction, and $i^{\sigma_{\gamma}}$
is a Maslov factor (with $\sigma_\gamma$ the Morse index of the geodesic). The
singularity we obtain here from an isolated periodic orbit with two geometric diffractions is thus of the same order.

We can also compare this with the singularity contributed by a non-geometric diffractive periodic orbit with one diffraction, as computed in \cite[Theorem 2]{Hil}. This has leading singularity $(t-L-i0)^{-1/2}$ and is hence one half order more regular. On the other hand, the singularity contributed by a cylinder of periodic geodesics is to leading order $(t-L-i0)^{-3/2}$, from op. cit. which is half an order more singular.

Notice that a cylinder of periodic geodesics necessarily has geometrically diff\-racted geodesics at its boundary.
In the second article in this series, we intend to use the analysis of the present paper to compute higher order terms in the wave trace singularity arising from such a cylinder.
\end{remark}

\appendix


\section{Domains of operators and admissible asymptotics at the cone point}
\label{sec:proof-asymptotics-cone-point}

In the course of our construction of the wave propagators on $C_{4\pi}$, we
needed information about the domains of operators related to the
Laplace-Beltrami operator $\Delta_g$.  The first such result was a description
of the domain of the adjoint operator $\overline{\Delta_g}{}^*$.

\begin{lemma}
  \label{thm:domain-adjoint-Lap}
  Let $\rho \in \calC^\infty((0,\infty)_r)$ be a smooth cutoff satisfying
  $\rho \equiv 1$ for $r \leqslant 1$ and $\rho \equiv 0$ for $r \geqslant 2$.
  Then the domain of $\left(\overline{\Delta_g}\right)^*$ as an unbounded
  operator on $L^2(C_{4\pi})$ is
  \begin{equation}
    \label{eq:domain-adjoint-Lap}
    \overline{\mathfrak{D}}{}^* = \overline{\mathfrak{D}} \oplus \Span_\bbC \!
    \left\{ \rho, \rho \log(r), \rho \, r^\frac{1}{2} \exp\!\left[
        \pm \frac{i\theta}{2} \right], \rho \, r^{-\frac{1}{2}} \exp\!\left[ \pm
        \frac{i\theta}{2} \right]
    \right\} .
  \end{equation}
\end{lemma}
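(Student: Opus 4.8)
The plan is to separate variables on $C_{4\pi} = (\RR_+)_r \times (\RR/4\pi\ZZ)_\theta$. Expand any $u \in \Dom(\overline{\Delta_g}{}^*) \subseteq L^2$ in the angular Fourier modes $e^{ij\theta/2}$, $j \in \ZZ$, so that $u = \sum_j u_j(r)\,e^{ij\theta/2}$, and recall that on the $j$-th mode the operator $\Delta_g$ acts as the radial Bessel-type operator $-\partial_r^2 - r^{-1}\partial_r + (j/2)^2 r^{-2}$. The condition $u \in \Dom(\overline{\Delta_g}{}^*)$ is equivalent to $u \in L^2$ and $\Delta_g u \in L^2$ (in the distributional sense on $C^\circ_{4\pi}$), which decouples mode-by-mode: $u_j \in L^2(r\,dr)$ and $\bigl(-\partial_r^2 - r^{-1}\partial_r + (j/2)^2 r^{-2}\bigr)u_j \in L^2(r\,dr)$ on each of $(0,1)$ and $(1,\infty)$ (with no issue at $r=1$ or $r=\infty$, since the operator is regular there and the behaviour at infinity is already controlled by the $L^2$ hypothesis). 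First I would dispose of the large-$r$ region: there the equation $\Delta_g u = f \in L^2_{\mathrm{loc}}$ with $u \in L^2$ forces $u \in H^2_{\mathrm{loc}}$ away from $\bmp$ by interior elliptic regularity, so the only subtlety is at $r = 0$, which is exactly why the cutoff $\rho$ (supported near the origin) appears in the statement.

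Next I would analyze the indicial behaviour at $r = 0$ for each mode. The homogeneous equation $\bigl(-\partial_r^2 - r^{-1}\partial_r + (j/2)^2 r^{-2}\bigr) w = 0$ has solutions $r^{\pm |j|/2}$ for $j \neq 0$ and $1, \log r$ for $j = 0$. Membership of $u_j$ in the \emph{minimal} domain $\overline{\mathfrak{D}}$ corresponds to the condition $u_j(r) \to 0$ as $r \to 0$ (this is the content of \eqref{eq:closure-domain}, granted in the excerpt). The only modes for which a solution of the homogeneous equation is in $L^2(r\,dr)$ near $0$ but does \emph{not} vanish at $r = 0$ are: $j = 0$, giving the constant $1$ and $\log r$ (both in $L^2$ near $0$, since $\int_0 |\log r|^2 r\,dr < \infty$); and $j = \pm 1$, giving $r^{1/2}$ and $r^{-1/2}$ (here $\int_0 r^{-1} r\, dr < \infty$, so $r^{-1/2}$ is in $L^2$; for $|j| \geq 2$ the solution $r^{-|j|/2}$ fails to be in $L^2$ near $0$, and $r^{|j|/2} \to 0$). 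For these exceptional modes, the variation-of-parameters formula for the inhomogeneous equation shows that any solution decomposes as a linear combination of the two indicial solutions plus a term that vanishes at $0$ and lies in the minimal domain. Concretely: writing $u_j = c_j^+ r^{|j|/2} + c_j^- r^{-|j|/2} + (\text{remainder})$ near $0$ (and $u_0 = a_0 + a_{-} \log r + (\text{remainder})$), one checks the remainder is $o(1)$ and in $\overline{\mathfrak{D}}$, while the leading terms are precisely the listed functions multiplied by $\rho$. Assembling over $j$, and observing that $e^{i j \theta/2}$ for $j = \pm 1$ gives the claimed $\rho\, r^{\pm 1/2} e^{\pm i\theta/2}$ and $\rho\, r^{\mp 1/2} e^{\pm i\theta/2}$ terms, while $j = 0$ gives $\rho$ and $\rho \log r$, yields the direct sum decomposition \eqref{eq:domain-adjoint-Lap}.

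The main obstacle I anticipate is making the mode-by-mode analysis rigorous as a statement about the \emph{full} function $u$ rather than a formal sum: one must control the sum over $j$, checking that the tail $\sum_{|j| \geq 2} u_j e^{i j\theta/2}$ lies in $\overline{\mathfrak{D}}$ (i.e. in $H^2$ and vanishing at $\bmp$). This requires uniform-in-$j$ estimates on the radial resolvents — the standard tool being that for $|j| \geq 2$ the radial operator with the $(j/2)^2 r^{-2}$ potential has a two-sided bound independent of $j$ on the map $f \mapsto$ (the solution vanishing at $0$), controlling it in the graph norm, so that one recovers $H^2$ summability from $\Delta_g u \in L^2$. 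This is the technical heart; it is essentially the $\RR^2$ computation referenced via \cite{AGHKH}, adapted to cone angle $4\pi$ (which only changes the admissible half-integer Fourier exponents $j/2$), so I would either cite that adaptation or carry it out as a short uniform-ellipticity argument. The verification that the listed functions actually lie in $\overline{\mathfrak{D}}{}^*$ (i.e. that $\Delta_g$ applied to each, in the distributional sense including any delta contribution at $\bmp$, is in $L^2$) is routine: $\rho$, $\rho\log r$, $\rho\, r^{\pm 1/2}e^{\pm i\theta/2}$, $\rho\, r^{-1/2}e^{\pm i\theta/2}$ are all annihilated by $\Delta_g$ near $\bmp$ up to lower-order terms supported where $\rho' \neq 0$, hence in $L^2$.
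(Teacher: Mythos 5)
Your overall architecture (angular mode decomposition, indicial analysis at $r=0$, reduction to the modes $j=0,\pm1$) is viable, but the step you use to decide what lies in $\overline{\mathfrak{D}}$ is wrong, and it is load-bearing. You assert that membership of $u_j$ in $\overline{\mathfrak{D}}$ ``corresponds to $u_j(r)\to 0$ as $r\to 0$'', citing \eqref{eq:closure-domain}; but \eqref{eq:closure-domain} reads $\overline{\mathfrak{D}}=\{u\in H^2(C_{4\pi}):u(\bmp)=0\}$, and the $H^2$ condition cannot be dropped. The function $\rho\,r^{1/2}e^{\pm i\theta/2}$ vanishes at $\bmp$ yet is \emph{not} in $\overline{\mathfrak{D}}$: its second derivatives are of size $r^{-3/2}$, which fails to be in $L^2(r\,dr)$ near $0$, so it is not even in $H^2$. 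It is precisely one of the six generators on the right-hand side of \eqref{eq:domain-adjoint-Lap} (and one of the extra generators of the Friedrichs domain in Lemma~\ref{thm:distr-at-cone-exp}). Applied literally, your criterion would absorb $\rho\,r^{1/2}e^{\pm i\theta/2}$ into $\overline{\mathfrak{D}}$, leaving only four generators and contradicting the direct-sum statement you are trying to prove. What is actually needed is: (a) for $|j|\geq 2$, the radial operator on the $j$-th mode is in the limit-point case at $r=0$ (after conjugating to $L^2(dr)$ the effective potential is $((j/2)^2-\tfrac14)r^{-2}\geq\tfrac34 r^{-2}$), so maximal and minimal domains coincide on those modes --- this is also exactly what your unproved uniform-in-$j$ tail estimate must deliver; and (b) for $j=0,\pm1$, a verification that none of the six cutoff indicial solutions lies in the graph closure of $\calC^\infty_\upc(C^\circ_{4\pi})$, e.g.\ via \eqref{eq:closure-domain} together with the failure of $H^2$ noted above, or by matching the count of six against the deficiency indices. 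The same defect affects your remainder term: ``$o(1)$ and vanishing at $0$'' does not place the variation-of-parameters term in $\overline{\mathfrak{D}}$; graph-norm control is required.

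For comparison, the paper takes a shorter route that sidesteps both issues: by the von Neumann decomposition (Reed--Simon), $\overline{\mathfrak{D}}{}^*=\overline{\mathfrak{D}}\oplus N_1\oplus N_2$, where $N_k$ is the $L^2$ nullspace of $\overline{\Delta_g}{}^*+\beta_k^2$ for two distinct $\beta_k>0$. Separation of variables turns each mode of the nullspace equation into the modified Bessel equation, and the global $L^2$ condition (exponential growth of $I_\nu$ at infinity, blow-up of $K_\nu$ at $0$ for $\nu\geq 1$) leaves only $K_0(\beta_k r)$ and $K_{1/2}(\beta_k r)e^{\pm i\theta/2}$; the small-$r$ asymptotics of $K_0$ and $K_{1/2}$ then convert these six Bessel generators into the span appearing in \eqref{eq:domain-adjoint-Lap} modulo $\overline{\mathfrak{D}}$. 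That approach needs neither a per-mode criterion for membership in $\overline{\mathfrak{D}}$ nor a tail estimate over $|j|\geq 2$. Your direct maximal-domain analysis can be repaired, but items (a) and (b) above must be supplied.
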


\begin{proof}
  Using the symmetry of $\Delta_g$, we may decompose
  $\overline{\mathfrak{D}}{}^*$ as
  \begin{equation*}
    \overline{\mathfrak{D}}{}^* = \overline{\mathfrak{D}} \oplus \Null(\Delta_g -
    \alpha_1) \oplus \Null(\Delta_g - \alpha_2)
  \end{equation*}
  for any distinct $\alpha_1$ and $\alpha_2$ lying outside the spectrum of
  $\Delta_g$ (cf.~\cite{ReeSim2}).  Moreover, nonnegativity of $\Delta_g$
  implies that it is sufficient to let $\alpha_j = - \beta_j^2$ for distinct
  choices of $\beta_j$.  Thus, let us suppose that $u$ is an element of
  $\Null(\Delta_g + \beta^2)$, i.e.,
  \begin{equation}
    \label{eq:adjlemma-1}
    \left( \Delta_g + \beta^2 \right) u(r,\theta) = - \frac{1}{r^2} \left[ r^2
      \del_r^2 + r \del_r - \left( \beta^2 r^2 - \del_\theta^2 \right) \right]
    u(r,\theta) = 0 .
  \end{equation}
  By separating variables using the spectral projectors
  \eqref{eq:angular-spectral-projectors}, we may rewrite $u$ as a Fourier series
  of the form
  \begin{equation*}
    u(r,\theta) = \frac{1}{\sqrt{4\pi}} \sum_{j \in \bbZ}
    \hat{u}_{j}(r) \exp\!\left[ i \, \frac{j}{2} \, \theta \right] .
  \end{equation*}
  Then the quality \eqref{eq:adjlemma-1} implies the corresponding equality
  \begin{equation}
    \label{eq:adjlemma-2}
    \left( L_j + \beta^2 \right) \hat{u}_j(r) \defeq - \frac{1}{r^2} \left[ r^2
      \del_r^2 + r \del_r - \left( \beta^2
        r^2 + \frac{j^2}{4} \right) \right] \hat{u}_j(r) = 0 .
  \end{equation}
  Introducing the change of variables $s = \beta r$ into \eqref{eq:adjlemma-2},
  this differential equation becomes
  \begin{equation}
    \label{eq:adjlemma-3}
    - \frac{\beta^2}{s^2} \left[ s^2 \del_s^2 + s \del_s - \left( s^2 +
        \frac{j^2}{4} \right) \right] \hat{u}_j(\beta^{-1} s) = 0 ,
  \end{equation}
  which is the modified Bessel equation, up to the overall factor of
  $- \frac{\beta^2}{s^2}$.  Thus, the Fourier coefficients $\hat{u}_j$ must be
  linear combinations of the modified bessel functions $I_{\frac{j}{2}}(s)$ and
  $K_{\frac{j}{2}}(s)$.

  Now, observe that the condition that our original function $u$ is an element
  of $L^2(C_{4\pi})$ forces each of the Fourier coefficients
  $\hat{u}_j(\beta^{-1}s)$ to be elements of the function space $L^2( (0,\infty)_s , s ds)$.
  Indeed, the Fourier decomposition in $\theta$ induces a factoring
  \begin{equation*}
    L^2(C_{4\pi}) = \ell^2\!\left( \bbZ ; L^2((0,\infty)_r , r dr) \right) ,
  \end{equation*}
  and our change of variables identifies $L^2((0,\infty)_r, r dr)$ with
  $L^2((0,\infty)_s, s ds)$.  This implies that the only admissible solutions to
  \eqref{eq:adjlemma-3} are
  \begin{equation*}
    \hat{u}_0( \beta^{-1} s) = K_0(s), \quad \hat{u}_{\pm 1}(\beta^{-1} s) =
    K_{\frac{1}{2}}(s), \quad \text{and} \quad \hat{u}_j(\beta^{-1} s) = 0
    \text{ for $|j| \geqslant 2$} .
  \end{equation*}
  These are the only modified Bessel functions which are globally in
  $L^2((0,\infty)_s,s ds)$, as may be easily gleaned from their asymptotics as
  $s \to 0$ and $s \to \infty$ in \cite{AbrSte}.  Hence,
  \begin{equation}
    \label{eq:adjlemma-4}
    \Null(\Delta_g + \beta^2) = \Span_\bbC \! \left\{ K_0(\beta r),
      K_{\frac{1}{2}}(\beta r) \exp\!\left[ \frac{i\theta}{2} \right],
      K_{\frac{1}{2}}(\beta r) \exp\!\left[ - \frac{i\theta}{2} \right] \right\} .
  \end{equation}

  Let $\rho \in \calC^\infty((0,\infty)_r)$ be a cutoff as in the statement of
  the lemma, and observe that
  \begin{equation*}
    \left[ 1 - \rho(r) \right] K_0(\beta r) \quad \text{and} \quad \left[ 1 -
      \rho(r) \right] K_{\frac{1}{2}}(\beta r)
  \end{equation*}
  are both Schwartz in $r$ and vanish at the cone point.  This shows they are
  elements of $\overline{\mathfrak{D}}$, which in turn implies that
  $\overline{\mathfrak{D}}{}^*$ is equal to
  \begin{equation*}
    \overline{\mathfrak{D}} \oplus \Span_\bbC
    \left\{ \rho \, K_0(\beta_1 r), \rho \, K_{\frac{1}{2}}(\beta_1 r)
      \exp\!\left[ \pm \frac{i\theta}{2} \right], \rho \, K_0(\beta_2 r), \rho
      \, K_{\frac{1}{2}}(\beta_2 r)
      \exp\!\left[ \pm \frac{i\theta}{2} \right] \right\}
  \end{equation*}
  for any two distinct choices of $\beta_j > 0$.  Similarly, since
  \begin{equation*}
    K_0(x) = - \log\!\left( \frac{x}{2} \right) - \gamma + \mathrm{O}(x) \text{
      as $x \to 0$}
    \quad \text{and}
    \quad K_{\frac{1}{2}}(x) = \left( \frac{\pi}{2 x} \right)^{\frac{1}{2}} e^{-x} ,
  \end{equation*}
  where $\gamma$ is the Euler-Mascheroni constant and $\Gamma(z)$ is the
  $\Gamma$-function, we have that
  \begin{multline*}
    \Span_\bbC \left\{ \rho \, K_0(\beta_1 r), \rho \, K_{\frac{1}{2}}(\beta_1
      r) \exp\!\left[ \pm \frac{i\theta}{2} \right], \rho \, K_0(\beta_2 r),
      \rho \, K_{\frac{1}{2}}(\beta_2 r)
      \exp\!\left[ \pm \frac{i\theta}{2} \right] \right\} \\
    \mbox{} \equiv \Span_\bbC \!  \left\{ \rho, \rho \log(r), \rho \,
      r^\frac{1}{2} \exp\!\left[ \pm \frac{i\theta}{2} \right], \rho \,
      r^{-\frac{1}{2}} \exp\!\left[ \pm \frac{i\theta}{2} \right] \right\}
    \mod{\overline{\mathfrak{D}}} .
  \end{multline*}
  This concludes the proof.
\end{proof}

The other piece of information about domains we needed was the expansion of
elements of the Friedrichs domain $\calD_2$ at the cone point given in
Lemma~\ref{thm:distr-at-cone-exp}.  We now prove this lemma.

\begin{proof}[Proof of Lemma~\ref{thm:distr-at-cone-exp}]
  The Friedrichs domain $\calD_2$ is characterized as the subspace of
  $\overline{\mathfrak{D}}{}^*$ which is included in the Dirichlet form domain
  associated to $\Delta_g$, i.e., those distributions $u$ which are bounded in
  \begin{equation*}
    Q_{\Delta_g}(u) = \left<u,u\right>_{L^2} +  \left< u , \Delta_g u
    \right>_{L^2} .
  \end{equation*}
  As the Dirichlet form domain is precisely $H^1(C_{4\pi})$, we may conclude
  from the description \eqref{eq:domain-adjoint-Lap} of
  $\overline{\mathfrak{D}}{}^*$ that
  \begin{equation*}
    \calD_2 = \overline{\mathfrak{D}} \oplus \Span_\bbC \! \left\{ \rho,  \rho \,
      r^\frac{1}{2} \exp\!\left[ \frac{i\theta}{2} \right],  \rho \,
      r^\frac{1}{2} \exp\!\left[- \frac{i\theta}{2} \right] \right\}
  \end{equation*}
  since these are the only elements of
  $\overline{\mathfrak{D}}{}^* \big/ \overline{\mathfrak{D}}$ which are elements
  of $H^1(C_{4\pi})$.  The lemma follows.
\end{proof}

\section{Geometric theory of diffraction}\label{sec:GTD}
In this appendix we proceed with a construction of the kernel of the wave
propagator that allows to compute explicitly the symbol on both Lagrangians
$\Lambda^\geom$ and $\Lambda^\diff$ away of their intersection. For the
diffracted part, this is known in the literature as the {\em geometric theory of
  diffraction} \cite{Keller} and we provide an interpretation of this construction based on
scattering of waves on the cone $C_\alpha$.

At the direct front, the symbol is just as it is on $\RR^2$.  Recall that, on
$\RR^2$, the half-wave kernel as a \emph{distributional half-density} is
\begin{equation}
  (2\pi)^{-2} \int e^{i((x-y) \cdot \xi - t |\xi|)} \, d\xi \big| dx
  dy\big|^{\frac{1}{2}}.
\end{equation}
Let $\vec{e}_1$ be a unit vector in the plane pointing from $x$ to $y$, and let
$(\vec{e}_1, \vec{e}_2)$ be an oriented orthonormal basis. We write
$\xi = \omega \vec{e}_1 + \rho \vec{e}_2$. Then the integral can be written
\begin{equation}
  (2\pi)^{-2} \int e^{i(|x-y| \omega - t \sqrt{\omega^2 + \rho^2})} \, d\rho \, d\omega \big| dx dy\big|^{\frac{1}{2}}.
\end{equation}
Assume $t > 0$.  Then there are stationary points on the line
$\left\{ \rho = 0, \omega > 0 \right\}$. We can integrate out $\rho$, and to
leading order (that is, replacing the expression with the leading term in the
stationary phase expansion at $\rho = 0$) we get
\begin{equation}
  (2\pi)^{-\frac{3}{2}} \int e^{i(|x-y| -t) \omega} \chi(\omega)
  e^{-\frac{i\pi}{4}} \left( \frac{\omega}{t} \right)^\frac{1}{2} \, d\omega
  \big| dx dy\big|^{\frac{1}{2}}
  \label{2dwavekernel}
\end{equation}
where $\chi \in \calC^\infty(\bbR)$ is zero for $\omega < 1$ and $1$ for
$\omega \geq 2$. Thus, the principal symbol of this distribution at
$N^* \{ |x-y| = t \}$, for $t$ fixed, is
\begin{equation}
  e^{-\frac{i\pi}{4}} \chi(\omega) \left( \frac{\omega}{t} \right)^\frac{1}{2}
  \big| dy ds d\omega \big|^{\frac{1}{2}}
\end{equation}
for $s$ the arc length along the circle $\{ |x-y| = t \}$ and $\omega$ the
cotangent variable dual to $|x-y| - t$.

We now return to $C_\alpha$, the cone of angle $\alpha$, and we restrict our
attention to $t > 0$.  On the diffracted front $\Lambda^\diff$ and away from the
direct front, the half-wave kernel $\bmU(t)$ takes the oscillatory integral form
\begin{equation}
  (2\pi)^{-\frac{3}{2}} \int e^{i(r+r'-t) \cdot \omega} \, K(r, \theta; r',
  \theta'; \omega) \left|r dr d\theta \, r' dr' d\theta' \right|^\frac{1}{2} .
  \label{diff-front}
\end{equation}

The amplitude $K(r, \theta; r', \theta'; \omega)$ is a symbol of order 0 in
$\omega$, as follows from the kernel $\bmU(t)$ being of order $-\frac{1}{2}$
(for each fixed $t$) at $\Lambda^\diff$.  We consider how this part of the
propagator acts on a particular initial condition. Consider the exact
solution\footnote{This solution is an example of the ``plane waves'' arising out
  of Cheeger's functional calculus.} to the half-wave equation given by
\begin{equation}
  \label{eq:plane-wave-solution}
  \sqrt{\frac{\pi}{2}} \Big( \int e^{-i\lambda t^*} J_{|\nu|}(\lambda r)
  \tilde\chi(\lambda) \, d\lambda  \Big) e^{i\nu \theta},
\end{equation}
where $\tilde\chi \in \calC^\infty(\bbR)$ is supported in $[2, \infty)$ and
identically $1$ for $\lambda \geq 4$, say, and where
$\nu = \frac{2\pi\ell}{\alpha}$ for some integer $\ell$. This distribution
\eqref{eq:plane-wave-solution} is conormal to $\{ r = -t \}$ for $t < 0$ and to
$\{ r = t \}$ for $t > 0$, as can be seen by using the expansion for the Bessel
function as its argument gets large:
\begin{equation}
  J_{|\nu|}(z) = \sqrt{\frac{2}{\pi z}} \cos \! \left(z - \frac{|\nu| \pi}{2} -
    \frac{\pi}{4} \right) \sum_{j=0}^\infty a_j(z)
  \label{Bessel-expansion}
\end{equation}
where $a_j \in S^{-j}(\bbR_z)$ are the homogeneous terms in the expansion in $z$
with $a_0(z) \equiv 1$.  Therefore, up to a smooth error, the solution
\eqref{eq:plane-wave-solution} has the form
\begin{equation*}
  e^{i\nu \theta} \int e^{-i \lambda t} \Big( e^{i(\lambda r - |\nu|\pi/2 -
    \pi/4)} + e^{-i(\lambda r - \nu\pi/2 - \pi/4)} \Big) a(\lambda r) (\lambda
  r)^{-\frac{1}{2}} \, d\lambda
\end{equation*}
for $a \in S^0(\bbR)$.  The singularities for $t < 0$, say $t = -t_*$, take the
form
\begin{equation}
  (2\pi)^{-\frac{1}{2}}  e^{i\nu \theta} \int e^{-i (\lambda (r - t_*) -
    |\nu|\pi/2 - \pi/4)}
  a(\lambda r) (\lambda r)^{-\frac{1}{2}} \, d\lambda,
  \label{t=-t*}\end{equation}
and for $t > 0$, say $t = +t_*$,
\begin{equation}
  (2\pi)^{-\frac{1}{2}} e^{i\nu \theta} \int e^{i (\lambda (r - t_*) -
    |\nu|\pi/2 - \pi/4)}  a(\lambda r) (\lambda r)^{-\frac{1}{2}} \, d\lambda.
  \label{t=t*}\end{equation}
On the other hand, if we apply the wave kernel $e^{-2it_*\sqrt\Delta}$ to the
initial condition \eqref{t=-t*} we obtain \eqref{t=t*} up to smooth terms. The
direct front, away from the diffracted wave, does not contribute for $t > t_*$,
and the singularities come purely from the diffracted front (except at the
intersection, where $\theta - \theta' \equiv \pm \pi \mod{\alpha}$). Away from
the direct front, applying \eqref{diff-front} to \eqref{t=-t*} gives us
\begin{multline*}
  (2\pi)^{-\frac{3}{2}} \int e^{i(r+r' - 2t_*)\omega} K(r,\theta; r', \theta';
  \omega) e^{i(-\lambda (r'-t_*) + |\nu| \pi/2 + \pi/4)} (\lambda
  r')^{-\frac{1}{2}} a(\lambda
  r')e^{i\nu \theta'} \\
  \mbox{} \times r' \, dr' \, d\theta' \, d\omega
\end{multline*}
and after applying stationary phase in the $(r', \omega)$-variables we obtain
\begin{equation*}
  (2\pi)^{-\frac{1}{2}} \int e^{i(r-t_*)\lambda} (\lambda r)^{-\frac{1}{2}}
  \chi(\lambda) \left\{
    e^{i|\nu| \pi/2} e^{i\pi/4} \int (rr')^{\frac{1}{2}} K(r, \theta; r', \theta';
    \omega) e^{i \nu \theta'} \, d\theta' \right\} d\lambda .
\end{equation*}
This must yield \eqref{t=t*} up to smooth terms. Therefore the leading-order
part of the amplitude $S$, viewed as a Schwartz kernel in the
$(\theta,\theta')$-variables, maps $e^{i\nu \theta}$ to the quantity
\begin{equation*}
  (2\pi)^{\frac{1}{2}}e^{-i\pi/2} e^{-i |\nu| \pi} (rr')^{-\frac{1}{2}} e^{i\nu
    \theta} .
\end{equation*}
Hence, the principal part of $K$ corresponds to the operator
\begin{equation*}
  -i (2\pi)^{\frac{1}{2}} (rr')^{-\frac{1}{2}} e^{-i\pi
    \sqrt{\smash[b]{\Delta_{\bbS^1_\alpha}}}},
\end{equation*}
since $e^{i\nu \theta}$ is an eigenfunction of
$\sqrt{\smash[b]{\Delta_{\bbS^1_\alpha}}}$ with eigenvalue $|\nu|$.

We can compare this principal part to the absolute scattering matrix
$S(\lambda)$ for the cone $C_\alpha$. This is, by definition, the map from the
``incoming boundary data'' of generalized eigenfunctions of
$\sqrt{\smash[b]{\Delta_{\bbS^1_\alpha}}}$ with eigenvalue $\lambda$, to the
``outgoing boundary data''. These are the coefficients of $e^{-i\lambda r}$,
respectively $e^{+i\lambda r}$, in the expansions of the generalized
eigenfunction as $r \to \infty$. By inspection of the generalized eigenfunctions
$$
J_\nu(\lambda r) e^{i \nu \theta},
$$
and \eqref{Bessel-expansion}, we see that this operator is
$-i e^{-i\pi \sqrt{\smash[b]{\Delta_{\bbS^1_\alpha}}}}$. Hence, we obtain at
leading order and provided $\theta - \theta' \not\equiv \pm \pi \mod{\alpha}$
\[
K(r,\theta,r',\theta') \sim
(2\pi)^{\frac{1}{2}}(rr')^{-\frac{1}{2}}S_\alpha(\theta-\theta')
\]
where $S_\alpha(\theta-\theta')$\footnote{We have used the invariance by
  rotation to write this kernel in this form.} is the kernel of the absolute scattering
matrix for the cone of angle $\alpha$ or, equivalently the kernel of
$-ie^{-i\pi\Delta_{\bbS^1_\alpha}}$

The principal symbol of the diffracted wave is therefore the leading-order part
of
\begin{multline*}
  K(r, \theta; r', \theta'; \omega )|dr d\theta d\theta' d\omega|^{\frac{1}{2}}
  \left| \frac{\partial (r, \theta; r', \theta'; \omega)}{\partial (x, y,
      \omega)} \right|^{-\frac{1}{2}} \\
  \equiv -i \sqrt{2\pi} (rr')^{-\frac{1}{2}} \calK\!\left[
    e^{-i\pi\sqrt{\smash[b]{\Delta_{\bbS^1_\alpha}}}} \right] (\theta, \theta')
  \cdot (rr')^{\frac{1}{2}} |dr d\theta d\theta' d\omega|^{\frac{1}{2}}
  \mod{S^{-1}} ,
\end{multline*}
which after simplification is
\begin{equation}
  \sqrt{2\pi} \, S_\alpha(\theta-\theta') |dr d\theta d\theta'
  d\omega|^{\frac{1}{2}} .
  \label{Srr'}
\end{equation}

The distribution $S_\alpha$ can be computed using Fourier series.  Indeed, since
it is the kernel of $-ie^{-i\pi \Delta_{\bbS^1_\alpha}}$ we have
\begin{equation}
  \begin{aligned}
    S_\alpha(\theta) & = \frac{-i}{\alpha}\sum_{k\in \bbZ}
    e^{-i\pi \big|\frac{2k\pi}{\alpha}\big|}e^{-\frac{2ik\pi}{\alpha}\theta}\\
    &= \frac{-i}{\alpha} \Big[ 1+\sum_{k\geq 1}
    e^{-\frac{2ik\pi}{\alpha}(\pi-\theta)}+\sum_{k\geq 1}
    e^{-\frac{2ik\pi}{\alpha}(\pi+\theta)}\Big]\\
    &=\frac{-i}{\alpha}\Big [ 1+
    \frac{e^{-\frac{2i\pi}{\alpha}(\pi-\theta)}}{1-e^{-\frac{2i\pi}{\alpha}(\pi-\theta)}}+
    \frac{e^{-\frac{2i\pi}{\alpha}(\pi+\theta)}}{1-e^{-\frac{2i\pi}{\alpha}(\pi+\theta)}}
    \Big]\\
    &=\frac{-i}{\alpha}\Big [1 +
    \frac{e^{-\frac{i\pi}{\alpha}(\pi-\theta)}}{2i\sin\big(\frac{\pi}{\alpha}(\pi-\theta)
      \big)}+
    \frac{e^{-\frac{i\pi}{\alpha}(\pi+\theta)}}{2i\sin\big(\frac{\pi}{\alpha}(\pi+\theta
      )\big)}\Big]\\
    & = \frac{-1}{2\alpha} \frac{\sin(\frac{2\pi^2}{\alpha}
      )}{\sin\big(\frac{\pi}{\alpha}(\pi-\theta)\big) \sin\big(\frac{\pi}{\alpha}(\pi+\theta)\big)}
  \end{aligned}
  \label{Salpha-expr}
\end{equation}

In the case $\alpha=4\pi$, this simplifies to
\begin{equation}\label{eq:S4pi}
  S_{4\pi}(\theta)=\frac{-1}{8\pi}\cdot \frac{1}{\sin(\frac{\pi-\theta}{4})\sin(\frac{\pi+\theta}{4})}=\frac{-1}{4\pi}\cdot
  \frac{1}{\cos \frac{\theta}{2}}.
\end{equation}

Summarizing this computation we have the following proposition.
\begin{proposition}\label{prop:hwkgtd}
  Microlocally near the diffracted front $\Lambda^\diff$ and away from $\Sigma$, the
  leading part of the half-wave kernel $\bmU_\alpha$ on the cone of angle
  $\alpha$ is given by the following oscillatory integral (using polar
  coordinates)
  \begin{equation}\label{eq:hwkgtd}
    \bmU_\alpha(t,\bmq_1,\bmq_2)\sim \frac{1}{2\pi}\int_{\omega>0}
    e^{i(r_1+r_2-t)}
    (r_1r_2)^{-\frac{1}{2}}S_\alpha(\theta_1-\theta_2)\,d\omega \big| d\bmq_1d\bmq_2\big|^{\frac{1}{2}}
  \end{equation}
  with
  \begin{equation}\label{eq:Salpha}
    S_\alpha(\theta)=\frac{-1}{2\alpha}
    \frac{\sin(\frac{2\pi^2}{\alpha} )}{\sin\big(\frac{\pi}{\alpha}(\pi-\theta)\big)\sin\big(\frac{\pi}{\alpha}(\pi+\theta)\big)}
  \end{equation}
\end{proposition}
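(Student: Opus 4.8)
The plan is to combine the already-established Lagrangian structure of $\bmU_\alpha$ near $\Lambda^\diff$ with a scattering computation that identifies the leading symbol, and then to evaluate that symbol via a Fourier series.

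First I would record that, for each fixed $t$, the half-wave kernel $\bmU_\alpha$ is, microlocally near $\Lambda^\diff$ and away from $\Sigma$, a Lagrangian distribution associated to $\Lambda^\diff$; this follows from Theorem~\ref{thm:structure-sine-prop} and Corollary~\ref{cor:UisMU} (and also, for the wavefront set and the structure away from $\Sigma$, from the Cheeger--Taylor analysis on metric cones). Accordingly it admits an oscillatory integral representation $(2\pi)^{-3/2}\int_{\omega>0} e^{i(r_1+r_2-t)\omega}\,K(r_1,\theta_1;r_2,\theta_2;\omega)\,|r_1\,dr_1\,d\theta_1\,r_2\,dr_2\,d\theta_2|^{1/2}\,d\omega$ in which $K$ is a classical symbol of order $0$ in $\omega$; what remains is to identify the principal part $K_0$ and to evaluate $S_\alpha$ in closed form.

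I would determine $K_0$ by a probing argument. Using Cheeger's functional calculus on the cone, construct the exact ``plane wave'' solutions $\sqrt{\pi/2}\,\big(\int e^{-i\lambda t}J_{|\nu|}(\lambda r)\tilde\chi(\lambda)\,d\lambda\big)e^{i\nu\theta}$ with $\nu = 2\pi\ell/\alpha$, $\ell\in\bbZ$. By the large-argument expansion of the Bessel function these are conormal to the sphere $\{r = |t|\}$ --- an incoming wave for $t<0$ and an outgoing wave for $t>0$ --- with leading amplitude carrying the explicit phase $e^{\mp i(|\nu|\pi/2+\pi/4)}$. Since $\calU(2t_*)$ maps the incoming data at time $-t_*$ to the outgoing data at time $+t_*$, and since for $t>t_*$, away from $\Sigma$, the only singular contribution comes from the diffracted front, one applies the oscillatory integral above to the incoming data, evaluates the resulting integral in $(r',\omega)$ by stationary phase, and matches the outcome against the outgoing data. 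This forces, modulo a symbol one order lower, $K(r,\theta;r',\theta';\omega)\sim \sqrt{2\pi}\,(rr')^{-1/2}\,\calK\!\big[-i\,e^{-i\pi\sqrt{\Delta_{\bbS^1_\alpha}}}\big](\theta,\theta')$; inspecting the generalized eigenfunctions $J_\nu(\lambda r)e^{i\nu\theta}$ identifies $-i\,e^{-i\pi\sqrt{\Delta_{\bbS^1_\alpha}}}$ as precisely the absolute scattering matrix $S_\alpha$ of $C_\alpha$. Reassembling the half-density and tracking the powers of $2\pi$ then yields the normalisation $\tfrac1{2\pi}(r_1r_2)^{-1/2}S_\alpha(\theta_1-\theta_2)$ of \eqref{eq:hwkgtd}.

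Finally I would compute $S_\alpha$ explicitly. Being the Schwartz kernel of $-i\,e^{-i\pi\sqrt{\Delta_{\bbS^1_\alpha}}}$ on $\bbR/\alpha\bbZ$, diagonalised by $e^{-2ik\pi\theta/\alpha}$ with eigenvalue $|2k\pi/\alpha|$, it equals $\tfrac{-i}{\alpha}\sum_{k\in\bbZ} e^{-i\pi|2k\pi/\alpha|}\,e^{-2ik\pi\theta/\alpha}$; separating the $k=0$ term, summing the two geometric series over $k\ge 1$ and $k\le -1$, and simplifying the exponentials yields the closed form \eqref{eq:Salpha}. (The series converges only distributionally, and blows up exactly on $\{\theta\equiv\pm\pi\bmod\alpha\}$, the projection of $\Sigma$, which is why the proposition is restricted to the complement of $\Sigma$.) The step I expect to be the main obstacle is the matching in the probing argument: one must track the stationary-phase factors $e^{\pm i\pi/4}$, the Bessel-asymptotic phases $e^{-i|\nu|\pi/2-i\pi/4}$, and the powers of $2\pi$ precisely enough to pin down the constant $-i\sqrt{2\pi}$ and, above all, to recognise the angular operator as exactly $e^{-i\pi\sqrt{\Delta_{\bbS^1_\alpha}}}$; the order count, the half-density bookkeeping, and the Fourier summation are routine by comparison.
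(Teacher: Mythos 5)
Your proposal is correct and follows essentially the same route as the paper's Appendix B: the oscillatory-integral ansatz with an order-zero symbol $K$ at $\Lambda^\diff$, the probing argument with Cheeger's plane-wave solutions and Bessel asymptotics matched via stationary phase in $(r',\omega)$, the identification of the angular operator $-i e^{-i\pi\sqrt{\smash[b]{\Delta_{\bbS^1_\alpha}}}}$ with the absolute scattering matrix, and the Fourier-series summation giving \eqref{eq:Salpha}. No gaps to report.
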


\begin{remark}
  This coincides with Theorem 4 in \cite{Hil} up to the factor $2$ that as been
  omitted there.
\end{remark}

\section*{References}
\label{sec:references}

\begin{biblist}

  \bib{AbrSte}{book}{ author={Abramowitz, Milton}, author={Stegun, Irene A.},
    title={Handbook of mathematical functions with formulas, graphs, and
      mathematical tables}, series={National Bureau of Standards Applied
      Mathematics Series}, volume={55}, publisher={For sale by the
      Superintendent of Documents, U.S. Government Printing Office, Washington,
      D.C.}, date={1964}, pages={xiv+1046}, }

  \bib{AGHKH}{book}{ author={Albeverio, S.}, author={Gesztesy, F.},
    author={H{\o}egh-Krohn, R.}, author={Holden, H.}, title={Solvable models in
      quantum mechanics}, edition={2}, note={With an appendix by Pavel Exner},
    publisher={AMS Chelsea Publishing, Providence, RI}, date={2005},
    pages={xiv+488}, }

  \bib{BogoPavSch}{article}{ author={Bogomolny, E.}, author={Pavloff, N.},
    author={Schmit, C.}, title={Diffractive corrections in the trace formula for
      polygonal billiards}, journal={Phys. Rev. E}, volume={61}, date={2000},
    number={4}, pages={3689--3711}, }

  \bib{CheTay1}{article}{ author={Cheeger, Jeff}, author={Taylor, Michael},
    title={On the diffraction of waves by conical singularities. I},
    journal={Comm. Pure Appl. Math.}, volume={35}, date={1982}, number={3},
    pages={275--331}, }

  \bib{CheTay2}{article}{ author={Cheeger, Jeff}, author={Taylor, Michael},
    title={On the diffraction of waves by conical singularities. II},
    journal={Comm. Pure Appl. Math.}, volume={35}, date={1982}, number={4},
    pages={487--529}, }

  \bib{deHUhlVas}{article}{ author={de Hoop, Maarten}, author={Uhlmann,
      Gunther}, author={Vasy, Andr\'as}, title={Diffraction from conormal
      singularities}, eprint={arXiv:1204.0842}, }

  \bib{DuiGui}{article}{ author={Duistermaat, J. J.}, author={Guillemin, V.},
    title={The spectrum of positive elliptic operators and periodic geodesics.},
    journal={Invent. Math.}, volume={29}, date={1975}, pages={29--79}, }

  \bib{DuiHor}{article}{ author={Duistermaat, J. J.}, author={H{\"o}rmander,
      L.}, title={Fourier integral operators. II}, journal={Acta Math.},
    volume={128}, date={1972}, number={3-4}, pages={183--269}, }

  \bib{Fri}{article}{ author={Friedlander, F. G.}, title={Multivalued solutions
      of the wave equation}, journal={Math. Proc. Cambridge Philos. Soc.},
    volume={90}, date={1981}, number={2}, pages={335--341}, }

  \bib{ForWun}{article}{ author={Ford, Austin}, author={Wunsch, Jared},
    title={The diffractive wave trace on manifold with conic singularities.},
    eprint={arXiv:1411.6913}, }

  \bib{GuiUhl}{article}{ author={Guillemin, Victor}, author={Uhlmann, Gunther},
    title={Oscillatory integrals with singular symbols}, journal={Duke
      Math. J.}, volume={48}, date={1981}, number={1}, pages={251--267}, }

  \bib{Hil}{article}{ author={Hillairet, Luc}, title={Contribution of periodic
      diffractive geodesics}, journal={J. Funct. Anal.}, volume={226},
    date={2005}, number={1}, pages={48--89}, }

  \bib{Hil2}{article}{ author={Hillairet, Luc}, title={Diffractive geodesics of
      a polygonal billiard.}, journal={Proc. Edinb. Math. Soc. (2)},
    volume={49}, date={2006}, number={1}, pages={71--86}, }
  \bib{HorFIO}{article}{ author={H{\"o}rmander, Lars}, title={Fourier integral
      operators. I}, journal={Acta Math.}, volume={127}, date={1971},
    number={1-2}, pages={79--183}, }

  \bib{Hor0}{article}{ author={H{\"o}rmander, Lars}, title={The spectral
      function of an elliptic operator}, journal={Acta Math.}, volume={121},
    date={1968}, pages={193--218}, }

  \bib{Hor1}{book}{ author={H{\"o}rmander, Lars}, title={The analysis of linear
      partial differential operators. I}, series={Classics in Mathematics},
    note={Distribution theory and Fourier analysis; Reprint of the second (1990)
      edition [Springer, Berlin]}, publisher={Springer-Verlag}, place={Berlin},
    date={2003}, pages={x+440}, }

  \bib{Hor4}{book}{ author={H{\"o}rmander, Lars}, title={The analysis of linear
      partial differential operators. IV}, series={Classics in Mathematics},
    note={Fourier integral operators; Reprint of the 1994 edition},
    publisher={Springer-Verlag}, place={Berlin}, date={2009}, pages={viii+352},
  }

  \bib{Jos}{article}{ author={Joshi, Mark~S.}, title={A symbolic construction of
      the forward fundamental solution of the wave operator},
    journal={Comm. Partial Differential Equations}, volume={23}, date={1998},
    number={7-8}, pages={1349--1417}, }

    \bib{Keller}{article}{
   author={Keller, Joseph B.},
   title={A geometrical theory of diffraction},
   conference={
      title={Calculus of variations and its applications. Proceedings of
      Symposia in Applied Mathematics, Vol. 8},
   },
   book={
      publisher={For the American Mathematical Society: McGraw-Hill Book Co.,
   Inc., New York-Toronto-London},
   },
   date={1958},
   pages={27--52},
   }

  \bib{MelAPS}{book}{ author={Melrose, Richard B.}, title={The
      Atiyah-Patodi-Singer index theorem}, series={Research Notes in
      Mathematics}, volume={4}, publisher={A K Peters, Ltd., Wellesley, MA},
    date={1993}, pages={xiv+377}, }

  \bib{MelUhl}{article}{ author={Melrose, Richard~B.}, author={Uhlmann,
      Gunther~A.}, title={Lagrangian intersection and the Cauchy problem},
    journal={Comm. Pure Appl. Math.}, volume={32}, date={1979}, number={4},
    pages={483--519}, }

  \bib{MelWun}{article}{ author={Melrose, Richard}, author={Wunsch, Jared},
    title={Propagation of singularities for the wave equation on conic
      manifolds}, journal={Invent. Math.}, volume={156}, date={2004},
    number={2}, pages={235--299}, }

  \bib{ReeSim2}{book}{ author={Reed, Michael}, author={Simon, Barry},
    title={Methods of modern mathematical physics. II. Fourier analysis,
      self-adjointness}, publisher={Academic Press [Harcourt Brace Jovanovich,
      Publishers], New York-London}, date={1975}, pages={xv+361}, }

  \bib{Som}{article}{ author={Sommerfeld, A.}, title={Mathematische Theorie der
      Diffraction}, journal={Math. Ann.}, volume={47}, date={1896},
    number={2-3}, pages={317--374}, issn={0025-5831}, }

  \bib{Wun}{article}{ author={Wunsch, Jared}, title={A Poisson relation for
      conic manifolds}, journal={Math. Res. Lett.}, volume={9}, date={2002},
    number={5-6}, pages={813--828}, }

\end{biblist}

\end{document}